\documentclass[10pt]{amsart}
\usepackage{times,amsmath,amsbsy,amssymb,amscd,mathrsfs}
\usepackage{slashbox}\usepackage{graphicx, caption, subcaption, epstopdf,wrapfig,chemarrow}

\usepackage{algorithm2e} 
\usepackage{multicol,multirow}
\usepackage{mathtools}
\usepackage[usenames,dvipsnames,svgnames,table]{xcolor}
\usepackage[all]{xy}
\usepackage{wrapfig}
\usepackage{tcolorbox}
\usepackage{stmaryrd}

\usepackage{tikz,tikz-cd}
\usepackage[utf8]{inputenc}
\usepackage{pgfplots} 
\usepackage{pgfgantt}
\usepackage{pdflscape}
\pgfplotsset{compat=newest} 
\pgfplotsset{plot coordinates/math parser=false}
\newlength\fwidth

\usepackage[numbered]{mcode}

\definecolor{myBlue}{rgb}{0.0,0.0,0.55}
\usepackage[pdftex,colorlinks=true,citecolor=myBlue,linkcolor=myBlue]{hyperref}

\usepackage[hyperpageref]{backref}

\usepackage{comment,enumerate,multicol,xspace}

  \newcounter{mnote}
  \setcounter{mnote}{0}
  
  \let\oldmarginpar\marginpar
    \renewcommand\marginpar[1]{\-\oldmarginpar[\raggedleft\footnotesize #1]%
    {\raggedright\footnotesize #1}}


%
%
%
%
%



\newtheorem{theorem}{Theorem}[section]
\newtheorem{lemma}[theorem]{Lemma}
\newtheorem{corollary}[theorem]{Corollary}
\newtheorem{proposition}[theorem]{Proposition}

\newtheorem{remark}[theorem]{Remark}

\newcommand{\mc}{\mcode}

\newcommand{\vertiii}[1]{{\left\vert\kern-0.25ex\left\vert\kern-0.25ex\left\vert #1 
    \right\vert\kern-0.25ex\right\vert\kern-0.25ex\right\vert}}

\usepackage{slashbox}
\usepackage{booktabs}
\usepackage[foot]{amsaddr}

\newcommand{\IV}{\mathcal{I}_{\mathcal V}}
\newcommand{\IQ}{\mathcal{I}_{\mathcal Q}}
\newcommand{\IU}{T_{\mathcal U}}
\newcommand{\IP}{T_{\mathcal P}}
\newcommand{\mV}{\mathcal V}
\newcommand{\mQ}{\mathcal Q}
\newcommand{\mU}{\mathcal U}
\newcommand{\mP}{\mathcal P}

\newcommand{\correction}{\textcolor{black}}

\title[Transformed Primal-Dual Methods]{Transformed Primal-Dual Methods\\
 for Nonlinear Saddle Point Systems}
\author{Long Chen $^{1,\ast}$}
\address{$^{\ast}$Corresponding author}
\address{$^1$Department of Mathematics, University of California at Irvine, Irvine, CA 92697, USA}%
\email{chenlong@math.uci.edu}
\author{Jingrong Wei$^1$}
\email{jingronw@uci.edu}%


\begin{document}
\begin{abstract} 
A transformed primal-dual (TPD) flow is developed for a class of nonlinear smooth saddle point system. The flow for the dual variable contains a Schur complement which is strongly convex. Exponential stability of the saddle point is obtained by showing the strong Lyapunov property. Several TPD iterations are derived by implicit Euler, explicit Euler, implicit-explicit and \correction {Gauss-Seidel methods with accelerated overrelaxation} of the TPD flow. Generalized to the symmetric TPD iterations, linear convergence rate is preserved for convex-concave saddle point systems under assumptions that the regularized functions are strongly convex. The effectiveness of augmented Lagrangian methods can be explained as a regularization of the non-strongly convexity and a preconditioning for the Schur complement. The algorithm and convergence analysis depends crucially on appropriate inner products of the spaces for the primal variable and dual variable.  A clear convergence analysis with nonlinear inexact inner solvers is also developed. 
 \end{abstract}

\maketitle


\smallskip
\noindent \textbf{Keywords.} Saddle point system, primal-dual iteration, augmented Lagrangian method, inexact solver.

\section*{Statements and Declarations}
The authors were supported by NSF DMS-1913080 and DMS-2012465.

\section*{Acknowledgement}
We would like to thank Dr. Jianchao Bai, Dr. Ruchi Guo, and Dr. Solmaz Kia for valuable suggestions, especially the discussion on the augmented Lagrangian methods. We also thank Dr. Hao Luo for careful proof reading \correction{and discussion on the Gauss-Seidel method with accelerated overrelaxation}.

\newpage


\section{Introduction}
\subsection{Problem setting}
Consider a class of nonlinear \correction{smooth} saddle point systems:
\begin{equation}\label{eq: min-max problem}
    \min_{u\in \mathbb{R}^m} \max_{p \in \mathbb{R}^n} \mathcal{L}(u,p) = f(u) - g(p) + (Bu,p)
\end{equation}
where $B$ is an $n\times m$ matrix, $n \leq m$, with full row rank, $f(u), g(p)$ are smooth convex functions with convexity constant $\mu_f, \mu_g$, and $\nabla f(u), \nabla g(p)$ are Lipschitz continuous with Lipschitz constants $L_f, L_g$, respectively. The point $(u^*, p^*)$ solves the min-max problem~\eqref{eq: min-max problem} is said to be a saddle point of $\mathcal{L}(u,p)$, that is
$$\mathcal{L}(u^*, p) \leq \mathcal{L}(u^*, p^*) \leq  \mathcal{L}(u, p^*)\quad \forall \ (u,p)\in \mathbb R^m \times \mathbb R^n.$$ 
Convex optimization problems with affine equality constraints can be rewritten into a saddle point system~\eqref{eq: min-max problem}: 
\begin{equation}\label{eq: one-block affine equality constrained optimization system}
    \begin{aligned}
    \min_{u\in \mathbb{R}^m} f(u)\\
    \text{subject to} \quad Bu = b.
    \end{aligned}
\end{equation}
Then $p$ is the Lagrange multiplier to impose the constraint $Bu = b$ and $\mathcal{L} (u,p) = f(u) - (b,p) \correction{+ (Bu, p)}$. Note that $\mu_g = 0$ since $g(p)= (b, p)$ is linear and not strongly convex.

\correction{The saddle point $(u^*, p^*)$ satisfies the first order necessary condition for the critical point of $\mathcal{L}(u, p)$:
\begin{equation}\label{eq:critical point system}
    \begin{aligned}
    \nabla f(u^*) + B^{\top}p = 0, \\
    Bu^* - \nabla g(p^*) = 0.
    \end{aligned}
\end{equation}
If $\nabla f(u)=Au$ and $\nabla g(p)=Cp$, where $A, C$ are symmetric positive semidefinite matrices, one can recover the linear saddle point system:
\begin{equation}
 \left(\begin{array}{cc}
A & B^{\top} \\
B & -C
\end{array}\right)\left(\begin{array}{l}
u^* \\
p^*
\end{array}\right)=\left(\begin{array}{l}
f \\
g
\end{array}\right),   
\end{equation}
which arises in computational fluid dynamics~\cite{boon2022robust}, mixed finite element approximation of PDEs~\cite{chen2018multigrid, chen2015some,  huang2018multigrid}, optimal control problems~\cite{zeng2022dynamical}, etc (see~\cite{benzi2005numerical} and references therein).
}

\correction{For solving~\eqref{eq:critical point system}, the Arrow--Hurwicz and Uzawa methods proposed
in~\cite{arrow1958studies} is one of the earliest and most fundamental method. The pioneer work inspired influential algorithms such as the extragradient algorithm~\cite{korpelevich1976extragradient}, the Popov's modified method~\cite{popov1980modification} (also known as optimistic gradient descent-ascent methods). For strongly convex-strongly concave systems, i.e., $\mu_f> 0$ and $\mu_g> 0$, linear convergence of the extragradient algorithm was established in~\cite{korpelevich1976extragradient}. For general convex-concave systems only sub-linear rates are achieved in~\cite{golowich2020last, mokhtari2020convergence,tran2020non,yoon2021accelerated}.}

One may ask a question immediately: can we retain linear convergence rate only with partially strong convexity, i.e., $\mu_f> 0$ but $\mu_g = 0$, which covers the most important \correction{constrained optimization problem}~\eqref{eq: one-block affine equality constrained optimization system}? The answer is yes. When $f$ is strongly convex, its convex conjugate exists, i.e., $f^*(\xi) = \max_{u \in \mathbb{R}^m}  \ (\xi, u)- f(u)$ is well defined and convex. Then~\eqref{eq: min-max problem} is equivalent to the composite optimization problem without constraints:
\begin{equation}\label{eq: dual problem}
    \begin{aligned}
    \min_{p\in \mathbb{R}^n} f^*(-B^\top p) + g(p).
    \end{aligned}
\end{equation}
Notice $f^*$ is strongly convex since $\nabla f$ is Lipschitz continuous \correction{and $B$ is full row rank},~\eqref{eq: dual problem} is a strongly convex optimization problem with respect to the dual variable $p$. If $f^*$ and $\nabla f^*$ is \correction{computationally} available, convex optimization methods can be applied to solve~\eqref{eq: dual problem} and obtain linear convergence with strong convexity of $f^*$. Inexact Uzawa methods (IUM) \correction{for linear saddle point systems~\cite{ bacuta2006unified, bacuta2009schur, bank1989class, bramble2000uzawa, cheng2000nonlinear, elman1994inexact, peters2005fast,  queck1989convergence} and nonlinear saddle point systems
\cite{chen2015some, chen2017solving, chen1998global,chen1998preconditioned, hu2006nonlinear}} can be thought of as an inexact evaluation of $\nabla f^*$ for solving~\eqref{eq: dual problem} and achieving linear convergence rate. Usually a nonlinear inner iteration terminated with a certain accuracy for computing $\nabla f^*$ is required \correction{\cite{bacuta2006unified,bacuta2009schur, chen1998global,cheng2000nonlinear, Hu.Q;Zou.J2002,hu2006nonlinear,peters2005fast, song2019inexact}}. 


%
%




\subsection{Flows}
\correction{We shall study the iterative methods from the ODE solvers point of view. Namely we treat $(u(t), p(t))$ as continuous functions of $t$ and design ODE systems so that the saddle point $(u^*, p^*)$ is an equilibrium point of the corresponding dynamic system. Then we apply ODE solvers to obtain various iterative methods. By doing this way, we can borrow the analysis tools for dynamic systems to prove the stability and convergence theory of ODE solvers.}

The main stream \correction{in this direction} is the primal-dual gradient dynamics, which treat $u$ as the primal variable and $p$ as the dual variable and follows the primal-dual (PD) flow~\cite{arrow1958studies}: 
\begin{equation}\label{eq: primal-dual flow}
\left\{\begin{aligned}
u' &= -\partial_u \mathcal{L}(u,p) = -\nabla f(u) - B^\top p \\
p' &= \partial_p \mathcal{L}(u,p)  = Bu -\nabla g(p) 
\end{aligned}\right. ,
\end{equation} 
where $u', p'$ are taking the derivative of $t$. \correction{The exponential stability of the equilibrium point $(u^*, p^*)$ is shown in~\cite{qu2018exponential} for problem \eqref{eq: one-block affine equality constrained optimization system} and asymptotic convergence for general convex-concave systems can be found in~\cite{cherukuri2017saddle} and references therein. Then ODE solvers for \eqref{eq: primal-dual flow} will lead to several iterative methods and the linear convergence may be obtained using the exponential stability in the continuous level.}


For linear saddle point problems, we have the following factorization:
\begin{equation}\label{eq: block factorization}
\begin{pmatrix}
A & B^{\top} \\
B & -C
\end{pmatrix}=\begin{pmatrix}
I & 0 \\
BA^{-1} & I
\end{pmatrix}\begin{pmatrix}
A & 0 \\
0 & -S
\end{pmatrix}\begin{pmatrix}
I & A^{-1} B^{\top} \\
0 & I
\end{pmatrix},
\end{equation}
where $A\in \mathbb{R}^{m\times m}$ is symmetric positive definite (SPD), $B\in \correction{\mathbb{R}^{n\times m}}$ is surjective, $C\in \mathbb{R}^{n\times n}$ is symmetric and semi-positive definite, and $S=B A^{-1} B^{\top}+C$ is the Schur complement of $A$. 
The triangular matrix in~\eqref{eq: block factorization}
can be viewed as a change of coordinate. By changing to the correct `coordinate', the primal and dual variables are decoupled and the Schur complement $S$ defines a strongly convex function of the dual variable; see~\eqref{eq: dual problem}. 

Generalized to nonlinear systems, we consider a change of variable $ v = u + \IV^{-1} B^\top p$ where $\IV$ is an SPD matrix. 
Based on this transformation, we propose the following transformed primal-dual (TPD) flow
\begin{equation}\label{eq: TPD}
\left \{ \begin{aligned}
u' &=   -\IV^{-1}\partial_u \mathcal{L}(u,p) = \correction{-\IV^{-1} (\nabla f(u) + B^\top p )}\\
p' &= \IQ^{-1}\left ( \partial_p \mathcal{L}(u,p)-B\IV^{-1}\partial_u \mathcal{L}(u,p) \right ) = \correction{- \IQ^{-1}\left [  \nabla g_B(p) - B u +B\IV^{-1} \nabla f(u)   \right ]},
\end{aligned} \right.
\end{equation}
where $\IQ$ is another SPD matrix and $g_B(p):=g(p) + \frac{1}{2}\correction{p^{\top}}B\IV^{-1}B^{\top}p$. 
Here following~\cite{Bramble;Pasciak;Vassilev:1997Analysis} and~\cite{Zulehner2011}, the TPD flow is posed in appropriate inner products induced by SPD matrices $\IV$ and $\IQ$ on $\mathbb{R}^m$ and $\mathbb{R}^n$, respectively. After the transformation, \correction{the gradient of} the Schur complement $B\IV^{-1}B^{\top} p$ is added to $\nabla g(p)$. Even $\mu_g = 0$, the function $g_B$  is strongly convex and thus the exponential stability for the TPD flow can be established. More precisely, 
if $(u(t), p(t))$ solves the TPD flow~\eqref{eq: TPD}, we shall prove the exponential decay 
\begin{equation}\label{eq: Intro exponential decay}
\mathcal{E}(u(t),p(t)) \leq e^{-\mu t}\mathcal{E}(u(0),p(0)), \quad t>0,
\end{equation}
where the Lyapunov function 
\begin{equation}\label{eq: Intro Lyapunov}
\mathcal{E}(u,p) = \frac{1}{2} \|u-u^*\|^2_{\mathcal{I}_\mV} + \frac{1}{2} \|p-p^*\|^2_{\mathcal{I}_\mQ},
\end{equation}
\correction{and $\mu = \min \{\mu_{f,\IV}, (2-L_{f, \IV})\mu_{g_{B}, \IQ})\}$ with assumption $L_{f, \IV}<2$ which can be satisfied by rescaling}.

In Fig. \ref{fig:PDvsTPD}, we present numerical results for the example $\mathcal L(u,p) = \frac{1}{2}u^2 - up$ with $u,p\in \mathbb R$. It is evident that the TPD flow \correction{is asymptotically stable and the Lyapunov function \eqref{eq: Intro Lyapunov} converges without oscillations}.  

\begin{figure}[htbp]

\begin{subfigure}[t]{0.45\textwidth}
\centering
\includegraphics[width=\textwidth]{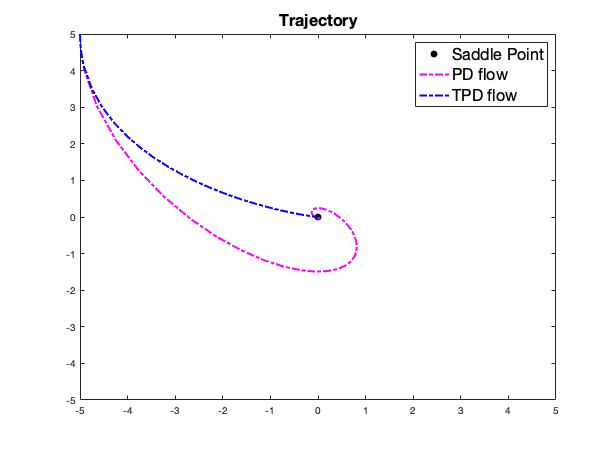}
\caption{Trajectory of PD and TPD flows in the $(u,p)$ coordinate.}
\end{subfigure}
\hfill
\begin{subfigure}[t]{0.45\textwidth}
\centering
\includegraphics[width=\textwidth]{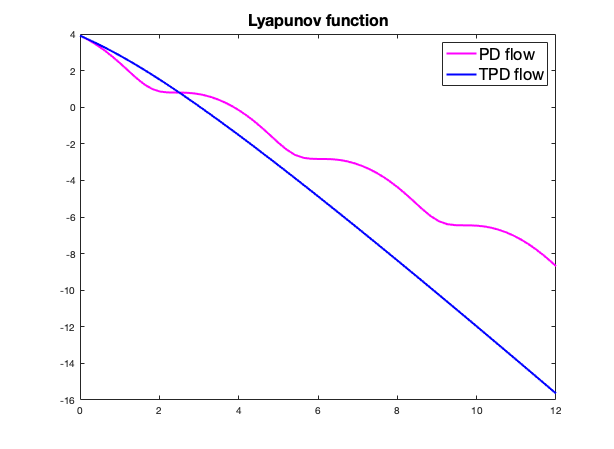}
\caption{Decay of Lyapunov function~\eqref{eq: Intro Lyapunov}.}
\end{subfigure}

\caption{Comparison of PD flow $\begin{pmatrix}
 u'\\
 p'
\end{pmatrix}
= 
\begin{pmatrix}
 - 1 & -1\\
 1 & 0
\end{pmatrix}
\begin{pmatrix}
 u\\
 p
\end{pmatrix}
$ and TPD flow $\begin{pmatrix}
 u'\\
 p'
\end{pmatrix}
= 
\begin{pmatrix}
 - 1 & -1\\
 0 & -1
\end{pmatrix}
\begin{pmatrix}
 u\\
 p
\end{pmatrix}
$ for $\mathcal L(u,p) = \frac{1}{2}u^2 - up$. 
The ODE systems are solved by \mc{ode45} in MATLAB.}
\label{fig:PDvsTPD}
\end{figure}

%
%
On convergence analysis, for linear saddle point systems, it suffices to bound the spectrum of a matrix operator for the error; see~\cite{notay2019convergence, zulehner2002analysis} and reference therein. For nonlinear problems, if the spectrum analysis is applied to the linearization problem, then it is limited to the local convergence, i.e., $(u_k, p_k)$ should be sufficiently close to $(u^*, p^*)$; see, e.g.~\cite{hu2006nonlinear}.

To overcome the limitation of the spectrum analysis, we shall follow the framework in~\cite{chen2021unified} to verify the strong Lyapunov property in Theorem \ref{Continuous strong Lyapunov for transformed gradient flow}
$$
 -\nabla \mathcal{E}(u,p) \cdot \mathcal{G}(u,p) \geq \mu \, \mathcal{E}(u,p),
$$
where $\mathcal{G}(u,p)$ is the vector field defined on the right hand side of~\eqref{eq: TPD}. Then the exponential decay~\eqref{eq: Intro exponential decay} follows. Convergence analysis relies crucially on the assumption that the Lipschitz constant $L_{f,\IV} < 2$ which can be always satisfied by a rescaling. 


One can further ask the question: can we still have the linear convergence rate if not only $\mu_g =0$ but also $\mu_f = 0$? Recall that, the strong convexity of the dual variable is recovered by the transformation on the dual variable flow. We can apply the transformation to the primal variable as well. If $f$ is not strongly convex, but $f_B(u) = f(u) + \frac{1}{2}(B^{\top}\IP^{-1}Bu, u)$ is strongly convex, we show the exponential stability can be obtained by the symmetric transformed primal-dual (STPD) flow:
\begin{equation}\label{eq: Intro STPD}
\left \{ \begin{aligned}
u' &=   -\IV^{-1}(\partial_u \mathcal{L}(u,p)  + B^{\top}\IP^{-1}\partial_p \mathcal{L}(u,p))\\
p' &= \IQ^{-1}\left ( \partial_p \mathcal{L}(u,p)-B\IU^{-1}\partial_u \mathcal{L}(u,p) \right ) 
\end{aligned} \right. .
\end{equation}
Here we further introduce SPD matrices $\IU, \IP$ for the transformation and treat $\IV$ and $\IQ$ as preconditioners. 

With appropriate scaling of $\IU$ and $\IP$, we can assume Lipschitz constants $L_{f, \IU} < 2$ and  $L_{g, \IP} < 2$. Then define the effective convexity constant $\mu = \min \{\mu_{\mV}, \mu_{\mQ}\}$ with
$$
\mu_{\mV} = \min \{1, 2 - L_{f, \IU}\} \mu_{f_B, \IV}, \quad \mu_{\mQ} = \min \{1, 2 - L_{g, \IP}\} \mu_{g_B, \IQ},
$$
in Theorem \ref{thm: strong Lyapunov property for symmetric TPD flow}, we show the exponential decay
$$\mathcal{E}(u(t),p(t)) \leq e^{-\mu t}\mathcal{E}(u(0),p(0)), \quad \forall t > 0,$$
for $(u(t), p(t))$ solves the STPD flow~\eqref{eq: Intro STPD}. 
%

Consider the convex optimization problems with affine equality constraints~\eqref{eq: one-block affine equality constrained optimization system}, the well-known augmented Lagrangian method (ALM)~\cite{hestenes1969multiplier, powell1969method} for solving
\begin{equation}\label{eq: Intro AL}
   \min_{u \in \mathbb{R}^m} \max_{p \in \mathbb{R}^n} \mathcal{L}_\beta (u,p) = f(u) + \frac{\beta}{2} \|Bu-b\|^2 + (p, Bu-b)
\end{equation} 
can be derived from STPD flow~\eqref{eq: Intro STPD} by choosing $\IP^{-1}=\beta I$. From this point of view, the effectivness of ALM can be interpreted by the STPD flows in the continuous level. Notice we can also consider TPD flow for the augmented Lagrangian~\eqref{eq: Intro AL}  which is more or less equivalent to STPD~\eqref{eq: Intro STPD} for the original Lagrangian. We show careful analysis to explain the connection between TPD flows and ALM in Section \ref{sec:ALM}.

To illustrate different flows for constrained optimization problems~\eqref{eq: one-block affine equality constrained optimization system}, we present numerical results in Fig. \ref{fig:PDvsTPD_ALM} for the example 
\begin{equation}\label{eq: example: one-block affine equality constrained optimization system}
    \begin{aligned}
    \min_{(u_1, u_2)\in \mathbb{R}^2} f(u_1, u_2) = \frac{1}{2}u_1^2 - u_2\\
    \text{subject to} \quad u_1 - u_2 = 0.
    \end{aligned}
\end{equation}
with $u = (u_1, u_2)\in \mathbb R^2, p\in \mathbb R$. The convex function $f$ is not strongly convex but restricted to $\ker B = \{ (u_1, u_2)\in \mathbb R^2: u_1= u_2\}$ is or equivalently $f_B(u_1, u_2) = \frac{1}{2} u_1^2 + \frac{1}{2}(u_1-u_2)^2 - u_2$ is strongly convex. 
Compared with applying the PD flow to Lagrangian (PD flow) or augmented Lagrangian (AL-PD flow), \correction{the STPD flow approached the saddle point with no oscillation and dramatic decay of the Lyapunov function \eqref{eq: Intro Lyapunov}}. 

\begin{figure}[htbp]
\begin{subfigure}[t]{0.45\textwidth}
\centering
\includegraphics*[width=\textwidth]{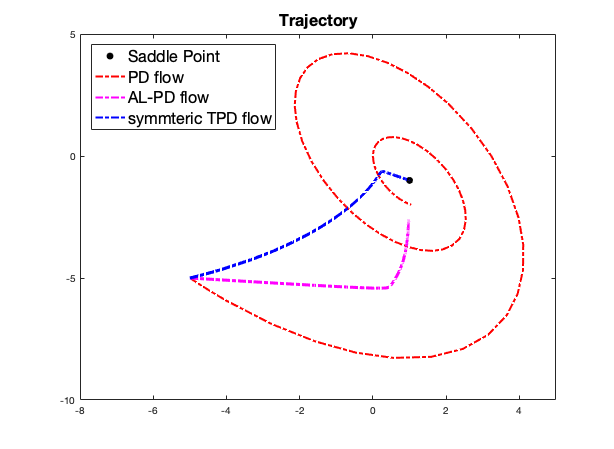}
\caption{Trajectories of PD, AL-PD and STPD flows in $(u_1, p)$ coordinate.}
\end{subfigure}
\hfill
\begin{subfigure}[t]{0.45\textwidth}
\centering
\includegraphics*[width=\textwidth]{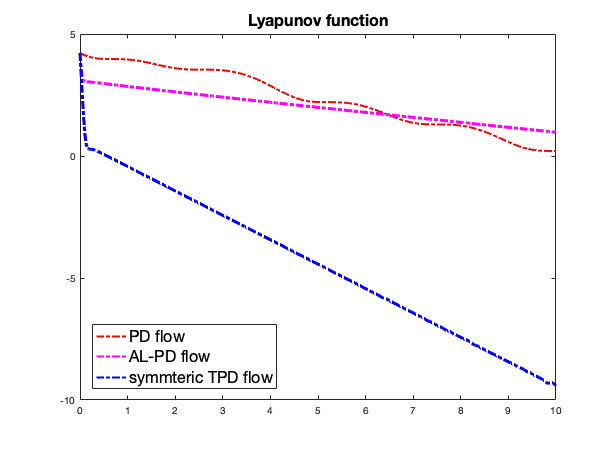}
\caption{Decay of the Lyapunov function~\eqref{eq: Intro Lyapunov}. }
\end{subfigure}

\caption{Comparison of PD flow, AL-PD flow and STPD flow for the example~\eqref{eq: example: one-block affine equality constrained optimization system}. In STPD, $\IU = \IV = I$ and  $\IP^{-1} = \IQ^{-1} = \beta I$ with $\beta = 10$. The ODE systems are solved by \mc{ode45} in MATLAB.}
\label{fig:PDvsTPD_ALM}
\end{figure}


\subsection{Schemes}
In the discrete level, we apply implicit Euler, explicit Euler, implicit-explicit (IMEX) methods, \correction{and a Gauss-Seidel iteration with accelerated overrelaxation (AOR)~\cite{hadjidimos1978accelerated}} to the TPD flow~\eqref{eq: TPD} to obtain several iterative methods. 

Implicit Euler method with growing step size and efficient Newton type inner iteration~\cite{Li;Sun;Toh:2018highly} will yield super-linear convergence rate. 
On the explicit Euler method, an equivalent algorithm  is:  
\begin{equation}
   \begin{aligned}\label{eq: Intro EE TPD}
u_{k+1/2} &= u_k - \IV^{-1}(\nabla f(u_k) + B^\top p_{k}),\\ 
p_{k+1}  &= p_k - \alpha_k \IQ^{-1}\left(\nabla g(p_k) - B u_{k+1/2} \right),\\
u_{k+1} &= (1-\alpha_k)u_k + \alpha_k u_{k+1/2},
\end{aligned} 
\end{equation}
which can be viewed as a relaxation of the inexact Uzawa methods (IUM) and recovers IUM when $\alpha_k=1$. The term $u_{k+1/2}$ is introduced for computing $B\IV^{-1}\partial_u \mathcal{L}(u_k,p_k)$ in~\eqref{eq: TPD}. In other words, TPD flow can be viewed as a continuous version of IUM by dividing $\alpha_k$ and letting $\alpha_k \to 0$ in~\eqref{eq: Intro EE TPD}. 


When the step size $\alpha_k$ is sufficiently small, in Theorem \ref{convergence rate for EE discretization}, we prove that
%
$$\mathcal{E}(u_{k+1}, p_{k+1}) \leq (1-\frac{1}{4\kappa^2})\mathcal{E}(u_k, p_k),$$
with $\kappa \geq \max\{ \kappa_{\mV}, \kappa_{\mQ}\}, \kappa_{\mV} := L_{\mV}/\mu_{\mV}, \kappa_{\mQ} := L_{\mQ}/\mu_{\mQ}$. We refer to Table \ref{table:constants} for the precise definition of these constants and comment on the rate briefly here. 

Roughly speaking, the rate of convergence is determined by $\kappa_{\mV}(f):= L_{f, \IV}/\mu_{f, \IV}$ and  $\kappa_{\mQ}(S) = \kappa(\IQ^{-1}B\IV^{-1}B^\top):= \lambda_{\max}\left (\IQ^{-1}B\IV^{-1}B^\top \right)/\lambda_{\min}\left (\IQ^{-1}B\IV^{-1}B^\top \right)$. Both $\IV$ and $\IQ$ can be scalar, then~\eqref{eq: Intro EE TPD} is an explicit first order method with linear convergence rate. However, in this case, when either $\kappa(f)$ or $\kappa(BB^{\top})$ is large, the convergence will be very slow. When $\IV^{-1} = 1/L_f I$, we can choose $\IQ^{-1} = L_f (BB^{\top})^{-1}$ to improve $\kappa_{\mQ}$ and the rate becomes $1-c/\kappa^2(f)$.

To further accelerate the linear rate $1-c/\kappa^2$, we consider the IMEX scheme for TPD flow~\eqref{eq: TPD}. Equivalently we replace the third step in~\eqref{eq: Intro EE TPD} by
\begin{equation}\label{eq:intro subproblem}
u_{k+1} = \arg \min_{u \in \mathbb{R}^m} f(u) + \frac{1}{2\alpha_k} \|u - u_k + \alpha_k\IV^{-1}B^\top p_{k+1}\|^2_{\IV}. 
\end{equation}
When $\IV =L_f I$,~\eqref{eq:intro subproblem} is one proximal iteration
$$
u_{k+1} = {\rm prox}_{ f, \frac{\alpha_k}{L_f}}  (u_k - \frac{\alpha_k}{L_f} B^\top p_{k+1}),
$$
where recall that ${\rm prox}_{f, \lambda}(w) = \arg \min_{u} f(u) + \frac{1}{2\lambda}\|u-w\|^2$. Namely IMEX for~\eqref{eq: TPD} is equivalent to one inexact Uzawa iteration plus one proximal iteration. 
The linear convergence rate can be improved to (Theorem \ref{convergence rate for IMEX discretization}), 
\begin{equation}\label{intro:IMEX}
\mathcal{E}(u_{k+1}, p_{k+1}) \leq \frac{1}{1 + c/\kappa_{\mV}}\mathcal{E}(u_k, p_k),
\end{equation}
provided we can choose $\IQ$ such that $\kappa_{\mQ}(S) \ll \kappa_{\mV}$. 
We can choose an inner product $\IV$ so that $\kappa_{\mV}(f)$ small. But in the above schemes a prior information on the spectrum of the Schur complement $B\IV^{-1}B^\top$ is required to design $\IQ$ in order to control $\kappa_{\mQ}(S)$. Noted that when $\IV^{-1} = A^{-1}$ is a dense matrix, even the Schur complement $B\IV^{-1}B^\top$ is expensive to compute and store. When the proximal operator of $f$ is available, we recommend $\IV =L_f I$ and $\IQ^{-1} \approx L_f(BB^{\top})^{-1}$ so that~\eqref{intro:IMEX} can be achieved. \correction{In particular, $\IV = rI$ and $\IQ =\frac{1}{r}BB^{\top} +\delta I$ is the scheme discussed in~\cite{he2021balanced} and a sub-linear rate of $1/k$ is given for (non-smooth) constrained problems there.}
 
\correction{When the proximal operator of $f$ is not available, we propose a new Gauss-Seidel iteration with accelerated overrelaxation (GS-AOR) for the TPD flow:
\begin{equation}\label{eq:introG-S TPD}
\begin{aligned}
  \frac{u_{k+1} - u_k}{\alpha}&= -\IV^{-1}(\nabla f(u_{k}) + B^{\intercal}p_{k}) \\
\frac{p_{k+1} - p_k}{\alpha} &=-\IQ^{-1}\left [ \nabla g_B(p_{k})- B(2u_{k+1} - u_k) + B\IV^{-1}\nabla f(u_{k+1}) \right ].     
\end{aligned}
\end{equation}
This is an explicit scheme due to the update of $u_{k+1}$ before the update of $p_{k+1}$. The term $Bu$ in \eqref{eq: TPD} is approximated by $B(2u_{k+1} - u_k)$. With a modified Lyapunov function 
\begin{equation*}
    \mathcal{E}(x_k) = \frac{1}{2}\|x_k-x^{
    \star}\|^2_{\mathcal M_{\mathcal X}- 2\alpha\mathcal B}  -\alpha D_f(u^*, u_k) - \alpha D_{g_B}(p^*, p_{k}),
\end{equation*}
where  $x = (u, p)$, $ \mathcal M_{\mathcal X} = \text{diag}\{\IV, \IQ\}$ and $\mathcal B = \begin{pmatrix}
       0&  B^{\top}\\
       B& 0
\end{pmatrix}$ is a symmetric matrix, and the Bregman divergence of $f$ and $g_B$ are
\begin{equation*}
\begin{aligned}
    D_f(u,v) &= f(u) - f(v) - \langle \nabla f(v), u-v\rangle, \\
    D_{g_B}(p,q) &= g_B(p) - g_B(q) - \langle \nabla g_B(q), p-q\rangle,
\end{aligned}
\end{equation*}
we proved in Theorem \ref{thm: convergence of G-S TPD} that
\begin{equation*}
\begin{aligned}
\mathcal{E}(x_{k+1})\leq&~ \frac{1}{1+\mu \alpha/2}\mathcal{E}(x_k) \leq \frac{1}{1+c\kappa}  \mathcal{E}(x_k),
\end{aligned}
\end{equation*}
where $\mu = \min\left \{\mu_{\mV},\mu_{\mQ} \right \}$ and a fixed step size $\alpha_k = \alpha < 1/\max \{ 4L_S, 2L_{f,\IV}, 2L_{g_B, \IQ} \}$ with the constants defined in Table \ref{table:constants}. In particular, for the constrained optimization problem \eqref{eq: one-block affine equality constrained optimization system}, with a large enough $\IQ$ such that $L_S \leq 1$, constant step size $\alpha = 1/8$ is allowed.
}



\correction{We can combine the transformed primal-dual itertion with the augmented Lagrangian methods. As we mentioned before, $f$ may not be strongly convex but 
\begin{equation*}
    f_\beta(u) = f(u) + \frac{\beta}{2}\|Bu-b\|^2
\end{equation*}
is $\mu_{f_\beta}$-strongly convex. That is, $f$ is strongly convex restricted on $\operatorname{ker} B = \{u\in \mathbb R^m: Bu = 0\}$.} By choosing an appropriate SPD matrix $A$, the condition number of $f$ can be modified to $\kappa_A(f) = L_{f,A}/\mu_{f,A}$. For $\IV = A_\beta = A + \beta BB^\top$, a simple $\IQ^{-1}= \beta I$ is allowed as preconditioning of the Schur complement. \correction{We propose the ALM-GS-AOR scheme
\begin{equation*}
\left \{\begin{aligned}
  \frac{u_{k+1} - u_k}{\alpha}= &-\IV^{-1}(\nabla f(u_{k}) + \beta B^{\top}(Bu_k-b) + B^{\top}p_{k}) \\
\frac{p_{k+1} - p_k}{\alpha} =&-\beta \left [ B\IV^{-1}B^{\top}p_k+b  - B(2u_{k+1} - u_k)\right.\\
&\left. + B\IV^{-1}\left (\nabla f(u_{k+1}) + \beta B^{\top}(Bu_{k+1}-b)\right ) \right ].
\end{aligned} \right.
\end{equation*}}
We show in Proposition \ref{preconditioned Schur complement} that 
$$
\kappa_{\mQ}(S) = \kappa( \IQ^{-1}B\IV^{-1}B^\top) \leq 1 + \frac{1}{\beta \mu_{S_0}},
$$
where $\mu_{S_0} = \lambda_{\min} (BA^{-1}B^{\top})$. So for $\beta$ large enough, e.g., $\beta \geq 1/ \mu_{S_0}$, $\kappa_{\mQ}(S)$ is bounded by $2$. \correction{Then with constant step size $\alpha =1/8$, we get the linear rate \begin{equation*}
\begin{aligned}
\mathcal{E}(x_{k+1})\leq&~ \frac{1}{1+\mu_{f_\beta, A_\beta}/16}\mathcal{E}(x_k) \leq \frac{1}{1+c\, \kappa_{A_\beta}(f_\beta)}  \mathcal{E}(x_k).
\end{aligned}
\end{equation*}
}

The choice $\IQ^{-1} = \beta I_n$ is simple but now $\IV^{-1} \approx (A + \beta BB^\top)^{-1}$ becomes harder to approximate. \correction{General preconditioners $\IV$ and $\IQ$ can be chosen and analyzed under the framework of transformed primal-dual methods, which extends the choice of augmented term parameter is usually a scalar in ALM literatures~\cite{bertsekas2014constrained, powell1978algorithms}.} An optimal choice of parameter $\beta$ and inner product $\IV$ and $\IQ$ will be problem dependent. 
We summarize some typical choices of $\IV$ and $\IQ$ for explicit Euler, IMEX, \correction{and GS-AOR} schemes with or without ALM in Table \ref{table:ALM examples}. 

\subsection{Contribution}
To summarize, our main contribution of this work includes:
\begin{itemize}
    \item We propose a novel transformed primal-dual flow and \correction{prove  the saddle point $(u^*, p^*)$ is exponentially stable by showing} the exponential decay of a strong Lyapunov function. We show the symmetrized version can recover the well-known ALM.  
    
    \item In the discrete level, we develop several transformed primal-dual iterations by applying implicit Euler, explicit Euler, implicit-explicit Euler, and \correction {GS-AOR} methods of the TPD flow. All the schemes achieve the linear convergence rates with mild assumptions, even neither $f$ nor $g$ is strongly convex. \correction{In particular, GS-AOR is an explicit scheme achieving the state-of-the-art linear convergence rate.}
        
    
    \item Instead of solving a subproblem at each iteration accurately, we can relax to general linear inexact solvers $\IV^{-1}$ and $\IQ^{-1}$. We also derive convergence analysis with nonlinear inexact inner solvers for sub-problem~\eqref{eq:intro subproblem}. Compared with existing works, our framework using the strong Lyapunov property provides flexibility and much clear analysis to choose inexact inner solvers. 
\end{itemize}

The rest of paper is organized as follows. In Section 2 we describe problem settings and review Lyapunov analysis used as tools for convergence analysis. Our motivation to use change of variable to recover strong convexity in dual variable is also highlighted in this section. In Section 3, the transformed primal-dual flow on the continuous level is developed and convergence analysis is given. Variants of discrete schemes as transformed primal-dual iterations are discussed in Section 4 and we further generalize our framework to inexact solvers. A \correction{symmetric} transformed primal-dual flow for non-strongly convex $f$ and $g$ is proposed and analyzed in Section 5. In Section 6, we showed our algorithms can be adapted to augmented Lagrangian to solve constrained optimization problems.

\section{Preliminaries}
In this section, we provide background on convex functions and Lyapunov analysis. We also show the loss of exponential stability for the primal-dual flow and recover it by a change of variable.
 
\subsection{Convex Functions}
Let $\mV$ be a finite-dimensional Hilbert space with inner product $(\cdot, \cdot)$ and norm $\|\cdot \|$. $\mV^{\prime}$ is the linear space of all linear and continuous mappings $T: \mV \rightarrow \mathbb{R}$, which is called the dual space of $\mV$, and $\langle \cdot, \cdot \rangle$ denotes the duality pair between $\mV$ and $\mV^{\prime}$. For any proper closed convex function $f: \mV \rightarrow \mathbb{R}$ , we say $f \in \correction{\mathcal{S}_{\mu}}$ with $\mu \geqslant 0$ if $f$ is differentiable and  
$$
f\left(v\right)-f\left(u\right)-\left\langle \nabla f(u), v-u\right\rangle \geqslant \frac{\mu}{2}\left\|u-v\right\|^{2}, \quad \forall u, v \in \mV.
$$
 In addition, denote $f \in \correction{\mathcal{S}_{\mu, L}}$ if $f \in \mathcal{S}_{\mu}$ and there exists $L>0$ such that
$$
f\left(v\right)-f\left(u\right)-\left\langle\nabla f\left(u\right), v-u\right\rangle \leqslant \frac{L}{2}\left\|u-v\right\|^{2}, \quad \forall u, v \in \mV.
$$
The Bregman divergence of $f$ is defined as
$$D_{f}(v, u) : = f(v)-f(u)-\langle \nabla f(u), v-u\rangle.$$
For fixed $u \in \mV, D_{f}(\cdot, u)$ is convex as $f$ is convex. If 
$f \in \mathcal{S}_{\mu, L} $, we have
$$\frac{\mu}{2}\|u-v\|^2 \leq D_{f}(v, u) \leq \frac{L}{2}\|u-v\|^2.$$
Especially for $f(u)=\frac{1}{2}\|u\|^{2}$, Bregman divergence reduces to the half of the squared distance $D_{f}(v, u)=D_{f}(u, v)=$ $\frac{1}{2}\|u-v\|^{2}$. In general $D_f(v, u)$ is non-symmetric in terms of $u$ and $v$.
A symmetrized Bregman \correction{divergence} is defined as
\begin{equation*}\label{eq: symmetrized Bregman divergenve}
    \langle\nabla f(u)-\nabla f(v), u-v\rangle=D_{f}(v, u)+D_{f}(u, v).
\end{equation*}
By direct calculation, we have the following three-terms identity.
\begin{lemma}[Bregman divergence identity~\cite{chen1993convergence}]
If \correction{$f: \mV \rightarrow \mathbb{R}$ is differentiable, then for any $u, v, w \in \mV$}, it holds that
\begin{equation}\label{eq: Bregman divergence identity}
   \langle\nabla f(u)-\nabla f(v), v-w\rangle=D_{f}(w, u)-D_{f}(w, v)-D_{f}(v, u). 
\end{equation}
\end{lemma}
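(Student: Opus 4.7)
The plan is to prove this identity by direct expansion of the three Bregman divergence terms on the right-hand side according to the definition $D_f(v,u) = f(v) - f(u) - \langle \nabla f(u), v-u\rangle$ and showing that all function-value terms cancel while the gradient terms regroup into the desired inner product. Since $f \in C^1$ is the only regularity assumption, no additional machinery is needed; the statement is purely algebraic.

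Concretely, I would first write out
\begin{align*}
D_f(w,u) &= f(w) - f(u) - \langle \nabla f(u), w-u\rangle,\\
D_f(w,v) &= f(w) - f(v) - \langle \nabla f(v), w-v\rangle,\\
D_f(v,u) &= f(v) - f(u) - \langle \nabla f(u), v-u\rangle,
\end{align*}
and then form the combination $D_f(w,u) - D_f(w,v) - D_f(v,u)$. The $f(w)$ terms cancel between the first two lines, and the $f(v)$ and $f(u)$ terms cancel between the second and third lines, leaving only the three inner products
$$
-\langle \nabla f(u), w-u\rangle + \langle \nabla f(v), w-v\rangle + \langle \nabla f(u), v-u\rangle.
$$

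The final step is to regroup the gradient terms. Combining the two $\nabla f(u)$ contributions gives $\langle \nabla f(u), (v-u)-(w-u)\rangle = \langle \nabla f(u), v-w\rangle$, and the $\nabla f(v)$ term is $\langle \nabla f(v), w-v\rangle = -\langle \nabla f(v), v-w\rangle$. Adding these produces $\langle \nabla f(u)-\nabla f(v), v-w\rangle$, which matches the left-hand side.

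There is no genuine obstacle here: the only thing to watch is bookkeeping of signs, since the three Bregman terms enter with different signs and the gradient is always evaluated at the second argument of $D_f$. The payoff is that this identity, together with the convexity estimate $D_f(v,u)\ge \tfrac{\mu}{2}\|u-v\|^2$ stated just above, will be the workhorse for rewriting cross terms $\langle \nabla f(u)-\nabla f(v), v-w\rangle$ in the later strong Lyapunov computations for the TPD flow.
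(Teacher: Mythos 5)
Your proof is correct and is exactly the ``direct calculation'' the paper appeals to for this identity: expand the three Bregman divergences, cancel the function values, and regroup the gradient terms, with the sign bookkeeping done correctly. Nothing further is needed.
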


When $f(u) = \frac{1}{2}\|u\|^2$, identity~\eqref{eq: Bregman divergence identity} becomes 
\begin{equation*}
 (u-v, v-w) =  \frac{1}{2}\|w-u\|^2 - \frac{1}{2}\|w-v\|^2 - \frac{1}{2}\|v-u\|^2.
\end{equation*}

\subsection{Lyapunov analysis}
In order to study the stability of an equilibrium $x^*$ of a dynamical system defined by an autonomous system
\begin{equation}\label{autonomous system}
    x' = \mathcal{G}(x(t)),
\end{equation}
Lyapunov introduced the so-called Lyapunov function $\mathcal{E}(x)$~\cite{Khalil:1173048, Haddad2008}, which is nonnegative and the equilibrium point $x^*$ satisfies $\mathcal{E}\left(x^{*}\right)=0$ and the Lyapunov condition:
$-\nabla \mathcal{E}(x) \cdot \mathcal{G}(x)$ is locally positive near the equilibrium point $x^{*}$. 
That is the flow $\mathcal G(x)$ may not be in the perfect $- \nabla \mathcal{E}(x)$ direction but contains positive component in that direction.
Then the (local) decay property of $\mathcal{E}(x)$ along the trajectory $x(t)$ of the autonomous system~\eqref{autonomous system} can be derived immediately
$$
\frac{\mathrm{d}}{\mathrm{d} t} \mathcal{E}(x(t))=\nabla \mathcal{E}(x) \cdot x^{\prime}(t)=\nabla \mathcal{E}(x) \cdot \mathcal{G}(x)<0.
$$
To further establish the convergence rate of $\mathcal{E}(x(t))$, Chen and Luo~\cite{chen2021unified} introduced the strong Lyapunov condition: $\mathcal{E}(x)$ is a Lyapunov function and there exist constant $q \geqslant 1$, strictly positive function $c(x)$ and function $p(x)$ such that
\begin{equation}\label{strong Lyapunov property}
- \nabla \mathcal{E}(x) \cdot \mathcal{G}(x) \geq c(x) \mathcal{E}^{q}(x)+ p^{2}(x)  
\end{equation}
holds true near $x^{*}$. From this, one can derive the exponential decay $\mathcal{E}(x(t))=O\left(e^{-c t}\right)$ for $q=1$ and the algebraic decay $\mathcal{E}(x(t))=O\left(t^{- 1 /(q-1)}\right)$ for $q>1$. Furthermore if $\|x - x^*\|^2 \leq C \mathcal E(x)$, then we can derive the exponential stability of $x^*$ from the exponential decay of Lyapunov function $\mathcal E(x)$. 

Note that for an optimization problem, we have freedom to design the vector field $\mathcal G(x)$ and choose Lyapunov function $\mathcal E(x)$. \correction{Throughout this paper, zeros denote zero numbers or zero vectors that is clear from the context. For example, $\mathcal {G} (x^*) =0$ means a vector zero and $\mathcal{E}(x^*) =0$ means a scalar zero for an equilibrium point $x^{\star}$.}

\subsection{Primal-dual flow}
One of the simplest Lyapunov function for the saddle point system~\eqref{eq: min-max problem} is:
\begin{equation}\label{eq:Eup}
    \begin{aligned}
    \mathcal{E}(u, p) = & \frac{1}{2}\|u-u^*\|^2 +   \frac{1}{2}\|p-p^*\|^2.
    \end{aligned}
\end{equation}
\correction{The asymptotic convergence
properties of the PD flow is discussed in~\cite{cherukuri2017saddle}. We state in the following Lemma  that $\mathcal{E}$ is a Lyapunov function but may not satisfy the  strong Lyapunov property when $g$ is not strongly convex.} 

\begin{lemma}\label{lem: Lyapunov property for primal-dual flow}
Assume $f(u) \in \mathcal{S}_{\mu_f, L_f} $ and  $g(p) \in \mathcal{S}_{\mu_g, L_g}$ with $\mu_f > 0$, $\mu_g \geq 0$. Then it holds that
$$
-\nabla \mathcal{E}(u,p)\cdot 
\begin{pmatrix}
 - \partial_u\mathcal L(u,p)\\
 \partial_p \mathcal L(u,p)
\end{pmatrix}
\geq \mu_f\|u-u^*\|^2 + \mu_g \|p-p^*\|^2 \correction{\geq 0},$$
for $\mathcal E(u,p)$ defined in~\eqref{eq:Eup}.
\end{lemma}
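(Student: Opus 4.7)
The plan is to expand the inner product directly and use the optimality conditions at the saddle point to remove the bilinear coupling through $B$, after which strong convexity gives the bound.

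First I would compute $\nabla \mathcal{E}(u,p) = (u-u^*,\, p-p^*)$ and write out the dot product with the PD vector field as
\[
-\nabla \mathcal{E}(u,p)\cdot \begin{pmatrix}-\partial_u \mathcal{L}(u,p) \\ \partial_p \mathcal{L}(u,p)\end{pmatrix} = (u-u^*,\,\nabla f(u)+B^\top p) - (p-p^*,\,Bu-\nabla h(p)).
\]
The next step is to use the saddle point optimality conditions $\nabla f(u^*)+B^\top p^*=0$ and $Bu^*=\nabla h(p^*)$ to rewrite the right-hand side as
\[
(u-u^*,\,\nabla f(u)-\nabla f(u^*)) + (u-u^*,\,B^\top(p-p^*)) - (p-p^*,\,B(u-u^*)) + (p-p^*,\,\nabla h(p)-\nabla h(p^*)).
\]

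The key observation is then that the two bilinear terms involving $B$ cancel exactly, since $(u-u^*,\,B^\top(p-p^*)) = (B(u-u^*),\,p-p^*)$. This cancellation is the whole point: it is precisely why the PD flow preserves $\mathcal{E}$ under the skew-symmetric coupling but does not produce any additional decay from the coupling. After the cancellation, only the two symmetrized Bregman-type terms remain.

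Finally, applying the strong convexity hypotheses $f\in \mathcal{S}^{1,1}_{\mu_f,L_f}$ and $h\in \mathcal{S}^{1,1}_{\mu_h,L_h}$ in the form $(\nabla f(u)-\nabla f(u^*),\,u-u^*)\geq \mu_f\|u-u^*\|^2$ and similarly for $h$ immediately gives the claimed inequality. I do not expect any real obstacle here; the proof is essentially a one-line computation once the cross terms are identified as canceling. The lemma is included to highlight that when $\mu_h=0$ the lower bound degenerates in the $p$-direction, motivating the transformed flow where the Schur complement restores strong convexity in the dual variable.
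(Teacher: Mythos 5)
Your proposal is correct and follows essentially the same route as the paper's proof: insert the saddle point optimality conditions (equivalently $\nabla\mathcal{L}(u^*,p^*)=0$), observe the skew-symmetric cross terms through $B$ cancel, and bound the remaining symmetrized Bregman terms below via the convexity constants $\mu_f$ and $\mu_h$. No gaps.
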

\begin{proof}
As $\nabla \mathcal{L}(u^*,p^*) = 0$, we can insert $\nabla \mathcal{L}(u^*,p^*)$ and obtain
\begin{equation*}
 \begin{aligned}
- \nabla \mathcal{E}(u,p)\cdot 
\begin{pmatrix}
 - \partial_u\mathcal L(u,p)\\
 \partial_p \mathcal L(u,p)
\end{pmatrix} ={}& \langle \partial_u \mathcal{E}(u, p), \partial_u \mathcal{L}(u,p) - \partial_u \mathcal{L}(u^*,p^*) \rangle \\
   &+ \langle \partial_p \mathcal{E}(u, p), -\partial_p \mathcal{L}(u,p) + \partial_p \mathcal{L}(u^*,p^*) \rangle \\
    = {}&\langle u-u^*, \nabla f(u) - \nabla f(u^*) \rangle + \langle p-p^*, \nabla g(p) - \nabla g(p^*) \rangle \\ 
    \geq {}& \mu_f\|u-u^*\|^2 + \mu_g \|p-p^*\|^2.
    \end{aligned}
\end{equation*}
\end{proof}

By sign change of $ \partial_u \mathcal{L}(u,p)$ and $ \partial_p \mathcal{L}(u,p)$, the cross terms $\left \langle u-u^*, B^\top(p-p^*) \right \rangle$ and $\left \langle p-p^*, -B(u-u^*) \right \rangle$ are canceled. The symmetrized Bregman divergence of $f$ can be bounded below by $\|u-u^*\|^2 $ by the strong convexity of $f(u)$. However, that of $g$ cannot be controlled by $\|p-p^*\|^2$ if $\mu_g = 0$, which is the loss of the strong convexity on the dual variable. One cannot achieve the exponential decay for Lyapunov function~\eqref{eq:Eup} by using the primal-dual flow, and this is the essential reason for the sub-linear convergence rate for many numerical schemes; see the literature review in the introduction. 

In the continuous level, a compensation is to introduce a rescaled primal-dual flow \correction{and design a tailored Lyapunov function such that the exponential decay can be verified under certain metric~\cite{chen2021unified, qu2018exponential}}.
In the discrete level, however, corresponding explicit schemes can only converge sub-linearly~\cite{luo2022primal}. The linear rate can be retained if the scheme is implicit in $p$~\cite{luo2021accelerated, luo2022primal} for which a linear saddle point system should be solved in each step. 
Recovery the strong Lyapunov property through the time rescaling in the dual variable is thus expensive.

\subsection{Recovery \correction{of} strong convexity through transformation}

In view of~\eqref{eq: dual problem}, when $f^*$ is known, the flow for the dual variable can be the gradient flow of the strong convex function of the dual variable~\cite{huang2013accelerated, yin2010analysis}. In general, we consider a change of variable
\begin{equation}\label{new variable}
    v = u + \IV^{-1} B^\top p.
\end{equation}
After transformation, the optimization problem can be formulated in terms of $(v, p)$, i.e., $ 
\mathcal{L}(v,p):= \mathcal{L}(u(v,p),p).$ Such idea has been successfully applied to the linear saddle point systems in~\cite{benzi2006augmented, chen2017convergence}.  
The primal-dual flow for $(v,p)$ is
 \begin{equation}\label{primal-dual flow in (v,p)}
 \left\{\begin{aligned}
 v' &=  -\partial_v \mathcal{L}(v,p) = -\partial_u \mathcal{L}(u,p),\\
 p'  &= \partial_p \mathcal{L}(v,p) = \partial_p \mathcal{L}(u,p)-B\mathcal{I}_{\mV}^{-1}\partial_u \mathcal{L}(u,p),
 \end{aligned}\right.
 \end{equation}
which can be rewritten as the iteration of $(u,p,v)$ variable
\begin{equation*}
\left\{\begin{aligned}
v ' &=  -v + e(u),\\
p'  &=  - \nabla g_B(p) + B e(u),
\end{aligned}\right.
\end{equation*}
where $e(u) = u - \IV^{-1}\nabla f(u)$ and $g_B(p) = g(p)  + \frac{1}{2}(B\IV^{-1}B^\top p, p).$ 
If $f(u) = \frac{1}{2}\|u\|_A^2$ is quadratic and $\IV = A$, the term $e(u)$ vanishes, then $v'=-v$ and $p'= - \nabla g_B(p)$ is decoupled for which the exponential decay can be easily obtained. 

In general, we can show if $e(u)$ is a contraction, the strong Lyapunov property can be established for the primal-dual flow~\eqref{primal-dual flow in (v,p)} for variable $(v,p)$. In Section \ref{sec: transformed primal-dual flow}, we shall present a simplified flow for the original variable $(u,p)$. 

\subsection{Inner products} 
When $\mV = \mathbb R^m, \mQ= \mathbb R^n$, the standard $l^{2}$ dot product of Euclidean space is usually chosen as the inner product and the norm induced is the Euclidean norm. We now introduce inner product $(\cdot, \cdot)_{\IV}$ induced by a given SPD operator $\IV: \mV\to \mV$ defined as follows
$$
(u, v)_{\IV}:=(\IV u, v) = (u, \IV v), \quad \forall u, v \in \mV
$$ and associated norm $\|\cdot\|_{\IV}$, given by
$$
\|u\|_{\IV}= (u, u)^{1/2}_{\IV}.
$$
The dual norm w.r.t the $\IV$-norm is defined as: for $\ell \in \mV^{\prime}$
$$
\|\ell\|_{\mV^{\prime}}=\sup _{0 \neq u \in \mV} \frac{\langle\ell, u\rangle}{\|u\|_{\IV}}.
$$
It is straightforward to verify that
$$
\|\ell\|_{\mV^{\prime}}=\|\ell\|_{\IV^{-1}}:=\left( \ell, \ell\right )_{ \IV^{-1}}^{1/ 2} := \left( \IV^{-1} \ell, \ell\right )^{1/ 2}.
$$

We shall generalize the convexity and Lipschitz continuity with respect to $\IV$-norm: we say $f \in \mathcal{S}_{\mu_{f, \IV}}$ with $\mu_{f, \IV} \geqslant 0$ if $f$ is differentiable and
$$
f\left( v \right)-f\left(u\right)-\left\langle \nabla f(u), v-u\right\rangle \geqslant \frac{\mu_{f, \IV}}{2}\left\|u-v\right\|^{2}_{\IV}, \quad \forall u, v  \in \mV.
$$
In addition, denote $f \in \mathcal{S}_{\mu_{f, \IV}, L_{f, \IV}}$ if $f \in \mathcal{S}_{\mu_{f, \IV}}$ and there exists $L_{f, \IV}>0$ such that
$$
f\left( v \right)-f\left(u\right)-\left\langle \nabla f(u), v-u\right\rangle \leq \frac{L_{f, \IV}}{2}\left\|u-v\right\|^{2}_{\IV}, \quad \forall u, v  \in \mV.
$$
Under this definition, the default norm is a special case with $\IV = I$ for which the subscript will be skipped, i.e., $\mu_f, L_f$ for $\|\cdot \|$. 
Similarly we introduce inner product $(\cdot, \cdot)_{\IQ}$ induced by a given self-adjoint and positive definite operator $\IQ$ and the notation follows on $\mQ$. The convexity and Lipschitz constant of $g$ w.r.t to $\| \cdot \|_{\IQ}$ will be denoted by $\mu_{g, \IQ}$ and $L_{g, \IQ}$. 

\subsection{Gradient descent step for the primary variable}\label{sec: eu}
For $f \in \mathcal{S}_{\mu_{f,\IV}L_{f,\IV}}
$, function
\begin{equation}\label{eq:eu}
    \begin{aligned}
    e(u) &= u - \IV^{-1}\nabla f(u)
    \end{aligned}
\end{equation}
can be thought of as one gradient descent step at $u$ in the metric $\IV$. 
By the triangle inequality, $e(u)$ is always Lipschitz continuous with respect to $\IV$-norm. 
%
Denote by $L_{e, \IV}$ the Lipschitz constant of $e(u)$, i.e.,  $L_{e, \IV} > 0$ such that
\begin{equation*}
    \begin{aligned}
    \|e(u_1) - e(u_2)\|_{\IV} \leq L_{e, \IV} \|u_1-u_2\|_{\IV},\quad \forall u_1,u_2 \in \mV.  
    \end{aligned}
\end{equation*}
When $L_{e, \IV} < 1$, $e(u)$ is a contractive map. We derive a sufficient and necessary condition for $e(u)$ being contractive in the following lemma.

\begin{lemma}\label{lem: Equivalence between Le and Lf} Suppose $f \in \mathcal{S}_{\mu_{f,\IV}L_{f,\IV}}$. Then $L_{e, \IV} < 1$ if and only if $0<L_{f, \IV} < 2$. 
\end{lemma}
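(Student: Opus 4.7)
\smallskip
\noindent\textbf{Proof plan.}
The approach hinges on the algebraic identity $\nabla f(u) = \IV(u - e(u))$, which converts Lipschitz estimates for $e$ into Lipschitz estimates for $\nabla f$ and vice versa. I would split the equivalence into two implications and in both directions expand
\[
\|e(u_1) - e(u_2)\|_\IV^2 = \|\Delta u\|_\IV^2 - 2\langle \Delta g, \Delta u\rangle + \|\Delta g\|_{\IV^{-1}}^2,
\]
where $\Delta u := u_1-u_2$ and $\Delta g := \nabla f(u_1) - \nabla f(u_2)$, then invoke the Lipschitz/convexity assumptions on $f$ to bound the cross and quadratic terms in $\Delta g$.

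For the direction $L_{e,\IV} < 1 \Rightarrow L_{f,\IV} < 2$, I would rewrite the identity as $\nabla f(u_1) - \nabla f(u_2) = \IV\bigl((u_1-u_2) - (e(u_1) - e(u_2))\bigr)$, whose $\IV^{-1}$-dual norm equals the $\IV$-norm of the inner parenthesis (since $\|\IV x\|_{\IV^{-1}} = \|x\|_\IV$). The triangle inequality then yields
\[
\|\nabla f(u_1) - \nabla f(u_2)\|_{\IV^{-1}} \leq (1 + L_{e,\IV})\|u_1 - u_2\|_\IV,
\]
so the tight Lipschitz constant satisfies $L_{f,\IV} \leq 1 + L_{e,\IV} < 2$. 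Positivity of $L_{f,\IV}$ is built into the assumption $f \in \mathcal{S}^{1,1}_{\mu_{f,\IV}, L_{f,\IV}}$.

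For the reverse direction $L_{f,\IV} < 2 \Rightarrow L_{e,\IV} < 1$, the decisive input is the Baillon--Haddad cocoercivity in the $\IV$-metric, $\|\Delta g\|_{\IV^{-1}}^2 \leq L_{f,\IV}\,\langle \Delta g, \Delta u\rangle$, which consolidates the last two terms in the expansion into $-(2 - L_{f,\IV})\langle \Delta g, \Delta u\rangle$. Combining with the strong-convexity lower bound $\langle \Delta g, \Delta u\rangle \geq \mu_{f,\IV}\|\Delta u\|_\IV^2$, I obtain $L_{e,\IV}^2 \leq 1 - (2-L_{f,\IV})\mu_{f,\IV} < 1$ whenever $L_{f,\IV} < 2$ and $\mu_{f,\IV} > 0$ (strong convexity is the implicit ingredient that makes the contraction strict; the diagonalisable linear case $\nabla f(u) = Au$ with spectrum of $\IV^{-1}A$ contained in $[\mu_{f,\IV}, L_{f,\IV}]$, so that $L_{e,\IV} = \max\{|1-\mu_{f,\IV}|, |1-L_{f,\IV}|\}$, confirms the sharpness).

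The principal technical step, and the one I would not skip, is cocoercivity in the $\IV$-metric. I would derive it from the $\IV$-norm descent lemma $f(v) \leq f(u) + \langle \nabla f(u), v-u\rangle + \tfrac{L_{f,\IV}}{2}\|v-u\|_\IV^2$ applied to the auxiliary convex function $\phi(x) = f(x) - \langle \nabla f(u_1), x\rangle$: since $\phi$ has the same Lipschitz gradient constant $L_{f,\IV}$ and attains its minimum at $u_1$, taking one gradient descent step of length $1/L_{f,\IV}$ in the $\IV$-metric starting at $u_2$ yields $D_f(u_1, u_2) \geq \tfrac{1}{2L_{f,\IV}}\|\Delta g\|_{\IV^{-1}}^2$; swapping $u_1$ and $u_2$ and adding recovers $\langle \Delta g, \Delta u\rangle = D_f(u_1,u_2) + D_f(u_2,u_1) \geq \tfrac{1}{L_{f,\IV}}\|\Delta g\|_{\IV^{-1}}^2$, which is the required cocoercivity. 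The rest is routine algebra.
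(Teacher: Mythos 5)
Your proposal is correct and follows essentially the same route as the paper: expand $\|e(u_1)-e(u_2)\|_{\IV}^2$ in terms of $\|u_1-u_2\|_{\IV}^2$, the cross term, and $\|\nabla f(u_1)-\nabla f(u_2)\|_{\IV^{-1}}^2$, and use cocoercivity of $\nabla f$ in the $\IV$-metric for sufficiency (the paper simply cites Nesterov for that inequality, while you rederive it; for necessity the paper argues via the same expansion plus Cauchy--Schwarz rather than your triangle inequality, a cosmetic difference). If anything, your sufficiency step is slightly sharper than the paper's, since invoking $\mu_{f,\IV}>0$ gives the quantitative contraction bound $L_{e,\IV}^2 \leq 1-(2-L_{f,\IV})\mu_{f,\IV}$ instead of only a pointwise strict inequality.
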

\begin{proof}
Consider $u_1, u_2 \in \mV$,
\begin{equation}\label{e(u1) - e(u2) square}
    \begin{aligned}
    \|e(u_1) - e(u_2)\|_{\IV }^2 ={}& \|u_1 - u_2 - \IV^{-1}(\nabla f(u_1) - \nabla f(u_2))\|_{\IV}^2 \\
     ={}& \|u_1-u_2\|_{\IV}^2+ \|\nabla f(u_1) - \nabla f(u_2)\|_{\IV^{-1}}^2 \\
    &- 2\langle u_1-u_2, \nabla f(u_1) - \nabla f(u_2) \rangle .
    \end{aligned}
\end{equation}
If $L_{e, \IV} < 1$, we have $\|e(u_1) - e(u_2)\|_{\IV }^2  < \|u_1-u_2\|_{\IV}^2, $
and by~\eqref{e(u1) - e(u2) square}
\begin{equation*}
    \begin{aligned}
    \|\nabla f(u_1) - \nabla f(u_2)\|_{\IV^{-1}}^2 &< 2\langle u_1-u_2, \nabla f(u_1) - \nabla f(u_2) \rangle\\
    & \leq 2 \|\nabla f(u_1) - \nabla f(u_2)\|_{\IV^{-1}} \|u_1-u_2\| _{\IV},
    \end{aligned}
\end{equation*}
which implies $L_{f, \IV} < 2$. If $L_{f,\IV} =0$, then $\|e(u_1) - e(u_2)\|_{\IV }^2  =\|u_1-u_2\|_{\IV}^2$ contradicts with $L_{e, \IV} < 1$. 

We now show sufficiency. If $0< L_{f, \IV} < 2$, then for $u_1, u_2 \in \mV$, we have the inequality~\cite[Chapter 2]{nesterov2003introductory}
$$\|\nabla f(u_1) - \nabla f(u_2)\|_{\IV^{-1}}^2 < 2\langle u_1-u_2, \nabla f(u_1) - \nabla f(u_2) \rangle,$$
and, by~\eqref{e(u1) - e(u2) square},
$$\|e(u_1) - e(u_2)\|_{\IV }^2 < \|u_1-u_2\|^2,$$
which implies $L_{e, \IV} < 1.$

%
%

\end{proof}

The condition $L_{f,\IV} > 0$ is to eliminate the degenerate case $f(u)$ is affine.  The condition $L_{f,\IV}<2$ can be achieved by either a rescaling of $f$ or the inner product $\mathcal I_{\mV}$. For example, for $f \in \mathcal{S}_{\mu_f, L_f}$, we can choose  $\IV^{-1} = \frac{1}{L_f}  I_m < \frac{2}{L_f}I_m$ , then
$$
    \|\nabla f(u_1) - \nabla f(u_2)\|^2_{\IV^{-1}} = \frac{1}{L_f} \|\nabla f(u_1) - \nabla f(u_2)\|^2 \leq  L_f \|u_1-u_2\|^2 = \|u_1-u_2\|^2_{\IV},
$$
for all $u_1, u_2 \in \mV$ which implies $L_{f,\IV}\leq 1$. For this example, the function $e(u)$ is simply a gradient descent step at $u$ for function $f$ with step size $1/L_f$.


\begin{table}[htp]
	\centering
	\caption{Derived convexity constants and Lipschitz constants for $f \in \mathcal{S}_{\mu_{f, \IV}, L_{f, \IV}}$, $g_B \in \mathcal{S}_{\mu_{g_B, \IQ}, L_{g_B, \IQ}}$, with $g_B(p) = g(p)  + \frac{1}{2}(B\IV^{-1}B^\top p, p)$, and $e(u)=u - \IV^{-1}\nabla f(u)$ is Lipschitz continuous with constant $L_{e, \IV}<1$.}
	\renewcommand{\arraystretch}{1.5}
	\begin{tabular}{@{} l l @{}}
	\toprule
	\hskip 1.8cm $\mu$ &  \hskip 1.8cm $L$   \\
 \hline
$\mu_S = \lambda_{\min} \left (\IQ^{-1}B\IV^{-1}B^\top \right)$ & $L_S^2= \lambda_{\max} \left (\IQ^{-1}B\IV^{-1}B^\top \right)$ \\
$\mu_{\mV} = \mu_{f,\IV}$ & $L_{\mV}^2 = 2\left (L_{e, \IV}^2 (1 + L_S^2) \right)$  \\
$\mu_{\mQ} = \left (2- L_{f,\IV}\right) \mu_{g_B, \IQ}$ & $L_{\mQ}^2 = 2 L_{g_B, \IQ}^2$ 
\medskip \\

\bottomrule
	\end{tabular}
	\label{table:constants}
\end{table}
%

\section{Transformed Primal-Dual Flow}\label{sec: transformed primal-dual flow}

In this section, we propose a transformed primal-dual flow and verify the strong Lyapunov property for a quadratic and convex Lyapunov function. Furthermore, we show the Lipschitz continuity of the flow. We assume $f$ is strongly convex but $g$ may not. In view of the dual problem~\eqref{eq: dual problem}, the saddle point $(u^*,p^*)$ exists and is unique. 

\subsection{Transformed primal-dual flow}
Given an SPD matrix $\IV$ for $\mV$ and $\IQ$ for $\mQ$, we consider a  transformed primal-dual flow:
\begin{equation}\label{eq: transformed primal-dual flow}
\left \{ \begin{aligned}
u' &=  \mathcal{G}^u(u,p)\\
p' &= \mathcal{G}^p(u,p)
\end{aligned} \right.
\end{equation}
with
\begin{align}
\label{eq:Gu}  
    \mathcal{G}^u(u,p) &= -\IV^{-1}\partial_u \mathcal{L}(u,p) = -\IV^{-1}(\nabla f(u)+ B^{\top}p) \correction{= e(u) - v}, \\
 \label{eq:Gp}    \mathcal{G}^p(u,p) &= \IQ^{-1}\left ( \partial_p \mathcal{L}(u,p)-B\IV^{-1}\partial_u \mathcal{L}(u,p) \right ) = - \IQ^{-1}\left(\nabla g_B(p) - B e(u)  \right),
\end{align}
where recall that $e(u) = u - \IV^{-1}\nabla f(u), v = u + \IV^{-1} B^\top p$, and $g_B(p) = g(p)  + \frac{1}{2}(B\IV^{-1}B^\top p, p).$ Namely for the primary variable $u$, we use a preconditioned gradient flow and for the dual variable $p$, we use a preconditioned gradient flow associated to $g_B$ but perturbed by $Be(u)$. 
Since $B$ is surjective, $B\IV^{-1}B^\top$ is always SPD. The non-strongly convex function $g(p)$ is enhanced to a strongly convex function $g_B(p)\in \mathcal{S}_{\mu_{g_B,\IQ}, L_{g_B,\IQ}}$. 

We denote $\mathcal{G}(u,p) = (\mathcal{G}^u(u,p), \mathcal{G}^p(u,p))^\top$. The equilibrium point $(u^*, p^*)$ of the flow gives $\mathcal{G}(u^*,p^*) = 0$, which satisfies \correction{the first order condition $\nabla \mathcal{L}(u^*,p^*)=0$}. 


\subsection{Strong Lyapunov property}\label{sec: strong Lyapunov property}
Define Lyapunov function
\begin{equation}\label{eq: Lyapunov}
    \mathcal{E}(u,p) = \frac{1}{2} \|u-u^*\|^2_{\IV} + \frac{1}{2} \|p-p^*\|^2_{\IQ}.
\end{equation}
The transformed primal-dual flow~\eqref{eq: transformed primal-dual flow} satisfies the error equation
\begin{equation*}
\begin{pmatrix}
u-u^* \\
p-p^* 
\end{pmatrix}' =
\begin{pmatrix}
\mathcal{G}^u(u,p) - \mathcal{G}^u(u^*,p^*) \\
\mathcal{G}^p(u,p) - \mathcal{G}^p(u^*,p^*)
\end{pmatrix}.
\end{equation*}
We aim to verify the strong Lyapunov property to obtain the exponential decay. The key is the following lower bound of the cross term. 

\begin{lemma}\label{lem:key}
Suppose $f \in \mathcal{S}_{\mu_{f,\IV}, L_{f,\IV}}$. 
For any $u_1,u_2\in \mV$ and $p_1, p_2\in \mQ$, we have
\begin{align*}
  & \langle \nabla f(u_1)-\nabla f(u_2), \IV^{-1}B^\top(p_1 - p_2) \rangle \\
    \geq{} &\frac{\mu_{f,\IV}}{2} \|v_1-v_2\|^2_{\IV} - \frac{L_{f,\IV}}{2}\| B^{\top}(p_1 - p_2)\|^2_{\IV^{-1}} - \frac{1}{2}\langle \nabla f(u_1)-\nabla f(u_2), u_1 - u_2 \rangle,
\end{align*}
where recall that $v = u + \IV^{-1}B^{\top} p$ is the transformed variable.
\end{lemma}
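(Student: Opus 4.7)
The plan is to decompose $f$ as $f(u) = \tilde f(u) + \tfrac{\mu_{f,\IV}}{2}\|u\|^2_{\IV}$, so that $\tilde f$ is convex with $(L_{f,\IV}-\mu_{f,\IV})$-Lipschitz gradient in the $\IV$-norm. Abbreviating $\mu = \mu_{f,\IV}$, $L = L_{f,\IV}$, $\delta u = u_1-u_2$, $\delta p = p_1-p_2$, $g = \nabla f(u_1)-\nabla f(u_2)$, and $\tilde g = \nabla \tilde f(u_1)-\nabla \tilde f(u_2)$, this decomposition gives $g = \tilde g + \mu \IV \delta u$. The identity $v_1-v_2 = \delta u + \IV^{-1}B^\top\delta p$ will be used at the end to expand $\|v_1-v_2\|^2_{\IV}$.

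First, I split the cross term as
\[
 \langle g, \IV^{-1}B^\top\delta p\rangle = \langle \tilde g, \IV^{-1}B^\top\delta p\rangle + \mu\,\langle \delta u, B^\top\delta p\rangle.
\]
For the first piece, I apply Cauchy--Schwarz followed by Young's inequality with weight $L-\mu$,
\[
\langle \tilde g, \IV^{-1}B^\top\delta p\rangle \geq -\tfrac{1}{2(L-\mu)}\|\tilde g\|^2_{\IV^{-1}} - \tfrac{L-\mu}{2}\|B^\top\delta p\|^2_{\IV^{-1}}.
\]
The weight is chosen so that the co-coercivity bound $\|\tilde g\|^2_{\IV^{-1}}\leq (L-\mu)\langle \tilde g,\delta u\rangle$ for the convex $(L-\mu)$-smooth function $\tilde f$, combined with $\langle \tilde g,\delta u\rangle = \langle g,\delta u\rangle - \mu\|\delta u\|^2_{\IV}$, yields exactly the coefficient $-1/2$ on $\langle g,\delta u\rangle$:
\[
\langle \tilde g, \IV^{-1}B^\top\delta p\rangle \geq -\tfrac{1}{2}\langle g,\delta u\rangle + \tfrac{\mu}{2}\|\delta u\|^2_{\IV} - \tfrac{L-\mu}{2}\|B^\top\delta p\|^2_{\IV^{-1}}.
\]

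To finish, I add back the $\mu\,\langle \delta u, B^\top\delta p\rangle$ contribution and complete the square. The combination $\tfrac{\mu}{2}\|\delta u\|^2_{\IV} + \mu\,\langle \delta u, B^\top\delta p\rangle + \tfrac{\mu}{2}\|B^\top\delta p\|^2_{\IV^{-1}}$ is exactly $\tfrac{\mu}{2}\|v_1-v_2\|^2_{\IV}$; the leftover $-\tfrac{\mu}{2}\|B^\top\delta p\|^2_{\IV^{-1}}$ combines with $-\tfrac{L-\mu}{2}\|B^\top\delta p\|^2_{\IV^{-1}}$ into the target $-\tfrac{L}{2}\|B^\top\delta p\|^2_{\IV^{-1}}$, reproducing the claim.

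The main obstacle is recognizing that the split $f = \tilde f + \tfrac{\mu}{2}\|\cdot\|^2_{\IV}$ is the right move: only the sharper $(L-\mu)$-smoothness constant of $\tilde f$ gives the correct Young weight so that all three target coefficients (the $-1/2$ on $\langle g,\delta u\rangle$, the $\mu/2$ multiplying $\|v_1-v_2\|^2_{\IV}$, and the $-L/2$ multiplying $\|B^\top\delta p\|^2_{\IV^{-1}}$) fall into place simultaneously. In the degenerate quadratic case $L=\mu$, $\tilde f$ is affine, so $\tilde g = 0$ and the Young step is vacuous while the square-completion alone gives the identity.
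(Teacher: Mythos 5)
Your proof is correct, and it takes a genuinely different route from the paper's. The paper proves the lemma by rewriting $\IV^{-1}B^\top(p_1-p_2) = u_2-(u_1-v_1+v_2)$, invoking the three-term Bregman divergence identity \eqref{eq: Bregman divergence identity} with that auxiliary point, bounding the resulting Bregman terms by $\frac{\mu_{f,\IV}}{2}\|v_1-v_2\|^2_{\IV}$ and $\frac{L_{f,\IV}}{2}\|B^\top(p_1-p_2)\|^2_{\IV^{-1}}$, and then summing this estimate with its $u_1\leftrightarrow u_2$ counterpart so that the two leftover terms $D_f(u_2,u_1)+D_f(u_1,u_2)$ collapse into the symmetrized divergence $\langle\nabla f(u_1)-\nabla f(u_2),u_1-u_2\rangle$ with the factor $\frac12$. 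You instead split off the strongly convex quadratic part, $f=\tilde f+\frac{\mu_{f,\IV}}{2}\|\cdot\|^2_{\IV}$, use Cauchy--Schwarz/Young with weight $L_{f,\IV}-\mu_{f,\IV}$, co-coercivity of the convex part $\tilde f$ in the $\IV$-metric (the inequality $\|\tilde g\|^2_{\IV^{-1}}\le (L_{f,\IV}-\mu_{f,\IV})\langle\tilde g,\delta u\rangle$, i.e.\ Nesterov's Theorem 2.1.5 transported to the $\IV$-inner product, the same fact the paper cites in the proof of Lemma \ref{lem: Equivalence between Le and Lf}), and a completion of the square that reconstitutes $\|v_1-v_2\|^2_{\IV}$; the bookkeeping of constants is correct and yields exactly the stated coefficients $\frac{\mu_{f,\IV}}{2}$, $-\frac{L_{f,\IV}}{2}$, $-\frac12$. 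What the paper's argument buys is self-containedness and uniformity: it needs only the Bregman identity and the two-sided bounds on $D_f$, and it handles the degenerate case $L_{f,\IV}=\mu_{f,\IV}$ without a case split, at the price of the somewhat ad hoc choice of the third point $u_1-v_1+v_2$ and the symmetrization step. Your argument buys transparency about where each constant comes from and replaces the auxiliary-point trick by standard convexity machinery, at the price of importing co-coercivity for the shifted function and having to treat $L_{f,\IV}=\mu_{f,\IV}$ separately, which you correctly dispose of by noting $\tilde f$ is then affine so $\tilde g=0$ and the square-completion alone gives equality.
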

\begin{proof}
 To use the strong convexity of $f$, we switch between variables using relation $v = u + \IV^{-1} B^\top p$. Write $$\IV^{-1}B^\top(p_1 - p_2) = v_1-v_2-(u_1-u_2) = u_2 - (u_1-v_1+v_2).$$
Using the Bregman divergence identity~\eqref{eq: Bregman divergence identity} and bounds on the Bregman divergence
\begin{equation}\label{triple Bregman divergenve 1}
    \begin{aligned}
    & \langle \nabla f(u_1)-\nabla f(u_2), u_2 - (u_1-v_1+v_2) \rangle\\
     = {}& D_f(u_1-v_1+v_2, u_1) - D_f(u_1-v_1+v_2, u_2) - D_f(u_2, u_1)\\
     \geq {}& \frac{\mu_{f,\IV}}{2}\|v_1-v_2 \|_{\IV}^2 - \frac{L_{f,\IV}}{2}\|u_1 - u_2 - (v_1 - v_2)\|^2_{\IV} - D_f(u_2, u_1)\\
     = {}& \frac{\mu_{f,\IV}}{2}\|v_1-v_2 \|_{\IV}^2 - \frac{L_{f,\IV}}{2}\| B^{\top}(p_1- p_2)\|^2_{\IV^{-1}} - D_f(u_2, u_1).
    \end{aligned}
\end{equation}
Similarly, we exchange $u_1$ and $u_2$ to obtain
\begin{equation}\label{triple Bregman divergenve 2}
    \begin{aligned}
    & \langle \nabla f(u_2)-\nabla f(u_1), u_1 - (u_2+v_1-v_2) \rangle\\
 = {} &D_f(u_2+v_1-v_2, u_2)  - D_f(u_2+v_1-v_2, u_1) - D_f(u_1, u_2)\\
\geq {}& \frac{\mu_{f,\IV}}{2}\|v_1-v_2 \|_{\IV}^2 - \frac{L_{f,\IV}}{2}\| B^{\top}(p_1- p_2)\|^2_{\IV^{-1}} - D_f(u_1, u_2).
    \end{aligned}
\end{equation}
Summing~\eqref{triple Bregman divergenve 1} and~\eqref{triple Bregman divergenve 2}, we obtain the desired bound.

\end{proof}

We next verify the strong Lyapunov property. 
\begin{theorem}\label{Continuous strong Lyapunov for transformed gradient flow}
Assume $f(u) \in \mathcal{S}_{\mu_{f,\IV}, L_{f,\IV} } $ with $0< \mu_{f,\IV} \leq L_{f, \IV}< 2$. 
Then for the Lyapunov function~\eqref{eq: Lyapunov} and the TPD field $\mathcal G$~\eqref{eq:Gu}-\eqref{eq:Gp}, the following strong Lyapunov property holds
\begin{equation}\label{eq: strong lyapunov in continuous level}
    -\nabla \mathcal{E}(u,p) \cdot \mathcal{G}(u,p) \geq \mu \, \mathcal{E}(u,p) + \frac{\mu_{f,\IV}}{2}\| v - v^* \|_{\IV}^2,
\end{equation}
where $0 < \mu = \min \left \{\mu_{\mV}, \mu_{\mQ}\right\}$ with $\mu_{\mV}, \mu_{\mQ}$ defined in Table \ref{table:constants}. Consequently if $(u(t), p(t))$ solves the TPD flow~\eqref{eq: transformed primal-dual flow}, we have the exponential decay
$$\mathcal{E}(u(t),p(t)) \leq e^{-\mu t}\mathcal{E}(u(0),p(0)), \quad t > 0.$$
\end{theorem}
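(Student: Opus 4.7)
The plan is to prove the strong Lyapunov inequality \eqref{eq: strong lyapunov in continuous level} by direct algebraic manipulation, then derive the exponential decay via Gronwall's inequality along trajectories of \eqref{eq: transformed primal-dual flow}. Since $\mathcal{E}$ is a weighted quadratic with $\nabla_u \mathcal{E} = \IV(u-u^*)$ and $\nabla_p \mathcal{E} = \IQ(p-p^*)$, the matrices $\IV^{-1}, \IQ^{-1}$ appearing in the TPD field \eqref{eq:Gu}--\eqref{eq:Gp} cancel against these weights when forming $-\nabla\mathcal{E}\cdot\mathcal{G}$, leaving an expression in pure duality pairings of $\partial_u\mathcal{L}$ and $\partial_p\mathcal{L}$.

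The first step is to insert the optimality condition $\nabla\mathcal{L}(u^*,p^*)=0$, substitute the explicit forms of $\partial_u\mathcal{L}$ and $\partial_p\mathcal{L}$, and observe the following two simplifications: (i) the antisymmetric pair $\pm\langle u-u^*,B^\top(p-p^*)\rangle$ cancels exactly as in Lemma \ref{lem: Lyapunov property for primal-dual flow}; and (ii) the Schur-type correction $B\IV^{-1}B^\top(p-p^*)$ hidden in $\mathcal{G}^p$ merges with $\nabla h(p)-\nabla h(p^*)$ to form $\nabla h_B(p)-\nabla h_B(p^*)$. After this bookkeeping, $-\nabla\mathcal{E}\cdot\mathcal{G}$ reduces to a primal diagonal term involving $\nabla f$, a dual diagonal term involving $\nabla h_B$, and one residual cross pairing $\langle \IV^{-1}B^\top(p-p^*),\,\nabla f(u)-\nabla f(u^*)\rangle$ of indefinite sign.

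This residual cross pairing is the main obstacle, and Lemma \ref{lem:key} is exactly tailored for it. Applied with $(u_1,u_2,p_1,p_2)=(u,u^*,p,p^*)$, the lemma converts the cross term into the positive gain $\tfrac{\mu_{f,\IV}}{2}\|v-v^*\|_{\IV}^2$ at the cost of a Lipschitz penalty $-\tfrac{L_{f,\IV}}{2}\|B^\top(p-p^*)\|_{\IV^{-1}}^2$ plus one half of the primal diagonal, both of which I will absorb elsewhere. The primal side is then closed immediately: strong convexity of $f$ in the $\IV$-norm applied to the remaining half-diagonal delivers $\tfrac{\mu_{\mV}}{2}\|u-u^*\|_{\IV}^2$ since $\mu_{\mV}=\mu_{f,\IV}$. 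For the dual side I will use the identity $\|B^\top(p-p^*)\|_{\IV^{-1}}^2 = \langle p-p^*,\nabla h_B-\nabla h_B^*\rangle - \langle p-p^*,\nabla h-\nabla h^*\rangle$ to rewrite the dual contribution as $(1-L_{f,\IV}/2)\langle p-p^*,\nabla h_B-\nabla h_B^*\rangle + (L_{f,\IV}/2)\langle p-p^*,\nabla h-\nabla h^*\rangle$, where both coefficients are nonnegative thanks to the assumption $L_{f,\IV}<2$. Strong convexity of $h_B$ in the $\IQ$-norm then produces exactly $(1-L_{f,\IV}/2)\mu_{h_B,\IQ}\|p-p^*\|_{\IQ}^2 = \tfrac{\mu_{\mQ}}{2}\|p-p^*\|_{\IQ}^2$, while convexity of $h$ leaves the other term nonnegative. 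Summing the three nonnegative contributions yields \eqref{eq: strong lyapunov in continuous level} with $\mu=\min\{\mu_{\mV},\mu_{\mQ}\}>0$.

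The exponential decay is then immediate. Differentiating along a solution of \eqref{eq: transformed primal-dual flow} gives $\tfrac{d}{dt}\mathcal{E}(u(t),p(t)) = \nabla\mathcal{E}\cdot\mathcal{G} \leq -\mu\,\mathcal{E}(u(t),p(t))$ after discarding the nonnegative $\tfrac{\mu_{f,\IV}}{2}\|v-v^*\|_{\IV}^2$ term, and Gronwall's inequality applied to this scalar ODE inequality produces the stated bound $\mathcal{E}(u(t),p(t))\le e^{-\mu t}\mathcal{E}(u(0),p(0))$.
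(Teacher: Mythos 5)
Your proposal is correct and follows essentially the same route as the paper: the same reduction of $-\nabla\mathcal{E}\cdot\mathcal{G}$ to primal/dual diagonal terms plus the single cross pairing (your grouping via $\nabla h_B$ is algebraically identical to the paper's ${\rm I}_1+{\rm I}_2-{\rm I}_3$ split), the same application of Lemma \ref{lem:key} at $(u,u^*,p,p^*)$, and the same strong-convexity bounds yielding $\mu_{\mV}=\mu_{f,\IV}$ and $\mu_{\mQ}=(2-L_{f,\IV})\mu_{h_B,\IQ}$ before concluding by Gronwall. No gaps.
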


\begin{proof}
To verify the strong Lyapunov property for $\mathcal E(u,p)$, we split it as
\begin{equation*}
    \begin{aligned}
    -\nabla \mathcal{E}(u,p) \cdot \mathcal{G}(u,p) =& -\nabla \mathcal{E}(u,p) \cdot (\mathcal{G}(u,p)-\mathcal{G}(u^*,p^*)) \\
    ={} &\langle u-u^*,  \partial_u \mathcal{L}(u,p)  - \partial_u \mathcal{L}(u^*,p^*) \rangle \\
    &+\langle p-p^*, B\IV^{-1}(\partial_u \mathcal{L}(u,p)  - \partial_u \mathcal{L}(u^*,p^*))\rangle \\
    &- \langle p-p^*, \partial_p \mathcal{L}(u,p)  - \partial_p \mathcal{L}(u^*,p^*)\rangle \\
    :={} & {\rm I}_1+ {\rm I}_2 - {\rm I}_3. 
    \end{aligned}
\end{equation*}
By Lemma \ref{lem: Lyapunov property for primal-dual flow} for the primal-dual flow
\begin{equation*}
    \begin{aligned}
    {\rm I}_1 -{\rm I}_3 
     &=  \langle  \nabla f(u) - \nabla f(u^*), u-u^*\rangle + \langle \nabla g(p)- \nabla g(p^*), p-p^*\rangle,
    \end{aligned}
\end{equation*}
which are non-negative terms.

As $\IV$ and $B$ are linear operators,
\begin{equation*}
    \begin{aligned}
    {\rm I}_2  &= \langle \IV^{-1}B^\top(p-p^*),  \partial_u \mathcal{L}(u,p) - \partial_u \mathcal{L}(u^*,p^*) \rangle \\
    &= \langle \nabla f(u)-\nabla f(u^*), \IV^{-1}B^\top(p-p^*) \rangle + \| B^\top(p-p^*)\|_{\IV^{-1}}^2.
    \end{aligned}
\end{equation*}
We apply Lemma \ref{lem:key} to the cross term $\langle \nabla f(u)-\nabla f(u^*), \IV^{-1}B^\top(p-p^*) \rangle$ to get 
\begin{equation*}
    \begin{aligned}
    &-\nabla \mathcal{E}(u,p) \cdot \mathcal{G}(u,p) -  \frac{\mu_{f,\IV}}{2}\| v - v^* \|_{\IV}^2 \\
    \geq{} & \frac{1}{2}\langle \nabla f(u)-\nabla f(u^*), u-u^* \rangle +\langle \nabla g(p)- \nabla g(p^*), p-p^*\rangle \\
    &+ \left (1-\frac{L_{f,\IV}}{2} \right) \| B^\top(p-p^*) \|_{\IV^{-1}}^2 \\
    \geq{} & \frac{\mu_{\mathcal V}}{2} \|u-u^*\|_{\IV}^2 +  \frac{\mu_{\mathcal Q}}{2}\|p-p^*\|_{\IQ}^2.
    \end{aligned}
\end{equation*}
We then complete the proof by rearranging the terms.
\end{proof}

\begin{remark}\rm 
For the linear saddle point system, $A \in \mathbb{R}^{m\times m}$ is SPD, $C \in \mathbb{R}^{n\times n}$ is positive semidefinite, $f(u) = \frac{1}{2}(Au, u) + (a,u)$ and $g(p) = \frac{1}{2}(Cp, p)  + (c,p)$. An ideal choice is  $\IV^{-1} = A^{-1}$ and $\IQ^{-1} = S^{-1} = (BA^{-1}B^\top+C)^{-1}$. Then we have $L_{e,\IV}=0$,  $\mu_{f,\IV} = L_{f,\IV} = \mu_{g_B, \IQ} = L_{g_B, \IQ} = 1$ and thus
$$-\nabla \mathcal{E}(u,p) \cdot \mathcal{G}(u,p) \geq \mathcal{E}(u,p),$$
which yields the exponential decay
$$\mathcal{E}(u(t),p(t)) \leq e^{- t}\mathcal{E}(u(0),p(0)).$$
However, $A^{-1}$ and $S^{-1}$ are not computable in general. The inner product $\IV^{-1}$ and $\IQ^{-1}$ can be thought of as inexact solvers approximating $A^{-1}$ and $S^{-1}$, respectively. $\square$ 
\end{remark}

To guarantee the exponential decay, we require $0<L_{f,\IV}<2$ which is equivalent to $e(u)$ is a contraction \correction{by Lemma \ref{lem: Equivalence between Le and Lf}}. The requirement can be always satisfied by a rescaling. Indeed in later analysis, we will choose $\IV$ so that $L_{f,\IV}\leq 1$. Then $\mu = \min \{\mu_{f,\IV}, \mu_{g_B,\IQ} \}$. When $\min \{\mu_{f,\IV}, \mu_{g_B,\IQ} \} \ll \max \{\mu_{f,\IV}, \mu_{g_B,\IQ} \}$, further scaling in $\IV$ or $\IQ$ can be introduced to balance the decay rate for the primal and dual variables. For discrete schemes, the rate will be determined by the condition number which is the ratio of Lipschitz constants and the convexity constants.

So next we show that the vector field $\mathcal G(u,p)$ is Lipschitz continuous and give bounds on Lipschitz constants. 
\begin{lemma}\label{Lipschitz continuity of flow}
Assume $\nabla f$ and $\nabla g_B$ are Lipschitz continuous with Lipschitz constant $L_{f,\IV}$ and $L_{g_B, \IQ}$, respectively. Let $L_{e,\IV}$ be the Lipschitz constant of $e(u)$, then we have
\begin{align}
\label{eq:LipGu} \|\mathcal{G}^u(u_1,p_1) - \mathcal{G}^u(u_2,p_2)\|_{\IV} &\leq L_{e, \IV}\|u_1-u_2\|_{\IV} + \|v_1-v _2\|_{\IV},\\
\label{eq:LipGp} \|\mathcal{G}^p(u_1,p_1) - \mathcal{G}^p(u_2,p_2)\|_{\IQ} &\leq  L_{e, \IV}L_S\|u_1-u_2\|_{\IV} + L_{g_B,\IQ}\|p_1-p_2\|_{\IQ},
\end{align}
for all $u_1, u_2 \in \mV$ and $p_1, p_2 \in \mQ$.
\end{lemma}
\begin{proof}
By the formulation~\eqref{eq:Gu} we have
$$
\mathcal{G}^u(u,p) = e(u) - v.
$$
Consequently 
\begin{equation*}
    \begin{aligned}
    \|\mathcal{G}^u(u_1,p_1) - \mathcal{G}^u(u_2, p_2)\|_{\IV}  
    &\leq L_{e, \IV}\|u_1-u_2\|_{\IV} + \|v_1-v
     _2\|_{\IV}.
\end{aligned}
\end{equation*}
By the formulation~\eqref{eq:Gp},
\begin{equation*}
    \begin{aligned}
    \|\mathcal{G}^p(u_1,p_1) - \mathcal{G}^p(u_2, p_2)\|_{\IQ}  \leq{} &  \|\nabla g_B(p_1)-\nabla g_B(p_2)\|_{\IQ^{-1}}  + \|B (e(u_1) - e(u_2)) \|_{\IQ^{-1}}  \\
\leq{}& L_{g_B, \IQ}\|p_1-p_2\|_{\IQ} + L_{e, \IV}L_S\|u_1-u_2\|_{\IV} 
\end{aligned}
\end{equation*}
where we have used
$$\lambda_{\max} \left(\IV^{-1}B^\top\IQ^{-1}B \right) = \lambda_{\max} \left(\IQ^{-1}B\IV^{-1}B^\top  \right) = L_S^2$$
to bound 
$$
\|B (e(u_1) - e(u_2)) \|_{\IQ^{-1}}^2 \leq L_S^2 \|e(u_1) - e(u_2) \|_{\IV}^2 \leq L_S^2 L_{e,\IV}^2\| u_1- u_2 \|_{\IV}^2. 
$$
\end{proof}
\correction{Notice that on the right hand side of \eqref{eq:LipGu}, $\|v_1-v _2\|_{\IV}$ appears which can be further bound by $\| u_1- u_2 \|_{\IV}$ and $\|p_1-p_2\|_{\IQ}$ using the triangle inequality. Here we keep $\|v_1-v _2\|_{\IV}$ with a neat Lipschitz constant $1$ and match the extra quadratic term in the strong Lyapunov property \eqref{eq: strong lyapunov in continuous level}.}

\section{Transformed Primal-Dual Iterations}\label{Discrete Schemes}
In this section, we derive several transformed primal-dual iterations, which are the discrete schemes for solving the TPD flow and obtain linear convergence rate based on the strong Lyapunov property. 


\subsection{Implicit Euler methods}
Given the initial guess $(u_0, p_0)$, for $k=0,1,\ldots,$ consider the implicit Euler method for the TPD flow~\eqref{eq: transformed primal-dual flow}:
\begin{equation}\label{IE discretization}
\left\{\begin{array}{l}\begin{aligned}
u_{k+1} &= u_k + \alpha_k \mathcal G^u(u_{k+1}, p_{k+1}), \\
p_{k+1}  &= p_k + \alpha_k \mathcal G^p(u_{k+1}, p_{k+1}).
\end{aligned}\end{array}\right.
\end{equation}

We show by the next theorem that the implicit scheme~\eqref{IE discretization} inherits the linear convergence rate from the strong Lyapunov property in the continuous level.

\begin{theorem}\label{convergence rate for IE discretization}
Suppose $f(u) \in \mathcal{S}_{\mu_{f,\IV}, L_{f,\IV} } $  with $0< \mu_{f,\IV} \leq L_{f, \IV}< 2$.  Let $(u_k, p_k)$ follows the implicit scheme~\eqref{IE discretization} for the TPD flow with initial value $(u_0, p_0)$, it holds that, for any $\alpha_k > 0$, 
$$\mathcal{E}(u_{k+1}, p_{k+1}) \leq \frac{1}{1+\alpha_k \mu}\mathcal{E}(u_k, p_k),\quad k\geq 0,$$
for the Lyapunov function defined by~\eqref{eq: Lyapunov} and $\mu =  \min \left \{\mu_{\mV},\mu_{\mQ}\right\}$.
\end{theorem}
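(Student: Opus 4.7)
The plan is to mimic the continuous strong-Lyapunov argument at the discrete level, exploiting the fact that $\mathcal{E}$ is a squared norm in order to avoid any step-size restriction. First I would expand the energy gap using the elementary identity $\tfrac{1}{2}\|a-x^*\|^2_M - \tfrac{1}{2}\|b-x^*\|^2_M = (a-b,\, b-x^*)_M + \tfrac{1}{2}\|a-b\|^2_M$, applied separately in the $\IV$-inner product on the primal coordinate and the $\IQ$-inner product on the dual coordinate. With $a = (u_k,p_k)$ and $b = (u_{k+1},p_{k+1})$ this gives
\begin{align*}
\mathcal{E}(u_k,p_k) - \mathcal{E}(u_{k+1},p_{k+1}) ={}& (u_k-u_{k+1},\, u_{k+1}-u^*)_{\IV} + (p_k-p_{k+1},\, p_{k+1}-p^*)_{\IQ} \\
&+ \tfrac{1}{2}\|u_k-u_{k+1}\|^2_{\IV} + \tfrac{1}{2}\|p_k-p_{k+1}\|^2_{\IQ}.
\end{align*}

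Next I would plug in the implicit Euler update~\eqref{IE discretization}, writing the increments $u_k - u_{k+1}$ and $p_k - p_{k+1}$ as $\alpha_k$ times the components of the TPD vector field $\mathcal G$ evaluated at the \emph{new} iterate $(u_{k+1},p_{k+1})$ (with the sign dictated by consistency of implicit Euler for $u' = \mathcal{G}^u$, $p' = \mathcal{G}^p$). The two cross-terms then collapse precisely to
\[
-\alpha_k\, \nabla \mathcal{E}(u_{k+1},p_{k+1}) \cdot \mathcal{G}(u_{k+1},p_{k+1}),
\]
which is exactly the quantity controlled by the continuous strong Lyapunov property in Theorem~\ref{Continuous strong Lyapunov for transformed gradient flow}. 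Applying that bound pointwise at $(u_{k+1},p_{k+1})$, and discarding both the nonnegative remainder $\tfrac{\alpha_k\mu_{f,\IV}}{2}\|v_{k+1}-v^*\|^2_{\IV}$ and the nonnegative squared increments $\tfrac{1}{2}\|u_k-u_{k+1}\|^2_{\IV}+\tfrac{1}{2}\|p_k-p_{k+1}\|^2_{\IQ}$, I obtain
\[
\mathcal{E}(u_k,p_k) - \mathcal{E}(u_{k+1},p_{k+1}) \;\geq\; \alpha_k\, \mu\, \mathcal{E}(u_{k+1},p_{k+1}),
\]
and rearranging yields the claimed contraction factor $1/(1+\alpha_k\mu)$.

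I do not expect a genuine obstacle here. Because $\mathcal{E}$ is quadratic, the $O(\alpha_k^2)$ remainder in the expansion enters with a \emph{favorable} sign and can simply be thrown away; because the scheme is fully implicit, the strong-Lyapunov estimate is invoked at the new iterate $(u_{k+1},p_{k+1})$ without requiring $\alpha_k$ to be small. This is precisely the mechanism by which the rate $1/(1+\alpha_k\mu)$ holds for arbitrary $\alpha_k>0$, mirroring the unconditional contractivity of implicit Euler for gradient flows of strongly convex functionals. The only issue not addressed by this argument is well-posedness of the implicit step itself, namely existence and uniqueness of $(u_{k+1},p_{k+1})$ solving the nonlinear system~\eqref{IE discretization}; this is a separate matter from the rate estimate and is typically handled by a monotonicity/contraction argument on the fixed-point map, which the authors will presumably take up when discussing practical inner solvers.
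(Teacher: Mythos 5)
Your proposal is correct and follows essentially the same route as the paper: the paper bounds $\mathcal{E}(u_{k+1},p_{k+1})-\mathcal{E}(u_k,p_k)$ by $\langle\nabla\mathcal{E}(u_{k+1},p_{k+1}),\alpha_k\mathcal{G}(u_{k+1},p_{k+1})\rangle$ via convexity of $\mathcal{E}$ and then invokes the strong Lyapunov property \eqref{eq: strong lyapunov in continuous level} at the new iterate, which is exactly what your exact quadratic expansion gives after discarding the favorable $O(\alpha_k^2)$ increment terms. Your sign convention $x_{k+1}-x_k=\alpha_k\mathcal{G}(x_{k+1},p_{k+1})$ matches what the paper's proof actually uses, and your closing remark about well-posedness of the implicit step concerns an issue the paper likewise leaves untreated.
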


\begin{proof}
Since $\mathcal{E}(u,p)$ is convex, we have
\begin{equation*}
    \begin{aligned}
    \mathcal{E}(u_{k+1}, p_{k+1}) - \mathcal{E}(u_k, p_k) 
\leq{}& \langle \nabla  \mathcal{E}(u_{k+1}, p_{k+1}), \alpha_k\mathcal{G}(u_{k+1}, p_{k+1})  \rangle  \\
\leq{} & -\alpha_k \mu  \mathcal{E}(u_{k+1}, p_{k+1}).
    \end{aligned}
\end{equation*}
The last inequality holds by the strong Lyapunov property~\eqref{eq: strong lyapunov in continuous level} in the continuous level. Then the linear convergence follows.
\end{proof}

For the implicit schemes, the larger the step size, the better the convergence rate. By increasing $\alpha_k$, the outer iteration may even achieve super-linear convergence. However,  the iteration~\eqref{IE discretization} is a nonlinear system with $u$ and $p$ coupled together. \correction{Consider the example when} $\IV = L_f I_m$ is a scaled identity and the proximal operator of $f$ is available, then we can solve $ u_{k+1}= {\rm prox}_{f, \alpha_k/L_f}(u_k - \frac{\alpha_k}{L_f}B^\top p_{k+1}) $ from the first equation of~\eqref{IE discretization} and substitute into the second to get a nonlinear equation of $p_{k+1}$
%
\begin{equation*}
    \begin{aligned}
    p_{k+1} &= p_k \correction{-} \IQ^{-1} \left [ \alpha_k \nabla g(p_{k+1}) + Bu_k - (1+\alpha_k)B\, {\rm prox}_{f, \frac{\alpha_k}{L_f}}\left (u_k - \frac{\alpha_k}{L_f}B^\top p_{k+1} \right) \right ]. \\
    \end{aligned}
\end{equation*}
If furthermore $\nabla {\rm prox}_{f, \alpha_k/L_f}$ is known, Newton's methods can be applied to solve this nonlinear equation. This is in the same \correction{spirit} of the semi-smooth Newton method developed in~\cite{Li;Sun;Toh:2018highly} for a non-smooth convex function $f$ (LASSO problem). 

\correction{In general, solving \eqref{IE discretization} may be as difficult as solving $\nabla \mathcal L(u,p) = 0$ and thus may not be practical. We shall explore more explicit schemes.}

%
%

\subsection{Explicit Euler methods}
An explicit discretization for~\eqref{eq: transformed primal-dual flow} is as follows:
\begin{equation}\label{EE discretization}
\left\{\begin{array}{l}\begin{aligned}
u_{k+1} &= u_k + \alpha_k \mathcal G^u(u_{k}, p_{k}), \\
p_{k+1}  &= p_k + \alpha_k \mathcal G^p(u_{k}, p_{k}).
\end{aligned}\end{array}\right.
\end{equation}
We present an equivalent but computationally favorable form of $\mathcal{G}^p(u,p)$
\begin{equation}
 \label{eq:GpUzawa}   
 \mathcal{G}^p(u,p) = - \IQ^{-1}\left [\nabla g(p) - B (u - \IV^{-1} (\nabla f(u)+ B^{\top}p ))  \right].
\end{equation}
Then~\eqref{EE discretization} is equivalent to
\begin{equation}
\left\{\begin{array}{l}
   \begin{aligned}\label{EE TPD}
u_{k+1/2} &= u_k - \IV^{-1}(\nabla f(u_k) + B^\top p_{k}),\\ 
p_{k+1}  &= p_k - \alpha_k \IQ^{-1}\left(\nabla g(p_k) - B u_{k+1/2} \right),\\
u_{k+1} &= (1-\alpha_k)u_k + \alpha_k u_{k+1/2}.
\end{aligned}
\end{array}\right. 
\end{equation}
The update of $(u_{k+1/2}, p_{k+1})$ is a variant of inexact Uzawa methods and $u_{k+1}$ is obtained by a weighted average of $u_k$ and $u_{k+1/2}$. The convergence is clear in the formulation~\eqref{EE discretization}. 


\begin{theorem}\label{convergence rate for EE discretization}
Suppose $f(u) \in \mathcal{S}_{\mu_{f,\IV}, L_{f,\IV} }$  with $0< \mu_{f,\IV} \leq L_{f, \IV}< 2$. Let $(u_k, p_k)$ follows the explicit scheme ~\eqref{EE discretization} for the TPD flow with initial value $(u_0, p_0)$. For the Lyapunov function defined by~\eqref{eq: Lyapunov}, it holds that
$$\mathcal{E}(u_{k+1}, p_{k+1}) \leq (1-\delta_k)\mathcal{E}(u_k, p_k)$$
for $0 < \alpha_k < \displaystyle \min \left \{\mu_{\mV}/L_{\mV}^2, \mu_{\mQ}/L_{\mQ}^2, \mu_{f,\IV}/2 \right \}$
and $$0 < \delta_k = \min \left \{ \alpha_k(\mu_{\mV} - L_{\mV}^2\alpha_k), \alpha_k\left ( \mu_{\mQ}- L_{\mQ}^2\alpha_k\right) \right\}<1.$$ 
In particular, for $\alpha_k = 0.5\min \{\mu_{\mV}, \mu_{\mQ}\}/\max \{ L_{\mV}^2, L_{\mQ}^2, 2\}$, we have the linear rate
$$\mathcal{E}(u_{k+1}, p_{k+1}) \leq (1-\frac{1}{4\kappa^2})\mathcal{E}(u_k, p_k),$$
with $\kappa \geq \max\{ \kappa_{\mV}, \kappa_{\mQ}\} , \kappa_{\mV} := \max \{L_{\mV}, 2\}/\mu_{\mV}, \kappa_{\mQ} := L_{\mQ}/\mu_{\mQ}$. 
\end{theorem}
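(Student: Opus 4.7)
My plan is to mimic the continuous analysis from Theorem \ref{Continuous strong Lyapunov for transformed gradient flow} but picking up an additional $O(\alpha_k^2)$ error term from the explicit Euler discretization, which the Lipschitz bound of Lemma \ref{Lipschitz continuity of flow} lets us absorb when $\alpha_k$ is small enough.

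First I will exploit the fact that $\mathcal E$ is exactly quadratic in the $\IV$- and $\IQ$-norms. Substituting the explicit Euler update $u_{k+1}-u_k=\alpha_k\mathcal G^u(u_k,p_k)$ and $p_{k+1}-p_k=\alpha_k\mathcal G^p(u_k,p_k)$ into the identity
\begin{equation*}
\mathcal E(u_{k+1},p_{k+1})-\mathcal E(u_k,p_k)=\alpha_k\,\nabla\mathcal E(u_k,p_k)\cdot\mathcal G(u_k,p_k)+\tfrac{\alpha_k^2}{2}\bigl(\|\mathcal G^u\|_{\IV}^2+\|\mathcal G^p\|_{\IQ}^2\bigr)
\end{equation*}
splits the analysis cleanly into a ``dissipation'' term of order $\alpha_k$ and an ``overshoot'' term of order $\alpha_k^2$.

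Next I would apply the strong Lyapunov property~\eqref{eq: strong lyapunov in continuous level} to the first-order term, obtaining the bound
\begin{equation*}
\alpha_k\,\nabla\mathcal E(u_k,p_k)\cdot\mathcal G(u_k,p_k)\le -\alpha_k\Bigl[\tfrac{\mu_{\mV}}{2}\|u_k-u^*\|_{\IV}^2+\tfrac{\mu_{\mQ}}{2}\|p_k-p^*\|_{\IQ}^2+\tfrac{\mu_{f,\IV}}{2}\|v_k-v^*\|_{\IV}^2\Bigr],
\end{equation*}
where I extract the three individual summands appearing in the proof of Theorem \ref{Continuous strong Lyapunov for transformed gradient flow}, not only the compressed form $\mu\,\mathcal E$. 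Keeping the $\|v_k-v^*\|_{\IV}^2$ term explicitly is crucial because the $\alpha_k^2$ term will produce a matching contribution.

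For the second-order term, since $\mathcal G(u^*,p^*)=0$, Lemma \ref{Lipschitz continuity of flow} together with the inequality $(a+b)^2\le 2a^2+2b^2$ gives
\begin{equation*}
\|\mathcal G^u\|_{\IV}^2+\|\mathcal G^p\|_{\IQ}^2\le L_{\mV}^2\|u_k-u^*\|_{\IV}^2+L_{\mQ}^2\|p_k-p^*\|_{\IQ}^2+2\|v_k-v^*\|_{\IV}^2,
\end{equation*}
with $L_{\mV}^2,L_{\mQ}^2$ as in Table \ref{table:constants}. Adding this $\alpha_k^2$ term to the previous bound produces three coefficients that we need to sign-control: $\alpha_k(\mu_{\mV}-\alpha_k L_{\mV}^2)$ on $\|u_k-u^*\|_{\IV}^2$, $\alpha_k(\mu_{\mQ}-\alpha_k L_{\mQ}^2)$ on $\|p_k-p^*\|_{\IQ}^2$, and $\alpha_k(\mu_{f,\IV}-2\alpha_k)$ on the auxiliary $\|v_k-v^*\|_{\IV}^2$. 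The step-size restriction $\alpha_k<\min\{\mu_{\mV}/L_{\mV}^2,\mu_{\mQ}/L_{\mQ}^2,\mu_{f,\IV}/2\}$ keeps each coefficient non-negative, so the $v$-term can be dropped and I recover the claimed rate $\delta_k=\min\{\alpha_k(\mu_{\mV}-\alpha_k L_{\mV}^2),\alpha_k(\mu_{\mQ}-\alpha_k L_{\mQ}^2)\}$. The special value $\alpha_k=\tfrac12\min\{\mu_{\mV},\mu_{\mQ}\}/\max\{L_{\mV}^2,L_{\mQ}^2,2\}$ then reduces each parenthesis to half its leading term, and a direct calculation yields $\delta_k\ge 1/(4\kappa^2)$ with $\kappa$ as stated.

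The one mildly delicate point to watch will be ensuring that the $\|v_k-v^*\|_{\IV}^2$ contribution from the dissipation term really dominates the one produced by $\|\mathcal G^u\|_{\IV}^2$; this is why the constraint $\alpha_k\le\mu_{f,\IV}/2$ appears, and why $2$ appears inside the maximum defining $\kappa_{\mV}$. Everything else is a routine per-coordinate comparison of two quadratic forms in $\alpha_k$.
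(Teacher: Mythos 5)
Your proposal is correct and follows essentially the same route as the paper's proof: the exact quadratic expansion of $\mathcal E$, the strong Lyapunov property kept in its three-term form (including the $\|v_k-v^*\|_{\IV}^2$ contribution), and Lemma \ref{Lipschitz continuity of flow} with $(a+b)^2\le 2a^2+2b^2$ to absorb the $O(\alpha_k^2)$ overshoot, leading to the same three sign conditions and step-size restrictions. The only discrepancy is an inconsequential factor of $\tfrac12$ in the coefficient you report for the $\|v_k-v^*\|_{\IV}^2$ term, which does not affect the sign condition $\alpha_k\le \mu_{f,\IV}/2$ or the conclusion.
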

\begin{proof}
Since $\mathcal{E}(u,p)$ is quadratic and convex, we have
\begin{equation}\label{convexity of E(u,p)}
    \begin{aligned}
    \mathcal{E}(u_{k+1}, p_{k+1}) - \mathcal{E}(u_k, p_k) ={} & \langle \partial_u  \mathcal{E}(u_k, p_k) , u_{k+1} - u_k \rangle + \frac{1}{2} \|u_{k+1}-u_k\|^2_{\IV} \\
    &+ \langle \partial_p  \mathcal{E}(u_k, p_k) , p_{k+1} - p_k \rangle + \frac{1}{2} \|p_{k+1}-p_{k}\|^2_{\IQ}.    
    \end{aligned}
\end{equation}
By formulation~\eqref{EE discretization} and the strong Lyapunov property established in Theorem \ref{Continuous strong Lyapunov for transformed gradient flow},
\begin{equation}\label{continuous level result}
    \begin{aligned}
   &\langle \partial_v  \mathcal{E}(u_k, p_k) , u_{k+1} - u_k \rangle + \langle \partial_p  \mathcal{E}(u_k, p_k) , p_{k+1} - p_k \rangle \\
 ={}  & \langle \nabla \mathcal{E}(u_k, p_k), \alpha_k\mathcal{G}(u_k, p_k)  \rangle \\
\leq {}&-\frac{\alpha_k\mu_{\mV}}{2} \|u_k-u^*\|^2_{\IV} - \frac{\alpha_k \mu_{\mQ}}{2} \|p_k-p^*\|^2_{\IQ} - \frac{\alpha_k \mu_{f,\IV}}{2}\| v_k - v^* \|_{\IV}^2.
    \end{aligned}
\end{equation}

By the Lipschitz continuity of the flow, cf. Lemma \ref{Lipschitz continuity of flow},
\begin{equation}\label{quadratic terms}
    \begin{aligned}
   &\frac{1}{2}\|u_{k+1}-u_k\|^2_{\IV}+ \frac{1}{2} \|p_{k+1}-p_{k}\|^2_{\IQ}\\
   ={}&\frac{\alpha_k^2}{2}\left (\|\mathcal{G}^u(u_k,p_k) - \mathcal{G}^u(u^*,p^*)\|^2_{\IV} + \|\mathcal{G}^p(u_k,p_k) - \mathcal{G}^p(u^*,p^*)\|^2_{\IQ}\right ) \\
    \leq{}& \frac{\alpha_k^2L_{\mV}^2}{2} \|u_k-u^*\|^2_{\IV} + \frac{\alpha_k^2L_{\mQ}^2}{2}\|p_k-p^*\|^2_{\IQ} + \alpha_k^2 \| v_k - v^*\|^2.
    \end{aligned}
\end{equation}
Summing~\eqref{continuous level result} and~\eqref{quadratic terms},
\begin{equation*}
    \begin{aligned}
    \mathcal{E}(u_{k+1}, p_{k+1}) - \mathcal{E}(u_k, p_k) \leq &  
 -\alpha_k\left (\mu_{\mV} - \alpha_k L_{\mV}^2 \right) \frac{1}{2} \|u_k-u^*\|^2_{\IV} \\
    &- \alpha_k\left (\mu_{\mQ} - \alpha_k L_{\mQ}^2 \right) \frac{1}{2}\|p_k-p^*\|^2_{\IQ}\\
   &- \alpha_k (\mu_{f,\IV}/2 - \alpha_k )  \| v_k - v^*\|^2.
\end{aligned}
\end{equation*}
Then the results follows by rearrangement of the inequality and bound of the quadratic polynomial of $\alpha_k$. 
\end{proof}

We can always rescale the function $f$ or $\IV$ so that $L_{f, \IV} \leq 1$ and consequently $L_{e,\IV} < 1$. We can also rescale $\IQ$ so that $\lambda_{\max} \left (\IQ^{-1}B\IV^{-1}B^\top \right ) \leq 1$. Consequently $L_{\mV}^2 \leq 4$ and $L_{\mQ}^2 = O(L^2_{g,\IQ} + 1)$. Theorem \ref{convergence rate for EE discretization} shows the convergence rate is determined by the condition number $\kappa_{\mV} = O(\kappa_{f,\IV})$ and $\kappa_{\mQ} =  O(\kappa(\IQ^{-1}B\IV^{-1}B^\top ))$ which in turn depends crucially on choices of $\IV$ and $\IQ$. 

Both $\IV$ and $\IQ$ can be scalars, then~\eqref{eq:GpUzawa} is an explicit first order method with linear convergence rate. However, in this case, when either $\kappa(f)$ or $\kappa(BB^{\top})$ is large, the convergence will be very slow \correction{since the rate is degenerate like $1-c/\kappa^2$}.

We can choose an SPD matrix $\IV$ to make $f$ better conditioned.  
%
%
As $g$ is convex only, i.e., $\mu_g$ might be zero, the convexity $\mu_{\mQ}\geq \lambda_{\min} \left (\IQ^{-1}B\IV^{-1}B^\top \right )$. 
In the ideal case, we choose $\IQ^{-1} = (B\IV^{-1}B^\top)^{-1}$ and then $\mu_{\mQ} = 1 + \mu_g$ but in practice $ (B\IV^{-1}B^\top)^{-1}$ may not be able to be computed efficiently. When $\IV^{-1} = A^{-1}$ is dense, even the Schur complement $B\IV^{-1}B^\top$ may not be formed explicitly. Without a priori information on the Schur complement, it is hard to choose $\IQ$ to make $\kappa_{\mQ}$ small. A scalar $\IQ$ will lead to $\kappa_{\mQ} = \kappa( B\IV^{-1}B^\top)$ which competes with $\kappa_{f,\IV}$.

\correction{After choosing $\IV$ and $\IQ$, the optimal step size is the $\alpha_k$ that reaching the upper bound of quadratic functions to determine $\delta_k$. If the convexity constants $\mu$'s and the Lipschitz constants of gradients $L$'s are given (or can be estimated), then Theorem \ref{convergence rate for EE discretization} gives analytical guidance for choosing the step size.  In practice, one can start from $\alpha_k=1$ and decrease the step size with a fixed ratio, e.g. $1/2$, until the residual is reduced.}

\subsection{Implicit-Explicit Methods}\label{sec: IMEX scheme}
For the explicit scheme, the step size should be small enough and the convergence rate is $1-c/\kappa^2$ which is very slow if either $\kappa_{\mV}$ or $\kappa_{\mQ}$ is large. Can we \correction{enlarge the step size and }accelerate this linear rate? 

\correction{One way} is to apply the Implicit-Explicit (IMEX) scheme for solving the TPD flow~\eqref{eq: transformed primal-dual flow}. Given an initial $(u_0, p_0)$, for $k=0,1,\ldots ,$ update $(u_{k+1}, p_{k+1})$ as follows:
\begin{equation}\label{IMEX discretization}
\left\{\begin{array}{l}\begin{aligned}
p_{k+1}  &= p_k +\alpha_k \mathcal G^p(u_k, p_k), \\
u_{k+1} &= u_k+\alpha_k  \mathcal G^u(u_{k+1}, p_{k+1}). 
\end{aligned}\end{array}\right.
\end{equation}
\correction{That is,} we update $p$ by the explicit Euler method and solve $u$ by the implicit Euler method. 
Again we can view~\eqref{IMEX discretization} as a correction to the inexact Uzawa method
\begin{equation}\label{IMEX algorithm}
\left\{\begin{array}{l}\begin{aligned}
u_{k+1/2} &= u_k - \IV^{-1}(\nabla f(u_k) + B^\top p_{k}),\\ 
p_{k+1}  &= p_k - \alpha_k \IQ^{-1}\left(\nabla g(p_k) - B u_{k+1/2} \right), \\
u_{k+1} &= \arg \min_{u \in \mV} f(u) + \frac{1}{2\alpha_k} \|u - u_k + \alpha_k\IV^{-1}B^\top p_{k+1}\|^2_{\IV}. 
\end{aligned}\end{array}\right.
\end{equation}
After one inexact Uzawa iteration, $u_{k+1}$ is obtained by solving a strongly convex optimization problem of $u$. When $\IV = L_f I_m$, the last step is one proximal iteration
$$
u_{k+1} = {\rm prox}_{ f, \frac{\alpha_k}{L_f}}  \left (u_k - \frac{\alpha_k}{L_f} B^\top p_{k+1}\right).
$$
%

\correction{We can also use IMEX schemes with updating $u$ first with proximal iteration and $p$ later using $u_{k+1}-u_k$. Specific $\IQ = \frac{1}{r}BB^{\top} + \delta I$ is discussed in~\cite{he2021balanced} where $\IV = r I$ with arbitrary $r >0$  and step size $\alpha_k =1$ is allowed. Our analysis is unified for general $\IV$ and $\IQ$ using the Lyapunov function.} Compared with the explicit scheme, the IMEX scheme enjoys accelerated linear convergence rates.
\begin{theorem}\label{convergence rate for IMEX discretization}
Suppose $f(u) \in \mathcal{S}_{\mu_{f,\IV}, L_{f,\IV} }$  with $0< \mu_{f,\IV} \leq L_{f, \IV}< 2$. Let $(u_k, p_k)$ follows the IMEX scheme~\eqref{IMEX algorithm} for the TPD flow with initial value $(u_0, p_0)$. For the Lyapunov function defined by~\eqref{eq: Lyapunov}, it holds that
$$\mathcal{E}(u_{k+1}, p_{k+1}) \leq \frac{1}{1+\alpha_k \mu_k}\mathcal{E}(u_k, p_k),$$
for $0 < \alpha_k < \displaystyle \mu_{\mQ}/L_{S,\mQ}^2$ and $\mu_k = \min \left \{\mu_{\mV} , \mu_{\mQ} - \alpha_kL_{S,\mQ}^2\right\}.$
In particular, for $\alpha_k = 0.5 \mu_{\mQ}/ L_{S,\mQ}^2$, we have 
$$\mathcal{E}(u_{k+1}, p_{k+1}) \leq \frac{1}{1+ 0.5 \mu_{\mQ} \min \{\mu_{\mV}, \mu_{\mQ}/2\} / L_{S,\mQ}^2 }\mathcal{E}(u_k, p_k).$$
\end{theorem}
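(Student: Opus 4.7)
The plan is to mirror the implicit-Euler proof of Theorem~\ref{convergence rate for IE discretization} but with one additional term that accounts for the fact that $p_{k+1}$ was obtained from $\mathcal{G}^p$ evaluated at the old iterate $(u_k,p_k)$ rather than at the new one $(u_{k+1},p_{k+1})$. The critical structural observation is that the identity form of the Lyapunov change (rather than the convexity inequality) must be used, because the leftover $-\tfrac12\|u_{k+1}-u_k\|_{\IV}^2 - \tfrac12\|p_{k+1}-p_k\|_{\IQ}^2$ produced by expanding the quadratic $\mathcal E$ is needed to absorb the quadratic slack generated by Young's inequality on the correction term.

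First I would write the exact quadratic expansion of $\mathcal E$ at the new iterate and substitute the IMEX updates $u_{k+1}-u_k = \alpha_k\mathcal{G}^u(u_{k+1},p_{k+1})$, $p_{k+1}-p_k = \alpha_k\mathcal{G}^p(u_k,p_k)$. Then adding and subtracting $\mathcal{G}^p(u_{k+1},p_{k+1})$ in the $p$-gradient term decomposes the linear part as
\begin{equation*}
\alpha_k \nabla\mathcal E(u_{k+1},p_{k+1})\cdot\mathcal{G}(u_{k+1},p_{k+1}) + \alpha_k\langle \partial_p\mathcal E(u_{k+1},p_{k+1}),\mathcal{G}^p(u_k,p_k)-\mathcal{G}^p(u_{k+1},p_{k+1})\rangle.
\end{equation*}
The first piece is controlled by the strong Lyapunov inequality~\eqref{eq: strong lyapunov in continuous level} applied at $(u_{k+1},p_{k+1})$, which yields an upper bound of $-\alpha_k[\tfrac{\mu_{\mV}}{2}\|u_{k+1}-u^*\|_{\IV}^2 + \tfrac{\mu_{\mQ}}{2}\|p_{k+1}-p^*\|_{\IQ}^2 + \tfrac{\mu_{f,\IV}}{2}\|v_{k+1}-v^*\|_{\IV}^2]$.

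The delicate step is the correction. Writing $\partial_p\mathcal E(u_{k+1},p_{k+1}) = \IQ(p_{k+1}-p^*)$ and squaring the Lipschitz bound of Lemma~\ref{Lipschitz continuity of flow} via the sharp Cauchy--Schwarz inequality $(L_{e,\IV}L_S\,a + L_{h_B,\IQ}\,b)^2 \leq (L_{e,\IV}^2L_S^2 + L_{h_B,\IQ}^2)(a^2+b^2) = L_{S,\mQ}^2(a^2+b^2)$ gives
\begin{equation*}
\|\mathcal{G}^p(u_k,p_k)-\mathcal{G}^p(u_{k+1},p_{k+1})\|_{\IQ}^2 \leq L_{S,\mQ}^2\bigl(\|u_{k+1}-u_k\|_{\IV}^2 + \|p_{k+1}-p_k\|_{\IQ}^2\bigr).
\end{equation*}
Applying Young's inequality to the correction with the balanced weight $\epsilon = 1/(\alpha_k L_{S,\mQ}^2)$ produces exactly $\tfrac{\alpha_k^2 L_{S,\mQ}^2}{2}\|p_{k+1}-p^*\|_{\IQ}^2$ on the slow side and exactly $\tfrac12(\|u_{k+1}-u_k\|_{\IV}^2+\|p_{k+1}-p_k\|_{\IQ}^2)$ on the fast side. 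The fast-side piece cancels the two quadratic remainders from the Lyapunov identity, which is the whole point of using the identity rather than the inequality.

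Collecting everything, the $\|v_{k+1}-v^*\|_{\IV}^2$ term is non-negative and can be discarded, and the remaining estimate reads
\begin{equation*}
\mathcal E(u_{k+1},p_{k+1}) - \mathcal E(u_k,p_k) \leq -\alpha_k\frac{\mu_{\mV}}{2}\|u_{k+1}-u^*\|_{\IV}^2 - \alpha_k\frac{\mu_{\mQ}-\alpha_k L_{S,\mQ}^2}{2}\|p_{k+1}-p^*\|_{\IQ}^2.
\end{equation*}
Provided $\alpha_k < \mu_{\mQ}/L_{S,\mQ}^2$, the coefficient of $\|p_{k+1}-p^*\|_{\IQ}^2$ is positive, and bounding the right-hand side by $-\alpha_k\mu_k\,\mathcal E(u_{k+1},p_{k+1})$ with $\mu_k = \min\{\mu_{\mV},\mu_{\mQ}-\alpha_k L_{S,\mQ}^2\}$ rearranges into $(1+\alpha_k\mu_k)\mathcal E(u_{k+1},p_{k+1}) \leq \mathcal E(u_k,p_k)$, which is the claim. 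The specific choice $\alpha_k = 0.5\,\mu_{\mQ}/L_{S,\mQ}^2$ equalizes $\mu_{\mQ}-\alpha_k L_{S,\mQ}^2 = \mu_{\mQ}/2$ and plugs into $\mu_k = \min\{\mu_{\mV},\mu_{\mQ}/2\}$, giving the stated explicit rate. The main obstacle to anticipate is just the precise Young weight that makes the quadratic cancellation line up; once that is identified, the rest is bookkeeping.
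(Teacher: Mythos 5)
Your proposal is correct and follows essentially the same route as the paper: expand the quadratic $\mathcal E$ exactly at $(u_{k+1},p_{k+1})$ so the negative remainders $-\tfrac12\|u_{k+1}-u_k\|_{\IV}^2-\tfrac12\|p_{k+1}-p_k\|_{\IQ}^2$ survive, split off the mismatch $\mathcal G^p(u_k,p_k)-\mathcal G^p(u_{k+1},p_{k+1})$, invoke the strong Lyapunov property \eqref{eq: strong lyapunov in continuous level} at the new iterate, and absorb the mismatch via Young's inequality so that the cost is $\tfrac{\alpha_k^2 L_{S,\mQ}^2}{2}\|p_{k+1}-p^*\|_{\IQ}^2$ plus terms cancelled by the quadratic remainders. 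The only cosmetic difference is that you apply a single Cauchy--Schwarz/Young step with the combined constant $L_{S,\mQ}^2$ where the paper splits the correction into two separately weighted Young inequalities (weights $L_{h_B,\IQ}^2$ and $L_{e,\IV}^2L_S^2$), which yields the identical bound.
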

\begin{proof}
Since $\mathcal{E}(u,p)$ is quadratic and convex, we have
\begin{equation}\label{eq: convexity of E(u,p) at k+1}
\begin{aligned}
    &\mathcal{E}(u_{k+1}, p_{k+1}) - \mathcal{E}(u_k, p_k) \\
    ={} & \langle \partial_u  \mathcal{E}(u_{k+1}, p_{k+1}) , u_{k+1} - u_k \rangle - \frac{1}{2} \|u_{k+1}-u_k\|^2_{\IV} \\
    &+ \langle \partial_p  \mathcal{E}(u_{k+1}, p_{k+1}) , p_{k+1} - p_k \rangle - \frac{1}{2} \|p_{k+1}-p_{k}\|^2_{\IQ}.    
    \end{aligned}
\end{equation}

We will use the strong Lyapunov property at $(u_{k+1}, p_{k+1})$ but the component $\mathcal G^p(u_k,p_k)$ is evaluated at $(u_k, p_k)$. Compared with the implicit scheme, there are some mis-match terms from the explicit step for $p$:
\begin{equation}\label{eq: continuous level result 2}
    \begin{aligned}
   &\langle \partial_u  \mathcal{E}(u_{k+1}, p_{k+1}) , u_{k+1} - u_k \rangle + \langle \partial_p  \mathcal{E}(u_{k+1}, p_{k+1}) , p_{k+1} - p_k \rangle \\
   ={} & \langle \nabla  \mathcal{E}(u_{k+1}, p_{k+1}), \alpha_k\mathcal{G}(u_{k+1}, p_{k+1})  \rangle \\
   & +  \alpha_k \langle p_{k+1}-p^*,   \nabla g_B(p_{k+1})- \nabla  g_B(p_k) + B \left (e(u_k) - e(u_{k+1} \right) \rangle\\
 \leq &-\frac{\alpha_k\mu_{\mV}}{2} \|u_{k+1}-u^*\|^2_{\IV} -\frac{\alpha_k \mu_{\mQ}}{2} \|p_{k+1}-p^*\|^2_{\IQ} \\
   & +  \alpha_k \langle p_{k+1}-p^*,   \nabla g_B(p_{k+1})- \nabla  g_B(p_k) + B \left (e(u_k) - e(u_{k+1} \right) \rangle.
    \end{aligned}
\end{equation}
We use \correction{Cauchy-Schwarz} inequality to bound the mis-match terms in~\eqref{eq: continuous level result 2}:
$$
    \begin{aligned}
    & \alpha_k \langle p_{k+1}-p^*,   \nabla g_B(p_{k+1})- \nabla  g_B(p_k) + B \left (e(u_k) - e(u_{k+1}) \right) \rangle \\
     \leq{} & \frac{\alpha_k^2}{2} \left (L_{e,\IV}^2L_S^2 + L^2_{g_B, \IQ} \right) \| p_{k+1}-p^*\|^2_{\IQ} + \frac{1}{2L_{g_B, \IQ}^2}\| \nabla g_B(p_{k+1})- \nabla  g_B(p_k)\|_{\IQ^{-1}}^2\\
     &+ \frac{1}{2L_{e,\IV}^2L_S^2} \|B \left (e(u_{k+1}) - e(u_k)\right)\|^2_{\IQ^{-1}} \\
    \leq{} & \frac{\alpha_k^2}{2}L_{S,\mQ}^2 \| p_{k+1}-p^*\|^2_{\IQ} +  \frac{1}{2} \|p_{k+1} - p_k\|^2_{\IQ} +\frac{1}{2} \|u_{k+1} - u_k\|^2_{\IV}.
    \end{aligned}
$$    
Use the negative terms in~\eqref{eq: convexity of E(u,p) at k+1}, we obtain
\begin{equation*}
    \begin{aligned}
    &\mathcal{E}(u_{k+1}, p_{k+1}) - \mathcal{E}(u_k, p_k) \\
    \leq &   -\frac{\alpha_k\mu_{\mV}}{2} \|u_{k+1}-u^*\|^2_{\IV} -\frac{1}{2}\alpha_k \left ( \mu_{\mQ} - \alpha_kL_{S,\mQ}^2 \right) \|p_{k+1} - p^*\|^2_{\IQ}.
\end{aligned}
\end{equation*}
Then the results follows by rearrangement of the inequality and bound of the quadratic polynomial of $\alpha_k$. 

%
\end{proof}

Let us discuss the rate with assumption $\lambda_{\max} \left (\IQ^{-1}B\IV^{-1}B^\top \right ) \leq 1$ and $\mu_{\mV}\leq  \mu_{\mQ}/2$. Theorem \ref{convergence rate for IMEX discretization} shows the convergence rate of the IMEX scheme is $\left( 1+ c\mu_{\mQ}\mu_{\mV}\right )^{-1}$. When both $\mu_{\mQ}$ and $\mu_{\mV}$ are small, the linear rate is still in the quadratic dependence of condition numbers. The improvement is that if we can choose $\IQ$ such that \correction{$\mu_{\mQ}\gg \mu_{\mV}$}, then we achieve the accelerated rate $(1+ c/\kappa_{\mV})^{-1}$. While for the explicit scheme, even $\kappa_{\mQ}$ is small, the rate is still \correction{worse than $1- c/\max^2\{\kappa_{\mV},\kappa_{\mQ}\} = 1-c/\kappa_{\mV}^2$}. 

Augmented Lagrangian can be viewed as a preconditioning of the Schur complement so that a simple $\IQ^{-1} = \beta I_n$ will lead to a well conditioned $\kappa_{\mQ}$; see Section \ref{sec:ALM} for details.

\correction{The largest step size $\alpha_k$ is still in the order of $\mu_{\mQ}$. As $u$ is treat implicitly, there is no restriction of the step size from $\mu_{\mV}$. In Section \ref{sec:GS} we shall propose an explicit method with enlarged step size and accelerated convergence rate.}

\subsection{Inexact inner solvers}\label{Inexact inner solvers} For those TPD iterations, the most time consuming part is the inner solver for sub-problems. For the explicit scheme~\eqref{EE discretization}, that is the linear operators $\IV^{-1}$ and $\IQ^{-1}$. For example, when $\IV = L_f I$, if we treat $ L_f(BB^{\top})^{-1}$ as the ideal exact inner solve, then $\kappa_{\mQ} = 1$. A general $\IQ^{-1}$ can be \correction{treated} as an inexact inner solver and the inexactness enters the estimate by $\lambda_{\min}\left (\IQ^{-1}B\IV^{-1}B^\top \right )$.  

 \correction{For the IMEX scheme, the sub-problem  in the third step of~\eqref{IMEX algorithm} is a strongly convex optimization problem.} In this part, we derive the perturbation analysis for inexact inner solvers for this sub-problem. 


Define the modified objective function for this sub-problem
\begin{equation}\label{inner solve problem}
    \tilde f(u; u_k, p_{k+1}) =  f(u) + \frac{1}{2\alpha_k} \|u - u_k + \alpha_k \IV^{-1}B^\top p_{k+1}\|^2_{\IV}, 
\end{equation}
the inexactness of the inner solve is measured by $\|\nabla \tilde f(u)\|^2.$

\begin{theorem}\label{thm: convergence of inexact IMEX TPD}
Suppose $f(u) \in \mathcal{S}_{\mu_{f,\IV}, L_{f,\IV} } $  with $0< \mu_{f,\IV} \leq L_{f, \IV}< 2$. Suppose $(u_k, p_k)$ follows the inexact IMEX iteration~\eqref{IMEX algorithm} with initial value $(u_0, p_0)$ and the inexact inner solver returns $u_{k+1}$
satisfying $\|\nabla \tilde f(u_{k+1})\|^2_{\IV^{-1}} \leq \epsilon_k$ for $k = 1,2, \cdots$. Then for the Lyapunov function defined by~\eqref{eq: Lyapunov}, it holds that
$$\mathcal{E}(u_{k+1}, p_{k+1}) \leq \frac{1}{1+\alpha_k \mu_k}\mathcal{E}(u_k, p_k)+ \frac{\correction{\alpha_k}}{(1+\alpha_k \mu_k)\mu_{\mV}}\epsilon_k ,$$
for $0 < \alpha_k < \displaystyle \mu_{\mQ}/L_{S,\mQ}^2$ and $\mu_k = \min \left \{\mu_\mV/2, \mu_{\mQ} - \alpha_kL_{S,\mQ}^2 \right \}$.
In particular, for $\alpha_k = \mu_{\mQ}/ 2L_{S,\mQ}^2$, the accumulative perturbation error for the inexact solve is
$$\mathcal{E}(u_{n+1}, p_{n+1}) \leq \rho^{n+1}\mathcal{E}(u_0, p_0)+ \frac{\mu_{\mQ}}{\correction{2}\mu_{\mV}L_{S,\mQ}^2}\sum_{k=0}^n \rho^{n-k+1} \epsilon_k ,$$
where $\mu = \min\{\mu_\mV, \mu_\mQ \}$ and $\rho =1/(1+ \mu_{\mQ} \mu/ 4L_{S,\mQ}^2)\in (0,1)$.

\end{theorem}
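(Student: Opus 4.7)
The plan is to import the proof of Theorem~\ref{convergence rate for IMEX discretization} almost verbatim, treating the inexact inner solver as a perturbation of the exact implicit $u$-step. First I would use the first-order expression for the strongly convex subproblem~\eqref{inner solve problem}, namely $\nabla \tilde f(u;u_k,p_{k+1}) = \nabla f(u) + B^\top p_{k+1} + \tfrac{1}{\alpha_k}\IV(u-u_k)$, to rewrite the condition $\nabla \tilde f(u_{k+1}) = r_k$ as
\[
u_{k+1} - u_k \;=\; \alpha_k\,\mathcal{G}^u(u_{k+1}, p_{k+1}) \;+\; \alpha_k\,\IV^{-1} r_k, \qquad \|r_k\|_{\IV^{-1}}^2 \leq \epsilon_k.
\]
This exhibits the inexact IMEX step as the exact implicit $u$-update plus an additive perturbation $\alpha_k \IV^{-1} r_k$.

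I would then replay the quadratic expansion of $\mathcal{E}(u_{k+1},p_{k+1}) - \mathcal{E}(u_k,p_k)$ used in Theorem~\ref{convergence rate for IMEX discretization} and substitute the perturbed step. The contribution of the exact part $\alpha_k \mathcal{G}^u(u_{k+1},p_{k+1})$, combined with the $p$-update, reproduces precisely the exact-IMEX analysis: the strong Lyapunov inequality~\eqref{eq: strong lyapunov in continuous level} evaluated at $(u_{k+1},p_{k+1})$ together with the Young-type cancellation of the mis-match term $\alpha_k\langle p_{k+1}-p^*,\nabla h_B(p_{k+1})-\nabla h_B(p_k)+B(e(u_k)-e(u_{k+1}))\rangle$ against the two negative quadratics $-\tfrac{1}{2}\|u_{k+1}-u_k\|_\IV^2-\tfrac{1}{2}\|p_{k+1}-p_k\|_\IQ^2$ only uses Lipschitz continuity of $e$ and $\nabla h_B$, and is therefore insensitive to the inexactness. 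The only genuinely new term is $\alpha_k\langle \IV(u_{k+1}-u^*), \IV^{-1} r_k\rangle = \alpha_k\langle u_{k+1}-u^*, r_k\rangle$, which I would bound by Young's inequality as
\[
\alpha_k\langle u_{k+1}-u^*, r_k\rangle \;\leq\; \tfrac{\alpha_k \mu_{\mV}}{4}\|u_{k+1}-u^*\|_\IV^2 \;+\; \tfrac{\alpha_k}{\mu_{\mV}}\|r_k\|_{\IV^{-1}}^2.
\]

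Absorbing the quadratic half into the $\tfrac{\alpha_k\mu_{\mV}}{2}\|u_{k+1}-u^*\|_\IV^2$ dissipation halves the effective primal convexity, producing $\mu_k = \min\{\mu_{\mV}/2,\,\mu_{\mQ}-\alpha_k L_{S,\mQ}^2\}$ and the one-step recurrence
\[
(1+\alpha_k\mu_k)\,\mathcal{E}(u_{k+1},p_{k+1}) \;\leq\; \mathcal{E}(u_k,p_k) \;+\; \tfrac{2\alpha_k}{\mu_{\mV}}\epsilon_k,
\]
which gives the first displayed bound after dividing by $1+\alpha_k\mu_k$. For the accumulated error I would fix $\alpha_k = \mu_{\mQ}/(2L_{S,\mQ}^2)$ so that $\mu_k = \mu/2$ with $\mu = \min\{\mu_{\mV},\mu_{\mQ}\}$ and $\rho = 1/(1+\mu_{\mQ}\mu/(4L_{S,\mQ}^2)) \in (0,1)$, then iterate $\mathcal{E}_{k+1} \leq \rho\,\mathcal{E}_k + \rho\,\tfrac{2\alpha_k}{\mu_{\mV}}\epsilon_k$; the standard telescoping yields the claimed sum with factor $\rho^{n-k+1}$ and prefactor $\tfrac{2\alpha_k}{\mu_{\mV}} = \tfrac{\mu_{\mQ}}{\mu_{\mV} L_{S,\mQ}^2}$.

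The main obstacle is verifying carefully that the perturbation $\alpha_k \IV^{-1} r_k$ inside $u_{k+1}-u_k$ does not introduce additional unabsorbed $\epsilon_k$-errors through the terms $-\tfrac{1}{2}\|u_{k+1}-u_k\|_\IV^2$ and the Young bound of the IMEX mis-match: both are quadratic in the full increment $u_{k+1}-u_k$, so the inexactness enters symmetrically on the two sides of the cancellation and cannot produce a net error. Once this is checked, $\alpha_k\langle u_{k+1}-u^*, r_k\rangle$ is the unique surviving new term and the estimate reduces to one clean application of Young's inequality followed by telescoping.
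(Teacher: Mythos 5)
Your proposal is correct and follows essentially the same route as the paper's own proof: rewrite the inexact implicit $u$-step as $u_{k+1}-u_k=\alpha_k\bigl(\mathcal G^u(u_{k+1},p_{k+1})+\IV^{-1}\nabla\tilde f(u_{k+1})\bigr)$, reuse the exact IMEX estimate (whose mis-match cancellation is indeed quadratic in the actual increments and hence unaffected by the residual), and absorb the single new term $\alpha_k\langle u_{k+1}-u^*,\nabla\tilde f(u_{k+1})\rangle$ by Young's inequality against half of the primal dissipation, giving $\mu_k=\min\{\mu_{\mV}/2,\,\mu_{\mQ}-\alpha_kL_{S,\mQ}^2\}$ and the telescoped bound. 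Your Young constant $\alpha_k/\mu_{\mV}$ is even slightly sharper than the paper's $2\alpha_k/\mu_{\mV}$, so the stated estimate follows a fortiori.
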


\begin{proof}
By definition~\eqref{inner solve problem},
\begin{equation*}
\nabla \tilde f(u_{k+1}) = \nabla f(u_{k+1}) + \frac{1}{\alpha_k} \left (\IV u_{k+1} - \IV u_k +\alpha_k B^\top p_{k+1} \right),
\end{equation*}
we can write
\begin{equation*}
\begin{aligned}
u_{k+1} - u_k  &= \alpha_k \IV^{-1} \left ( \nabla \tilde f(u_{k+1}) - \nabla f(u_{k+1})-  B^\top p_{k+1} \right )  \\
& = \alpha_k \left ( \IV^{-1} \nabla \tilde f(u_{k+1}) + \mathcal G^u (u_{k+1}, p_{k+1}) \right ).
\end{aligned}
\end{equation*}

We use the strong Lyapunov property at $(u_{k+1}, p_{k+1})$ but compared with~\eqref{eq: continuous level result 2} , we have an additional gradient term due to the inexact inner solve:
\begin{equation*}
    \begin{aligned}
    &\mathcal{E}( u_{k+1}, p_{k+1}) - \mathcal{E}(u_k, p_k) \\
    ={} & \langle \partial_u  \mathcal{E}(u_{k+1}, p_{k+1}) , u_{k+1} -  u_k \rangle - \frac{1}{2} \| u_{k+1}- u_k\|^2_{\IV} \\
    &+ \langle \partial_p  \mathcal{E}( u_{k+1}, p_{k+1}) , p_{k+1} - p_k \rangle - \frac{1}{2} \|p_{k+1}-p_{k}\|^2_{\IQ}  \\
    \leq{} & \langle \partial_u  \mathcal{E}(u_{k+1}, p_{k+1}) , \alpha_k \mathcal G^u(u_{k+1}, p_{k+1}) \rangle+ \langle \partial_p  \mathcal{E}( u_{k+1}, p_{k+1}) , \alpha_k \mathcal G^p(u_{k}, p_{k}) \rangle  \\
    &- \frac{1}{2} \| u_{k+1}- u_k\|^2_{\IV} - \frac{1}{2} \|p_{k+1}-p_{k}\|^2_{\IQ}+ \langle \partial_u  \mathcal{E}(u_{k+1}, p_{k+1}) , \alpha_k \IV^{-1}\nabla \tilde f(u_{k+1}) \rangle  \\
    \leq{}& -\frac{\alpha_k\mu_{\mV}}{4} \|u_{k+1}-u^*\|^2_{\IV} -\frac{1}{2}\alpha_k \left ( \mu_{\mQ} - \alpha_kL_{S,\mQ}^2 \right) \|p_{k+1} - p^*\|^2_{\IV} + \correction{\frac{\alpha_k}{\mu_{\mV}}} \| \nabla\tilde f(u_{k+1})\|_{\IV^{-1}}^2,
    \end{aligned}
\end{equation*}
where the last inequality holds from Theorem \ref{convergence rate for IMEX discretization} and by Cauchy-Schwarz inequality
\begin{equation*}
    \begin{aligned}
    \langle \partial_u  \mathcal{E}(u_{k+1}, p_{k+1}) , \alpha_k \IV^{-1}\nabla \tilde f(u_{k+1}) \rangle &= \langle  \IV \left (u_{k+1} - u^* \right ), \alpha_k \IV^{-1}\nabla \tilde f(u_{k+1}) \rangle\\
    &\leq \frac{\alpha_k\mu_{\mV}}{4} \|u_{k+1}-u^*\|^2_{\IV} + \correction{\frac{\alpha_k}{\mu_{\mV}}} \|\nabla \tilde f(u_{k+1})\|_{\IV^{-1}}^2.
    \end{aligned}
\end{equation*}
Since the inexact solver terminates until $\|\nabla \tilde f(u_{k+1})\|^2_{\IV^{-1}} < \epsilon_k$, we have
\begin{equation*}
    \begin{aligned}
    \mathcal{E}( u_{k+1}, p_{k+1}) - \mathcal{E}(u_k, p_k) \leq -\alpha_k \mu_k \mathcal{E}( u_{k+1}, p_{k+1}) + \frac{\alpha_k\epsilon_k}{\mu_{\mV}} 
    \end{aligned}
\end{equation*}
with $\mu_k = \min \left \{\mu_\mV/2, \mu_{\mQ} - \alpha_kL_{S,\mQ}^2 \right \}$ and the accumulated error is straight forward.
\end{proof}

For $\alpha = \alpha_k = \mu_{\mQ}/ 2L_{S,\mQ}^2$ and $\correction{\epsilon_k \leq  \mu \mu_{\mV}\epsilon}$ for some $\epsilon > 0$, the accumulated perturbation error 
$$\frac{\mu_{\mQ}}{\correction{2}\mu_{\mV}L_{S,\mQ}^2}\sum_{k=0}^n \rho^{n-k+1} \epsilon_k \leq \alpha \mu \epsilon\sum_{k=0}^n \left(\frac{1}{1+\alpha\mu}\right)^{k+1} \leq \epsilon. $$
 Furthermore, in the product $\rho^{n-k+1} \epsilon_k$, the weight $\rho^{n-k+1}$ is geometrically increasing, we can choose relative large $\epsilon_k$ in the beginning and gradually decrease $\epsilon_k$. On the other hand, when the outer iteration converges, the initial guess $u_k$ for the sub-problem
\begin{equation*}
\begin{aligned}
\nabla \tilde f(u_{k}) = \nabla f(u_{k}) + B^\top p_{k+1} = \partial_u \mathcal L(u_{k},p_{k})+ B^\top (p_{k+1} - p_k) \to 0
\end{aligned}
\end{equation*}
is already small. A smaller $\epsilon_k$ can be achieved for constant inner iteration steps. Therefore the inexact IMEX scheme retains the accelerated linear convergence rates.


\subsection{\correction{A Gauss-Seidel iteration with accelerated overrelaxation}}\label{sec:GS}

In this subsection, we propose an explicit scheme for the transformed primal-dual flow: a Gauss-Seidel iteration with accelerated overrelaxation (AOR)~\cite{hadjidimos1978accelerated}: 
\begin{equation}\label{eq:G-S TPD}
\left \{\begin{aligned}
  \frac{u_{k+1} - u_k}{\alpha}&= -\IV^{-1}(\nabla f(u_{k}) + B^{\top}p_{k}) \\
\frac{p_{k+1} - p_k}{\alpha} &=-\IQ^{-1}\left [ B\IV^{-1}\nabla f(u_{k+1}) + \nabla g_B(p_{k})- B(2u_{k+1} - u_k)\right ].      
\end{aligned} \right.
\end{equation}
The formulation \eqref{eq:G-S TPD} is in Gauss-Seidel type as when updating $p_{k+1}$, the updated $u_{k+1}$ is used. AOR is applied to the term $Bu\approx B(2u_{k+1} - u_k)$ with an overrelaxation parameter $2$. Such change is motivated by accelerated overrelaxtion methods \cite{hadjidimos1978accelerated} and the linear convergence rate is indeed accelerated to $(1+c/\kappa)^{-1}$.

For a symmetric matrix $M$, we define $$\|x\|_M^2 :=(x,x)_M := x^{\top}Mx.$$ When $M$ is SPD, it defines an inner product and the induced norm. For a general symmetric matrix, $\|\cdot \|_M$ may not be a norm. However the following identity for squares still holds
\begin{equation}\label{eq:squares}
	2(a,b)_M = \|a\|_M^2 + \|b\|_M^2 - \|a-b\|_M^2.
\end{equation}
Let $ \mathcal M_{\mathcal X} = \text{diag}\{\IV, \IQ\}$ and $x = (u, p)$. Then we have 
\begin{equation*}
    \frac{1}{2}\|x -x ^*\|_{\mathcal M_{\mathcal X} }^2  =  \frac{1}{2}\|u -u ^*\|_{\IV}^2 +\frac{1}{2}\|p -p ^*\|_{\IQ}^2. 
\end{equation*}
Now we are ready to prove the convergence rate. Consider the Lyapunov function 
 \begin{equation}\label{eq: modified Lyapunov}
    \mathcal{E} (x)= \frac{1}{2}\|x-x^{
    \star}\|^2_{\mathcal M_{\mathcal X}- \alpha\mathcal B}  -\alpha D_f(u^*, u) - \alpha D_{g_B}(p^*, p).
\end{equation}
where recall that $\mathcal B = \begin{pmatrix}
       0&  B^{\top}\\
       B& 0
\end{pmatrix}$ is a symmetric matrix and $D_f$ and $D_{g_B}$ are Bregman divergence of $f$ and $g_B$, respectively. 

\begin{lemma}\label{lem: positivity of Lyapunov}
For $\alpha  < 1/\max \{ 2L_S, 2L_{f,\IV}, 2L_{g_B, \IQ} \}$ , for the Lyapunov function $\mathcal E$ defined by \eqref{eq: modified Lyapunov}, we have  $\mathcal E(x)\geq 0$ and $\mathcal E(x)= 0$  if and only if $x = x^*$. 
\end{lemma}

\begin{proof}
Notice \begin{equation}\label{eq:Mx-B}
\begin{aligned}
         \mathcal M_{\mathcal X}- 2\alpha\mathcal B &=  \begin{pmatrix} \IV & -2\alpha B^{\top} \\
    -2\alpha B & \IQ
   \end{pmatrix} \\
   &=\begin{pmatrix} \mathcal{I} & 0 \\
    -2\alpha B\IV^{-1} & \mathcal{I}
   \end{pmatrix}\begin{pmatrix} \IV & 0 \\
    0 & \IQ- 4\alpha^2B\IV^{-1}B^{\top}
   \end{pmatrix} \begin{pmatrix}\mathcal{I} &  -2\alpha \IV^{-1} B^{\top} \\
  0  & \mathcal{I}
   \end{pmatrix}.
\end{aligned}
\end{equation}
We have
\begin{equation}\label{eq: positivity bound 1}
    \frac{1}{2}\|x-x^*\|_{ \frac{1}{2}\mathcal M_{\mathcal X}- \alpha\mathcal B}^2 =      \frac{1}{4}\|x-x^*\|_{ \mathcal M_{\mathcal X}- 2\alpha\mathcal B}^2 = \frac{1}{4}\|y-y^*\|^2_{\mathcal M_{\mathcal Y}} \geq 0
\end{equation}
where the change of variable is $y =  \begin{pmatrix}\mathcal{I} &  -2\alpha \IV^{-1} B^{\top} \\
  0  & \mathcal{I}
   \end{pmatrix} x$ and $$\mathcal M_{\mathcal Y} = \begin{pmatrix} \IV & 0 \\
    0 & \IQ- 4\alpha^2B\IV^{-1}B^{\top}
   \end{pmatrix}$$ is positive definite if $\alpha < 1/(2L_S)$. In particular, the equality is obtained if and only if $y= y^*$, which is equivalent to $x=x^*$ since the change of coordinate is invertible.
   
 For $\alpha < 1/\max \{2L_{f,\IV}, 2L_{g_B, \IQ}\}$, we have 
 \begin{equation}\label{eq: positivity bound 2}
 \begin{aligned}
        \frac{1}{2}\|x -x^*\|_{\frac{1}{2}\mathcal M_{\mathcal X}}^2 &= \frac{1}{4}\|u-u^*\|_{\IV}^2 +   \frac{1}{4}\|p-p^*\|_{\IQ}^2\\
        &\geq \frac{1}{2L_{f,\IV}}D_f(u^*, u) +  \frac{1}{2L_{g_B,\IQ}}D_{g_B}(p^*, p) \\
        &\geq \alpha D_f(u^*, u) +  \alpha D_{g_B}(p^*, p).
 \end{aligned}
 \end{equation}
The last inequality becomes equality if and only if $D_f(u^*, u) = D_{g_B}(p^*, p) = 0$, which is equivalent to $u = u^*, p=p^*$.

Sum \eqref{eq: positivity bound 1} and \eqref{eq: positivity bound 2} we get the desired inequality
\begin{equation*}
    \mathcal E(x) = \frac{1}{2}\|x-x^*\|_{\mathcal M_{\mathcal X}- \alpha\mathcal B}^2 - \alpha D_f(u^*, u) - \alpha D_{g_B}(p^*, p) \geq 0
\end{equation*}
for $\alpha  < 1/\max \{ 2L_S, 2L_{f,\IV}, 2L_{g_B, \IQ} \}$ and the equality holds is and only if $x = x^*$.
\end{proof}

Then we show the accelerated linear convergence rate.

\begin{theorem}\label{thm: convergence of G-S TPD}
Suppose $f(u) \in \mathcal{S}_{\mu_{f,\IV}, L_{f,\IV} }$  with $0< \mu_{f,\IV}\leq L_{f, \IV}< 2$. Let $x_k=(u_k,p_k)$ be generated by GS-AOR iteration \eqref{eq:G-S TPD} with initial value $x_0=(u_0, p_0)$ and $\alpha < 1/\max \{ 2L_S, 2L_{f,\IV}, 2L_{g_B, \IQ} \}$.  Then for the discrete Lyapunov function \eqref{eq: modified Lyapunov}, we have 
\begin{equation}
\begin{aligned}
\mathcal{E}(x_{k+1})\leq&~ \frac{1}{1+\mu \alpha/2}\mathcal{E}(x_k).
\end{aligned}
\end{equation}
where $\mu = \min\left \{\mu_{\mV},\mu_{\mQ} \right \}$.
\end{theorem}
\begin{proof}
We use the identity for squares \eqref{eq:squares}:
\begin{equation}\label{eq:Eqdiff}
\begin{aligned}
  \frac{1 }{2} \| x_{k+1} - x^*\|_{\mathcal M_{\mathcal X}}^2-\frac{1 }{2} \| x_{k} - x^*\|_{\mathcal M_{\mathcal X}}^2=  \langle  x_{k+1} -x^*, x_{k+1} - x_k \rangle_{\mathcal M_{\mathcal X}} -\frac{1}{2} \| x_{k+1} - x_k\|_{\mathcal M_{\mathcal X}}^2. 
\end{aligned}
\end{equation}

We write the scheme \eqref{eq:G-S TPD} as a correction of the implicit Euler scheme
\begin{align*}
	u_{k+1} - u_k & = \alpha (\mathcal G^u ( x_{k+1}) -\mathcal G^u(x^*))+ \alpha \IV^{-1} B^{\top}(p_{k+1} - p_k) + \alpha \IV^{-1}(\nabla f(u_{k+1}) - \nabla f(u_k)), \\
	p_{k+1} - p_k & = \alpha(\mathcal G^p ( x_{k+1}) -\mathcal G^p(x^*))  + \alpha \IQ^{-1}B(u_{k+1} - u_k)+ \alpha \IQ^{-1}(\nabla g_B(p_{k+1}) - \nabla g_B(p_k)).
\end{align*}

Recall that, for the TPD flow, we have proved in Theorem \ref{Continuous strong Lyapunov for transformed gradient flow} that
$$
\langle \mathcal M_{\mathcal X} (x_{k+1}-x^*),  \mathcal G ( x_{k+1}) -\mathcal G ( x^*)  \rangle \leq - \frac{\mu}{2}\|x_{k+1} -x^*\|^2_{\mathcal M_{\mathcal X}}. 
$$
We merge the first cross terms and use the identity \eqref{eq:squares} to expand as
\begin{align*}
  &(  u_{k+1} -u^*, B^{\top} (p_{k+1} - p_k)) + (  p_{k+1} -p^*, B(u_{k+1} - u_k)) \\
  ={} & (  x_{k+1} -x^*, x_{k+1} - x_k)_{\mathcal B} \\
  ={} & \frac{1}{2}(\| x_{k+1} -x^* \|_{\mathcal B}^2 +  \| x_{k+1} -x_k \|_{\mathcal B}^2 - \| x_{k} -x^* \|_{\mathcal B}^2).
\end{align*}
The other cross terms with the Bregman divergence is expanded using the identity \eqref{eq: Bregman divergence identity}
\begin{equation*}
\begin{aligned}
   \langle  u_{k+1} - u^*, \nabla f(u_{k+1}) - \nabla f(u_k) \rangle & = D_f(u^*, u_{k+1}) + D_f( u_{k+1}, u_k) - D_f( u^*, u_k), \\
  \langle  p_{k+1} - p^*,\nabla g_B(p_{k+1}) - \nabla g_B(p_k) \rangle & = D_{g_B}(p^*, p_{k+1}) + D_{g_B}( p_{k+1}, p_k) - D_{g_B}( p^*, p_k) .
\end{aligned}
\end{equation*}
Substituting back to \eqref{eq:Eqdiff} we obtain the inequality
\begin{equation*}
\begin{aligned}
&  \frac{1 }{2} \| x_{k+1} - x^*\|_{\mathcal M_{\mathcal X}}^2-\frac{1 }{2} \| x_{k} - x^*\|_{\mathcal M_{\mathcal X}}^2 \\
\leq{}  &- \frac{\mu \alpha}{2}\|x_{k+1} -x^*\|^2_{\mathcal M_{\mathcal X}} -\frac{1}{2} \| x_{k+1} - x_k\|_{\mathcal M_{\mathcal X}}^2  \\
      &+ \frac{\alpha}{2} \| x_{k+1} -x^* \|_{\mathcal B}^2 +  \frac{\alpha}{2}  \| x_{k+1} -x_k \|_{\mathcal B}^2 - \frac{\alpha}{2} \| x_{k} -x^* \|_{\mathcal B}^2 \\
    &+ \alpha D_f(u^*, u_{k+1}) + \alpha D_f( u_{k+1}, u_k) - \alpha D_f( u^*, u_k)\\
    &+ \alpha  D_{g_B}(p^*, p_{k+1}) +\alpha  D_{g_B}( p_{k+1}, p_k) -\alpha  D_{g_B}( p^*, p_k) .
\end{aligned}
\end{equation*}
Rewrite the inequality with $\mathcal E$ by rearranging the terms, we obtain
\begin{align*}
\mathcal E(x_{k+1}) - \mathcal E(x_{k}) \leq{}& - \frac{\mu \alpha}{2}\|x_{k+1} -x^*\|^2_{\mathcal M_{\mathcal X}} \\
& -\left [\frac{1}{2}\| x_{k+1}- x_k \|_{\mathcal M_{\mathcal X} - \alpha \mathcal B}^2 - \alpha D_f( u_{k+1}; u_k) - \alpha D_{g_B}( p_{k+1}; p_k) \right ]\\
\leq{}& - \frac{\mu \alpha}{2}\|x_{k+1} -x^*\|^2_{\mathcal M_{\mathcal X}}\\
\leq{}& - \frac{\mu \alpha}{2} \mathcal E(x_{k+1})
\end{align*}
where in the second inequality, by the proof of Lemma \ref{lem: positivity of Lyapunov}, the extra term is negative, and in the third equality, we use $\mathcal M_{\mathcal X}\geq \frac{1}{2}(\mathcal M_{\mathcal X} - \alpha \mathcal B)$ by a factorization similar to \eqref{eq:Mx-B}. 
\end{proof}


Theorem \ref{thm: convergence of G-S TPD} showed the step size is inversely proportional to the Lipschitz constants. Compared with the step size of the explicit schemes and IMEX schemes, which is also proportional to the convexity constants, the Lipschitz constants are usually easier to estimate.

\begin{remark}\rm 
If we further choose a large enough $\IQ$ (or scale appropriately) such that $L_S \leq 2$, then the upper bound of the step size can be enlarged to $\alpha < 1/\max \{ 4, 2L_{g_B, \IQ} \}$. For $\alpha = 1/ \max \left\{8 ,4L_{g_B,\IQ}  \right \}$, the convergence rate
\begin{equation*}
     \frac{1}{1+\mu \alpha/2} = \left (1+\frac{\min\left \{\mu_{\mV}, \mu_{\mQ}\right \}}{8\max \{L_{g_{B,\IQ}}, 2\}} \right)^{-1}.
\end{equation*}
In particular, when $g(p) = (b, p)$ is affine, $L_{g_B,\IQ} = L_S^2\leq 1$, we can choose constant step size $\alpha =1/8$ and get the linear rate
\begin{equation*}
    \frac{1}{1+\mu \alpha/2} = \frac{1}{1+\min\left \{\mu_{\mV}, \mu_{\mQ}\right \}/16}. 
\end{equation*}
\end{remark}

\section{Symmetric Transformed Primal-Dual Iterations}\label{sec:symmetric}

In this section, we present symmetric transformed primal-dual iterations which retain linear convergence when $f$ is strongly convex in the subspace $\ker(B)$ and may not be in the whole space.

\subsection{Symmetric transformed primal-dual flow}
To distinguish the role of transformation and preconditioners, we introduce SPD matrices $\IU, \IP$ for the transformation and treat $\IV$ and $\IQ$ as preconditioners. The change of variable associated with $\IU, \IP$ is given as
\begin{equation*}
\begin{aligned}
 v = u + \IU^{-1}B^{\top} p,  \quad q = p - \IP^{-1}Bu.
\end{aligned}
\end{equation*}
Recall that the strong convexity of the dual variable $p$ comes from the strong convexity of $g_B(p) = g(p) + \frac{1}{2} \left( B\IU^{-1}B^{\top}p, p\right)$. Symmetrically, define 
\begin{equation}\label{eq: fB}
    f_B(u) = f(u) + \frac{1}{2}(B^{\top}\IP^{-1}Bu, u ).
\end{equation}
With the spirit of transformation,  if $f_B(u)$ is strongly convex while $\mu_f = 0$, linear convergence rates can be still obtained by applying transformation to both the primal and dual variables. \correction{There are applications under this consideration, for example, see~\cite{chen2018multigrid} for solving Maxwell equations with divergence-free constraints.}

We present the symmetric transformed primal-dual (STPD) flow with $\IV, \IQ$ as \correction{preconditioners}:
\begin{equation}\label{eq: symmetric transformed primal-dual flow}
\left \{ \begin{aligned}
u' &=  \mathcal{G}^u(u,p)\\
p' &= \mathcal{G}^p(u,p)
\end{aligned} \right.
\end{equation}
with
\begin{equation}\label{eq: symmetric GuGp} 
    \begin{aligned}   
    \mathcal{G}^u(u,p) &= -\IV^{-1}(\partial_u \mathcal{L}(u,p)  + B^{\top}\IP^{-1}\partial_p \mathcal{L}(u,p)) \\
    &= -\IV^{-1}\left (\nabla f_B(u)+ B^{\top}(p - \IP^{-1} \nabla g(p)) \right ), \\
    \mathcal{G}^p(u,p) &= \IQ^{-1}\left ( \partial_p \mathcal{L}(u,p)-B\IU^{-1}\partial_u \mathcal{L}(u,p) \right ) \\
    &= - \IQ^{-1}\left(\nabla g_B(p) - B(u - \IU^{-1} \nabla f(u)) \right).
\end{aligned}
\end{equation}


The following lower bound of the cross terms can be proved like Lemma \ref{lem:key}. Here we state results with operators $\IU, \IP$.

\begin{lemma}\label{lem: key v}
Suppose $f \in \mathcal{S}_{\mu_{f,\IU}, L_{f,\IU}}$. For any $u_1,u_2\in \mV$ and $p_1, p_2\in \mQ$, we have
\begin{align*}
  & \langle \nabla f(u_1)-\nabla f(u_2), \IU^{-1}B^\top(p_1 - p_2) \rangle \\
    \geq{} &\frac{\mu_{f,\IU}}{2} \|v_1-v_2\|^2_{\IU} - \frac{L_{f,\IU}}{2}\| B^{\top}(p_1 - p_2)\|^2_{\IU^{-1}} - \frac{1}{2}\langle \nabla f(u_1)-\nabla f(u_2), u_1 - u_2 \rangle,
\end{align*}
where recall $v = u + \IU^{-1}B^{\top} p$.
\end{lemma}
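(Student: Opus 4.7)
The plan is to mirror the proof of Lemma \ref{lem:key} almost verbatim, substituting $\IU$ for $\IV$ throughout. The key observation that drives the whole argument is that the change of variable $v = u + \IU^{-1}B^{\top}p$ yields the algebraic identity
\begin{equation*}
\IU^{-1}B^\top(p_1 - p_2) = (v_1 - v_2) - (u_1 - u_2) = u_2 - (u_1 - v_1 + v_2),
\end{equation*}
so the inner product on the left-hand side can be re-expressed entirely in terms of $\nabla f$ evaluated at $u_1, u_2$ paired with a difference of points in $\mV$. This is exactly the structure needed to invoke the Bregman divergence identity~\eqref{eq: Bregman divergence identity}.

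First, I would apply~\eqref{eq: Bregman divergence identity} with the triple $(u_1, u_2, u_1 - v_1 + v_2)$ to obtain
\begin{equation*}
\langle \nabla f(u_1)-\nabla f(u_2),\, u_2 - (u_1-v_1+v_2) \rangle = D_f(u_1-v_1+v_2, u_1) - D_f(u_1-v_1+v_2, u_2) - D_f(u_2, u_1).
\end{equation*}
I would then apply the $\IU$-norm versions of the strong convexity and Lipschitz bounds on $D_f$ from the assumption $f \in \mathcal{S}^{1,1}_{\mu_{f,\IU},L_{f,\IU}}$: the first term is bounded below by $\tfrac{\mu_{f,\IU}}{2}\|v_1-v_2\|^2_{\IU}$ (noting $(u_1-v_1+v_2) - u_1 = v_2 - v_1$), while the second term is bounded above by $\tfrac{L_{f,\IU}}{2}\|(u_1-u_2)-(v_1-v_2)\|^2_{\IU} = \tfrac{L_{f,\IU}}{2}\|B^{\top}(p_1-p_2)\|^2_{\IU^{-1}}$, using the identity $(u_1-u_2)-(v_1-v_2) = -\IU^{-1}B^\top(p_1-p_2)$ and the definition of the $\IU^{-1}$-norm.

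Next I would swap the roles of $u_1$ and $u_2$ (and correspondingly $v_1,v_2,p_1,p_2$) and repeat the same computation to obtain a symmetric inequality in which $D_f(u_1,u_2)$ appears in place of $D_f(u_2,u_1)$. Summing the two inequalities produces $D_f(u_2,u_1) + D_f(u_1,u_2) = \langle \nabla f(u_1)-\nabla f(u_2), u_1-u_2\rangle$ on the right-hand side, and a factor of $2$ on the left that exactly absorbs into the coefficient $\tfrac{1}{2}$ of the claimed inequality. Since the whole argument is a direct transcription of Lemma \ref{lem:key} with the symbol $\IV$ replaced by $\IU$, there is no new obstacle; the only minor bookkeeping point is to verify that the Bregman bounds indeed use the $\IU$-norm, which is immediate from the hypothesis $f \in \mathcal{S}_{\mu_{f,\IU},L_{f,\IU}}^{1,1}$. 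Hence the proof can be given in just a few lines by citing Lemma \ref{lem:key} with $\IV$ replaced by $\IU$.
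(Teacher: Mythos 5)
Your proof is correct and is exactly the route the paper intends: the paper gives no separate argument for this lemma, stating only that it "can be proved like Lemma \ref{lem:key}," and your transcription (rewriting $\IU^{-1}B^\top(p_1-p_2)$ via $v=u+\IU^{-1}B^\top p$, applying the Bregman identity \eqref{eq: Bregman divergence identity} with the $\IU$-norm convexity/Lipschitz bounds, then symmetrizing and summing) reproduces that proof with $\IV$ replaced by $\IU$.
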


\begin{lemma}\label{lem: key q}
Suppose $g \in \mathcal{S}_{\mu_{g,\IP}, L_{g,\IP}}$. For any $u_1,u_2\in \mV$ and $p_1, p_2\in \mQ$, we have
\begin{align*}
  & \langle \nabla g(p_1)-\nabla g(p_2), -\IP^{-1}B(u_1 - u_2) \rangle \\
    \geq{} &\frac{\mu_{g,\IP}}{2} \|q_1-q_2\|^2_{\IP} - \frac{L_{g,\IP}}{2}\| B(u_1 - u_2)\|^2_{\IP^{-1}} - \frac{1}{2}\langle \nabla g(p_1)-\nabla g(p_2), p_1 - p_2 \rangle,
\end{align*}
where recall $q = p - \IP^{-1}Bu$. In particular, when $g(p) = (b,p)$ is affine, the equality holds with all terms are 0.
\end{lemma}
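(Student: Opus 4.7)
The plan is to mimic the proof of Lemma \ref{lem:key} with the roles of primal and dual swapped. The defining relation $q = p - \IP^{-1}Bu$ yields $q_1 - q_2 = (p_1-p_2) - \IP^{-1}B(u_1-u_2)$, so I can rewrite the cross-term argument as
\[
-\IP^{-1}B(u_1-u_2) = (q_1-q_2) - (p_1-p_2) = p_2 - (p_1 - q_1 + q_2),
\]
converting the $u$-difference into a difference of $p$-type points amenable to the Bregman divergence identity \eqref{eq: Bregman divergence identity} for $h$.

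Next I would apply \eqref{eq: Bregman divergence identity} at the triple $(p_1, p_2, p_1 - q_1 + q_2)$, obtaining
\[
\langle \nabla h(p_1) - \nabla h(p_2), p_2 - (p_1 - q_1 + q_2)\rangle = D_h(p_1-q_1+q_2, p_1) - D_h(p_1-q_1+q_2, p_2) - D_h(p_2, p_1).
\]
Strong convexity of $h$ with respect to $\|\cdot\|_{\IP}$ gives $D_h(p_1-q_1+q_2, p_1) \geq \tfrac{\mu_{h,\IP}}{2}\|q_1-q_2\|_{\IP}^2$, while the Lipschitz-gradient bound gives $D_h(p_1-q_1+q_2, p_2) \leq \tfrac{L_{h,\IP}}{2}\|(p_1-p_2)-(q_1-q_2)\|_{\IP}^2$. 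The residual norm simplifies via the definition of $q$ and the duality between $\|\cdot\|_{\IP}$ and $\|\cdot\|_{\IP^{-1}}$:
\[
\|(p_1-p_2)-(q_1-q_2)\|_{\IP}^2 = \|\IP^{-1}B(u_1-u_2)\|_{\IP}^2 = \|B(u_1-u_2)\|_{\IP^{-1}}^2.
\]

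Then I would repeat the identical manipulation with $p_1$ and $p_2$ exchanged and sum the two inequalities. As in the proof of Lemma \ref{lem:key}, the left-hand sides add to twice the desired cross term, the strong convexity and Lipschitz terms double, and the two subtracted Bregman divergences combine via the symmetrized identity $D_h(p_2,p_1) + D_h(p_1,p_2) = \langle \nabla h(p_1) - \nabla h(p_2), p_1 - p_2\rangle$. Dividing by $2$ yields the claimed inequality. For the affine case $h(p) = (b,p)$, the left-hand side vanishes because $\nabla h$ is constant; simultaneously $\mu_{h,\IP} = L_{h,\IP} = 0$ and all Bregman divergences of an affine function vanish, so every term on the right-hand side is zero as well.

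The argument is essentially a mechanical transcription of Lemma \ref{lem:key} under the symmetry $f \leftrightarrow h$, $u \leftrightarrow p$, $\IV \leftrightarrow \IP$, $B^\top \leftrightarrow -B$; I do not anticipate any substantive obstacle. The only points that require care are the sign bookkeeping when folding the $-\IP^{-1}B(u_1-u_2)$ term into the Bregman triple, and the correct use of the norm duality $\|\IP^{-1}w\|_{\IP}^2 = \|w\|_{\IP^{-1}}^2$ that converts the Lipschitz upper bound back to a norm on $B(u_1-u_2)$.
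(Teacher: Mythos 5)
Your proposal is correct and is exactly the argument the paper intends: the paper states that Lemma \ref{lem: key q} ``can be proved like Lemma \ref{lem:key}'', and your transcription under the symmetry $f \leftrightarrow h$, $u \leftrightarrow p$, $\IV \leftrightarrow \IP$, $B^\top \leftrightarrow -B$ (using $q = p - \IP^{-1}Bu$, the Bregman three-term identity at the triple $(p_1,p_2,p_1-q_1+q_2)$, its index-swapped counterpart, and the identity $\|\IP^{-1}B(u_1-u_2)\|_{\IP}^2 = \|B(u_1-u_2)\|_{\IP^{-1}}^2$) carries through with the correct signs, including the affine case where all terms vanish.
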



 

\correction{The strong Lyapunov property and the Liptschitz continuity can be verified following the lines of proof in Section \ref{sec: transformed primal-dual flow}. For completeness, we present the results and skipped the proofs for brevity.}

\begin{theorem}\label{thm: strong Lyapunov property for symmetric TPD flow}
Choose $\IP$ such that $g(p) \in \mathcal{S}_{\mu_{g,\IP}, L_{g,\IP}} $ with $L_{g, \IP}\leq 1$. Choose $\IU$ such that $f(u) \in \mathcal{S}_{\mu_{f,\IU}, L_{f,\IU}}$ with $L_{f, \IU}\leq 1$ and assume $f_B$ is strongly convex, i.e, $\mu_{f_B, \IV} > 0$.  Then for the Lyapunov function~\eqref{eq: Lyapunov} and the STPD field $\mathcal G$~\eqref{eq: symmetric GuGp}, the following strong Lyapunov property holds
\begin{equation}\label{eq: strong lyapunov in for symmetric TPD flow}
    -\nabla \mathcal{E}(u,p) \cdot \mathcal{G}(u,p) \geq \mu \, \mathcal{E}(u,p) + \frac{\mu_{f,\IU}}{2}\| v - v^* \|_{\IU}^2 + \frac{\mu_{g,\IP}}{2} \|q-q^*\|^2_{\IP},
\end{equation}
where $0 < \mu = \min \left \{\mu_{f_B,\IV}, \mu_{g_B,\IQ}\right\}$. Consequently if $(u(t), p(t))$ solves the STPD flow~\eqref{eq: symmetric transformed primal-dual flow}, we have the exponential decay
$$\mathcal{E}(u(t),p(t)) \leq e^{-\mu t}\mathcal{E}(u(0),p(0)), \quad \forall t > 0.$$
\end{theorem}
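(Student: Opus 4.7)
The plan is to adapt the strategy of Theorem \ref{Continuous strong Lyapunov for transformed gradient flow} to the symmetric setting: identify the cross terms generated by the transformations, cancel the antisymmetric ones, bound the rest via Lemmas \ref{lem: key v} and \ref{lem: key q}, and finally combine the quadratic $B$-terms with the weak convexity of $f$ and $h$ to recover strong convexity of $f_B$ and $h_B$.

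First I would write out $-\nabla \mathcal E(u,p)\cdot \mathcal G(u,p)$ explicitly. Using $\mathcal{G}(u^*,p^*)=0$ and the definitions \eqref{eq: symmetric GuGp}, this quantity equals
\begin{align*}
 & \langle u-u^*,\, (\partial_u\mathcal L-\partial_u\mathcal L^*) + B^\top\IP^{-1}(\partial_p \mathcal L-\partial_p\mathcal L^*)\rangle \\
 &\qquad - \langle p-p^*,\, (\partial_p \mathcal L-\partial_p\mathcal L^*) - B\IU^{-1}(\partial_u\mathcal L-\partial_u\mathcal L^*)\rangle.
\end{align*}
Substituting $\partial_u\mathcal L-\partial_u\mathcal L^*=\Delta_f+B^\top\Delta p$ and $\partial_p\mathcal L-\partial_p\mathcal L^*=B\Delta u-\Delta_h$, with $\Delta u=u-u^*$, $\Delta p=p-p^*$, $\Delta_f=\nabla f(u)-\nabla f(u^*)$, $\Delta_h=\nabla h(p)-\nabla h(p^*)$, the antisymmetric cross terms $\langle \Delta u, B^\top\Delta p\rangle$ and $\langle \Delta p, B\Delta u\rangle$ cancel. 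What remains is
\[
\langle\Delta u,\Delta_f\rangle + \langle\Delta p,\Delta_h\rangle + \|B\Delta u\|^2_{\IP^{-1}} + \|B^\top\Delta p\|^2_{\IU^{-1}} - \langle \Delta u, B^\top\IP^{-1}\Delta_h\rangle + \langle \Delta p, B\IU^{-1}\Delta_f\rangle.
\]

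Next I would handle the two mixed gradient terms. The term $\langle \Delta p, B\IU^{-1}\Delta_f\rangle=\langle \Delta_f, \IU^{-1}B^\top\Delta p\rangle$ is exactly the left-hand side in Lemma \ref{lem: key v}, giving a lower bound involving $\frac{\mu_{f,\IU}}{2}\|v-v^*\|^2_{\IU}$, a penalty $-\frac{L_{f,\IU}}{2}\|B^\top\Delta p\|^2_{\IU^{-1}}$, and $-\frac{1}{2}\langle \Delta_f,\Delta u\rangle$. Symmetrically, $-\langle \Delta u, B^\top\IP^{-1}\Delta_h\rangle=\langle \Delta_h, -\IP^{-1}B\Delta u\rangle$ is bounded below via Lemma \ref{lem: key q} by $\frac{\mu_{h,\IP}}{2}\|q-q^*\|^2_{\IP} - \frac{L_{h,\IP}}{2}\|B\Delta u\|^2_{\IP^{-1}} - \frac{1}{2}\langle \Delta_h,\Delta p\rangle$.

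Collecting terms leaves a factor $\tfrac12$ on each of $\langle\Delta u,\Delta_f\rangle$ and $\langle\Delta p,\Delta_h\rangle$, and factors $(1-\tfrac{L_{h,\IP}}{2})$ and $(1-\tfrac{L_{f,\IU}}{2})$ on the squared $B$-terms. Here the assumption $L_{h,\IP}\le1$ and $L_{f,\IU}\le1$ is essential: each factor is at least $\tfrac12$, so I can group
\[
\tfrac{1}{2}\bigl(\langle\Delta u,\Delta_f\rangle + \|B\Delta u\|^2_{\IP^{-1}}\bigr) + \tfrac{1}{2}\bigl(\langle\Delta p,\Delta_h\rangle + \|B^\top\Delta p\|^2_{\IU^{-1}}\bigr).
\]
Recognizing $\langle\Delta u,\Delta_f\rangle + \|B\Delta u\|^2_{\IP^{-1}} = \langle \nabla f_B(u)-\nabla f_B(u^*),\Delta u\rangle$ (since $\nabla f_B = \nabla f + B^\top\IP^{-1}B$) and similarly the analog for $h_B$, the symmetrized Bregman divergences deliver lower bounds $\mu_{f_B,\IV}\|\Delta u\|^2_{\IV}$ and $\mu_{h_B,\IQ}\|\Delta p\|^2_{\IQ}$. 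Combined with the retained $v$- and $q$-terms, this yields \eqref{eq: strong lyapunov in for symmetric TPD flow} with $\mu = \min\{\mu_{f_B,\IV},\mu_{h_B,\IQ}\}$.

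Finally, the exponential decay is a direct consequence: along a trajectory, $\tfrac{d}{dt}\mathcal E(u(t),p(t)) = \nabla\mathcal E\cdot\mathcal G \leq -\mu\mathcal E$, and Grönwall's inequality closes the argument. The main obstacle I anticipate is ensuring the algebraic accounting — in particular, making sure the $\|B\Delta u\|^2_{\IP^{-1}}$ produced by the $B^\top\IP^{-1}\partial_p\mathcal L$ component of $\mathcal G^u$ survives with a coefficient at least $\tfrac12$ after absorbing the Lipschitz penalty from Lemma \ref{lem: key q}, and likewise for the dual side; the hypotheses $L_{f,\IU},L_{h,\IP}\le 1$ are precisely what makes this bookkeeping go through.
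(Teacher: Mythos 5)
Your proposal is correct and follows essentially the same route as the paper: expand $-\nabla\mathcal E\cdot\mathcal G$ by inserting the vanishing gradients at the saddle point, cancel the antisymmetric cross terms, lower-bound the two mixed gradient terms with Lemmas \ref{lem: key v} and \ref{lem: key q}, and then use $L_{f,\IU},L_{h,\IP}\le 1$ to regroup the halved monotonicity terms with the quadratic $B$-terms into the symmetrized Bregman divergences of $f_B$ and $h_B$, measured in $\IV$ and $\IQ$. The final Grönwall step matches the paper's conclusion as well.
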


\begin{remark}
\rm
The assumptions on Lipschitz constants can be relaxed to $L_{f, \IU} < 2$ and  $L_{g, \IP} < 2$, then the effective $\mu = \min \{\mu_{\mV}, \mu_{\mQ}\}$ is defined as
$$\mu_{\mV} = \min \{1, 2 - L_{f, \IU}\} \mu_{f_B, \IV}, \quad \mu_{\mQ} = \min \{1, 2 - L_{g, \IP}\} \mu_{g_B, \IQ}.$$
Therefore the algorithm is robust with perturbation on Lipschitz constants around $1$.  $\qed$
\end{remark}
To guarantee the exponential decay of the STPD flow, we require both $g_B$ and $f_B$ are strongly convex. In the linear saddle point system, this reduced to the necessary and sufficient conditions in~\cite{Zulehner2011} for the well-posedness of a saddle point problem. Especially for $g(p)=(b,p)$, it corresponds to the inf-sup condition for saddle point systems~\cite{brezziExistenceUniquenessApproximation1974}. 



Define 
\begin{equation}\label{eq: eUeP}
\begin{aligned}
     e_\mU = u - \IU^{-1} \nabla f(u),   \quad
     e_\mP = p - \IP^{-1} \nabla g(p)
\end{aligned}
\end{equation}
They are Lipschitz continuous as discussed in Section \ref{sec: eu} and the constants will be denoted by $L_{e_{\mU}, \IU}$ and $L_{e_{\mP}, \IP}$. 


%
%
%

\begin{lemma}\label{lem: Lipschitz continuity of sysmmetric TPD flow}
Assume $\nabla f_B$ and $\nabla g_B$ are Lipschitz continuous with Lipschitz constant $L_{f_B,\IV}$ and $L_{g_B, \IQ}$, respectively. Let $L_{e_{\mU},\IV}, L_{e_{\mP},\IQ}$ be the Lipschitz constant of $e_\mU, e_\mP$, respectively, then we have
\begin{equation*}
    \begin{aligned}
\|\mathcal{G}^u(u_1,p_1) - \mathcal{G}^u(u_2,p_2)\|_{\IV} &\leq L_{f_B, \IV}\|u_1-u_2\|_{\IV} +L_{e_{\mP}, \IQ} L_S \|p_1-p_2\|_{\IQ},\\
\|\mathcal{G}^p(u_1,p_1) - \mathcal{G}^p(u_2,p_2)\|_{\IQ} &\leq L_{g_B, \IQ}\|p_1-p_2\|_{\IQ} +L_{e_{\mU}, \IV} L_S \|u_1-u_2\|_{\IV},
    \end{aligned}
\end{equation*}
for all $u_1, u_2 \in \mV$ and $p_1, p_2 \in \mQ$.
\end{lemma}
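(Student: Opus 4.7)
The plan is to mimic the argument of Lemma~\ref{Lipschitz continuity of flow} but now taking advantage of the new decomposition of $\mathcal G$. Using the identities $e_{\mU}(u) = u - \IU^{-1}\nabla f(u)$ and $e_{\mP}(p) = p - \IP^{-1}\nabla h(p)$, I first rewrite the field \eqref{eq: symmetric GuGp} in the compact form
\begin{equation*}
\mathcal G^u(u,p) = -\IV^{-1}\bigl(\nabla f_B(u) + B^{\top} e_{\mP}(p)\bigr),\qquad
\mathcal G^p(u,p) = -\IQ^{-1}\bigl(\nabla h_B(p) - B\, e_{\mU}(u)\bigr).
\end{equation*}
This separates cleanly a ``gradient of the augmented function'' piece and a ``Schur-type coupling'' piece, each of which is easy to estimate.

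Next I will apply the triangle inequality in the $\IV$-norm to $\mathcal G^u(u_1,p_1) - \mathcal G^u(u_2,p_2)$. The first term is bounded by the Lipschitz continuity of $\nabla f_B$ using the duality identity $\|\IV^{-1}\xi\|_{\IV} = \|\xi\|_{\IV^{-1}}$, which gives $\|\IV^{-1}(\nabla f_B(u_1) - \nabla f_B(u_2))\|_{\IV} \leq L_{f_B,\IV}\|u_1-u_2\|_{\IV}$. For the second term I compute
\begin{equation*}
\|\IV^{-1}B^{\top} w\|_{\IV}^2 = (B\IV^{-1}B^{\top} w, w) \leq L_S^2\, \|w\|_{\IQ}^2,
\end{equation*}
with $w = e_{\mP}(p_1) - e_{\mP}(p_2)$, using $L_S^2 = \lambda_{\max}(\IQ^{-1}B\IV^{-1}B^\top)$. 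Then the assumed Lipschitz continuity of $e_{\mP}$ in the $\IQ$-norm with constant $L_{e_{\mP},\IQ}$ yields the coupling bound $L_S L_{e_{\mP},\IQ}\|p_1-p_2\|_{\IQ}$.

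The bound for $\mathcal G^p$ is obtained by the exact symmetric argument, swapping the roles of $(u,\IV,f_B)$ with $(p,\IQ,h_B)$ and of $B$ with $B^\top$, together with the observation that $\lambda_{\max}(\IV^{-1}B^{\top}\IQ^{-1}B) = \lambda_{\max}(\IQ^{-1}B\IV^{-1}B^{\top}) = L_S^2$, so the same constant $L_S$ appears. Assembling the two estimates gives the claimed inequalities.

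No step is a genuine obstacle: the argument is bookkeeping in the appropriate inner products. The only point that requires mild care is using the correct norm for each Lipschitz constant — $\nabla f_B$ and $\nabla h_B$ measured with respect to their natural preconditioned norms $\IV^{-1}/\IV$ and $\IQ^{-1}/\IQ$, and $e_{\mU}, e_{\mP}$ measured in the preconditioner norms $\IV, \IQ$ as declared in the statement, rather than in the transformation norms $\IU,\IP$ used to define them.
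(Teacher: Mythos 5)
Your proposal is correct and follows essentially the same route the paper intends: the paper dismisses the proof as "straightforward by formulation \eqref{eq: symmetric GuGp}", and your write-up simply fills in the routine steps (triangle inequality, the duality identity $\|\IV^{-1}\xi\|_{\IV}=\|\xi\|_{\IV^{-1}}$, the spectral bound via $\lambda_{\max}(\IV^{-1}B^{\top}\IQ^{-1}B)=\lambda_{\max}(\IQ^{-1}B\IV^{-1}B^{\top})=L_S^2$, and the Lipschitz bounds on $e_{\mU},e_{\mP}$), exactly mirroring the detailed argument of Lemma \ref{Lipschitz continuity of flow}. Your remark about measuring the Lipschitz constants of $e_{\mU},e_{\mP}$ in the preconditioner norms $\IV,\IQ$ rather than the transformation norms $\IU,\IP$ is the right reading of the statement.
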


\subsection{Explicit Euler method}
An explicit discretization for~\eqref{eq: symmetric transformed primal-dual flow} is as follows:
\begin{equation}\label{eq: symmetric EE method}
\left\{\begin{array}{l}\begin{aligned}
u_{k+1} &= u_k + \alpha_k \mathcal G^u(u_{k}, p_{k}), \\
p_{k+1}  &= p_k + \alpha_k \mathcal G^p(u_{k}, p_{k}).
\end{aligned}\end{array}\right.
\end{equation}
To compute the transformation, we introduce intermediate variables $u_{k+1/2}, p_{k+1/2}$ and present an equivalent but computationally favorable form of~\eqref{eq: symmetric EE method}:
\begin{equation}
\left\{\begin{array}{l}
   \begin{aligned}\label{eq: EE symmetric TPD}
u_{k+1/2} &= u_k - \IU^{-1}(\nabla f(u_k) + B^\top p_{k}),\\ 
p_{k+1/2} &= p_k - \IP^{-1}(\nabla g(p_k) - B u_{k}),\\
u_{k+1} &= u_k - \alpha_k \IV^{-1}\left(\nabla f(u_k) + B^{\top}p_{k+1/2} \right),\\
p_{k+1}  &= p_k - \alpha_k \IQ^{-1}\left(\nabla g(p_k) - B u_{k+1/2} \right).
\end{aligned}
\end{array}\right. 
\end{equation}
All four SPD operators can be scaled identities and scheme~\eqref{eq: EE symmetric TPD} can \correction{be interpreted} as two steps of primal-dual iterations with the same gradient $\nabla f(u_k)$ and $\nabla g(p_k)$. 
The convergence analysis is more clear in the formulation~\eqref{eq: symmetric EE method}. Follow the same proof of Theorem \ref{convergence rate for EE discretization}, we obtain the linear convergence of the scheme~\eqref{eq: EE symmetric TPD}.


\begin{theorem}\label{thm: convergence rate for symmetric EE method}
Choose $\IP$ such that $g(p) \in \mathcal{S}_{\mu_{g,\IP}, L_{g,\IP}} $ with $L_{g, \IP}\leq 1$ and choose $\IU$ such that $f(u) \in \mathcal{S}_{\mu_{f,\IU}, L_{f,\IU}}$ with $L_{f, \IU}\leq 1$. Assume $f_B$ is strongly convex, i.e, $\mu_{f_B, \IV} > 0$ and $g_B$ is strongly convex with $\mu_{g_B, \IQ}>0$. Let $(u_k, p_k)$ follows the explicit scheme~\eqref{eq: symmetric EE method} for the STPD flow with initial value $(u_0, p_0)$. For the Lyapunov function defined by~\eqref{eq: Lyapunov}, it holds that
$$\mathcal{E}(u_{k+1}, p_{k+1}) \leq (1-\delta_k)\mathcal{E}(u_k, p_k)$$
for $0 < \alpha_k < \displaystyle \min \left \{\mu_{f_B, \IV}/L_{\mV}^2, \mu_{g_B, \IQ}/L_{\mQ}^2 \right \}$
and $$0 < \delta_k = \min \left \{ \alpha_k(\mu_{f_B, \IV} - L_{\mV}^2\alpha_k), \alpha_k\left ( \mu_{g_B, \IQ}- L_{\mQ}^2\alpha_k\right) \right\}<1,$$ 
with $$L_{\mV}^2 = 2\left (L_{f_B, \IV}^2 +L_{e_{\mU}, \IV}^2 L_S^2 \right), \quad L_{\mQ}^2 = 2\left (L_{g_B, \IQ}^2 + L_{e_{\mP}, \IQ}^2 L_S^2\right).$$

\end{theorem}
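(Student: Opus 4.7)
The plan is to mimic the proof of Theorem \ref{convergence rate for EE discretization} (the non-symmetric explicit Euler case), replacing the roles of $\mathcal{G}^u, \mathcal{G}^p$ by their symmetric versions \eqref{eq: symmetric GuGp} and using the symmetric strong Lyapunov property established in Theorem \ref{thm: strong Lyapunov property for symmetric TPD flow} together with the Lipschitz bounds in Lemma \ref{lem: Lipschitz continuity of sysmmetric TPD flow}. Since $\mathcal{E}(u,p)$ is quadratic, I would first write the exact identity
\begin{equation*}
\begin{aligned}
\mathcal{E}(u_{k+1},p_{k+1}) - \mathcal{E}(u_k,p_k)
={}& \alpha_k\langle \nabla\mathcal{E}(u_k,p_k), \mathcal{G}(u_k,p_k)\rangle \\
&+ \tfrac{1}{2}\|u_{k+1}-u_k\|_{\IV}^2 + \tfrac{1}{2}\|p_{k+1}-p_k\|_{\IQ}^2,
\end{aligned}
\end{equation*}
using the scheme \eqref{eq: symmetric EE method} to substitute $u_{k+1}-u_k = \alpha_k\mathcal{G}^u(u_k,p_k)$ and similarly for $p$.

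Next I would bound the first-order term from below by invoking Theorem \ref{thm: strong Lyapunov property for symmetric TPD flow}:
\begin{equation*}
-\alpha_k\langle \nabla\mathcal{E}(u_k,p_k), \mathcal{G}(u_k,p_k)\rangle
\geq \tfrac{\alpha_k\mu_{f_B,\IV}}{2}\|u_k-u^*\|_{\IV}^2 + \tfrac{\alpha_k\mu_{h_B,\IQ}}{2}\|p_k-p^*\|_{\IQ}^2,
\end{equation*}
where I simply drop the non-negative extra terms $\tfrac{\mu_{f,\IU}}{2}\|v_k-v^*\|_{\IU}^2$ and $\tfrac{\mu_{h,\IP}}{2}\|q_k-q^*\|_{\IP}^2$ that appear in \eqref{eq: strong lyapunov in for symmetric TPD flow}. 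For the second-order term I would use Lemma \ref{lem: Lipschitz continuity of sysmmetric TPD flow} together with the inequality $(a+b)^2 \leq 2(a^2+b^2)$:
\begin{equation*}
\|\mathcal{G}^u(u_k,p_k)\|_{\IV}^2 \leq 2 L_{f_B,\IV}^2\|u_k-u^*\|_{\IV}^2 + 2 L_S^2 L_{e_{\mU},\IV}^2 \|p_k-p^*\|_{\IQ}^2,
\end{equation*}
and symmetrically for $\|\mathcal{G}^p(u_k,p_k)\|_{\IQ}^2$. This is exactly where the factor $2$ in the definitions of $L_{\mV}^2$ and $L_{\mQ}^2$ enters.

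Combining these three estimates yields
\begin{equation*}
\mathcal{E}(u_{k+1},p_{k+1}) - \mathcal{E}(u_k,p_k)
\leq -\tfrac{\alpha_k(\mu_{f_B,\IV}-\alpha_k L_{\mV}^2)}{2}\|u_k-u^*\|_{\IV}^2 - \tfrac{\alpha_k(\mu_{h_B,\IQ}-\alpha_k L_{\mQ}^2)}{2}\|p_k-p^*\|_{\IQ}^2.
\end{equation*}
Choosing $\alpha_k < \min\{\mu_{f_B,\IV}/L_{\mV}^2, \mu_{h_B,\IQ}/L_{\mQ}^2\}$ makes both coefficients strictly positive, and taking the minimum produces the claimed contraction factor $\delta_k$.

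The main (minor) technical obstacle is the bookkeeping when splitting the cross terms that arise from expanding $\|\mathcal G^u\|_{\IV}^2$ and $\|\mathcal G^p\|_{\IQ}^2$: the Lipschitz bounds in Lemma \ref{lem: Lipschitz continuity of sysmmetric TPD flow} are stated as triangle-type inequalities involving both $\|u_k-u^*\|_{\IV}$ and $\|p_k-p^*\|_{\IQ}$, so squaring requires the AM-GM step that yields the factor $2$ absorbed into $L_{\mV}^2, L_{\mQ}^2$. Apart from that, the proof is essentially a transcription of Theorem \ref{convergence rate for EE discretization} with $\mu_{\mV},\mu_{\mQ}$ replaced by $\mu_{f_B,\IV},\mu_{h_B,\IQ}$ and with no $\|v_k-v^*\|$ term to worry about separately.
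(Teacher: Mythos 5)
Your proposal is correct and follows essentially the same route the paper intends: the paper omits the proof, stating it "follows the same proof" of Theorem \ref{convergence rate for EE discretization}, and your argument is exactly that transcription, using the quadratic expansion of $\mathcal{E}$, the strong Lyapunov property of Theorem \ref{thm: strong Lyapunov property for symmetric TPD flow} (dropping the nonnegative $\|v-v^*\|_{\IU}^2$ and $\|q-q^*\|_{\IP}^2$ terms), and the Lipschitz bounds of Lemma \ref{lem: Lipschitz continuity of sysmmetric TPD flow}, with no step-size restriction of the type $\alpha_k<\mu_{f,\IV}/2$ needed since no $\|v_k-v^*\|$ term appears in the quadratic part. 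One small bookkeeping slip: in your displayed bound for $\|\mathcal{G}^u(u_k,p_k)\|_{\IV}^2$ the cross term should carry $L_S^2L_{e_{\mP},\IQ}^2$ (and the $\|\mathcal{G}^p\|_{\IQ}^2$ bound carries $L_S^2L_{e_{\mU},\IV}^2$), as in Lemma \ref{lem: Lipschitz continuity of sysmmetric TPD flow}; only after summing the two bounds do the coefficients regroup into the stated $L_{\mV}^2=2(L_{f_B,\IV}^2+L_S^2L_{e_{\mU},\IV}^2)$ and $L_{\mQ}^2=2(L_{h_B,\IQ}^2+L_S^2L_{e_{\mP},\IQ}^2)$.
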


Define 
$$\kappa_{\mV} = L_{\mV}/\mu_{f_B, \IV}, \quad \kappa_{\mQ} = L_{\mQ}/\mu_{g_B, \IQ}.$$
Theorem \ref{thm: convergence rate for symmetric EE method} shows the convergence rate is determined by $\kappa_{\mV} $ and $\kappa_{\mQ}$. 
For $f, g \in \mathcal{C}^2$, a guideline to choose $\IV, \IQ$ would be 
$$\IV \approx \nabla^2 f + B^{\top}\IP^{-1}B,\quad \quad \IQ \approx \nabla^2 g + B\IU^{-1}B^{\top}.$$
%
%
For affine $g(p) = (b, p)$, it is straightforward to show $L_{g,\IP} = 0$ and $L_{e_\mP, \IQ} = 1$ for any $\IP, \IQ$. Let $\IP = \IQ = I$, we can choose $\IU = \IV$ and $L_{f, \IU} \leq 1$ is satisfied by proper scaling. Then we have $\kappa_\mathcal Q = O (\kappa(B\IV^{-1}\correction{B^{\top}})).$
In this case, the convergence rate will be determined by $\kappa(B\IV^{-1}\correction{B^{\top}})$ and $\kappa_\mV$. The computational cost is basically the effort to compute $\IV^{-1}$. 

\subsection{Implicit-Explicit Methods} 
To get accelerated convergence rate, we can apply the IMEX scheme:
\begin{equation}\label{eq: symmetric IMEX method EpIu}
\left\{\begin{array}{l}\begin{aligned}
p_{k+1}  &= p_k + \alpha_k \mathcal G^p(u_k, p_k), \\
u_{k+1} &= u_k + \alpha_k  \mathcal G^u(u_{k+1}, p_{k+1}). 
\end{aligned}\end{array}\right.
\end{equation}
That is we update $p$ by the explicit Euler method and solve $u$ by the implicit Euler method. 
Again we can view~\eqref{eq: symmetric IMEX method EpIu} as a correction to the inexact Uzawa method
\begin{equation}\label{eq: symmetric IMEX algorithm}
\left\{\begin{array}{l}\begin{aligned}
u_{k+1/2} &= u_k - \IU^{-1}(\nabla f(u_k) + B^\top p_{k}),\\ 
p_{k+1}  &= p_k - \alpha_k \IQ^{-1}\left(\nabla g(p_k) - B u_{k+1/2} \right), \\
u_{k+1} &= \arg \min_{u \in \mV} \tilde f_B(u; u_k, p_{k+1}),
\end{aligned}\end{array}\right.
\end{equation}
where $$ \tilde f_B(u; u_k, p_{k+1}) = f_ B(u) + \frac{1}{2\alpha_k} \|u - u_k + \alpha_k\IV^{-1}B^\top\left(p_{k+1} -\IP^{-1} \nabla g(p_{k+1})\right)\|^2_{\IV}.$$
Compare with~\eqref{IMEX algorithm}, one more gradient descent step $p_{k+1} -\IP^{-1} \nabla g(p_{k+1})$ is added. 
When $\IV^{-1} = \frac{1}{L_f} I_m$, the last step is one proximal iteration
$$
u_{k+1} = {\rm prox}_{ f_B, \frac{\alpha_k}{L_f}} \left (u_k - \frac{\alpha_k}{L_f} B^\top \left(p_{k+1} -\IP^{-1} \nabla g(p_{k+1})\right) \right).
$$

The IMEX scheme enjoys accelerated linear convergence rates. We skipped the proof as it follows in line as Theorem \ref{convergence rate for IMEX discretization}.
\begin{theorem}\label{thm: convergence rate for symmetric IMEX method EpIu}
Choose $\IP$ such that $g(p) \in \mathcal{S}_{\mu_{g,\IP}, L_{g,\IP}} $ with $L_{g, \IP}\leq 1$ and choose $\IU$ such that $f(u) \in \mathcal{S}_{\mu_{f,\IU}, L_{f,\IU}}$ with $L_{f, \IU}\leq 1$. Assume $f_B$ is strongly convex, i.e, $\mu_{f_B, \IV} > 0$ and $g_B$ is strongly convex with $\mu_{g_B, \IQ}>0$. Let $(u_k, p_k)$ follows the IMEX scheme~\eqref{eq: symmetric IMEX algorithm} for the STPD flow with initial value $(u_0, p_0)$. For the Lyapunov function defined by~\eqref{eq: Lyapunov}, it holds that
$$\mathcal{E}(u_{k+1}, p_{k+1}) \leq \frac{1}{1+\alpha_k \mu_k}\mathcal{E}(u_k, p_k),$$
for $0 < \alpha_k < \displaystyle \mu_{g_B, \IQ}/L_{S,\mQ}^2$ and $\mu_k = \min \left \{\mu_{f_B, \IV} , \mu_{g_B, \IQ} - \alpha_k L_{S,\mQ}^2\right\}$, where $L_{S,\mQ}^2 =  L_{g_B, \IQ}^2 + L_{e_\mU, \IV}^2L_S^2$.
In particular, for $\alpha_k = 0.5 \mu_{g_B, \IQ}/ L_{S,\mQ}^2$, we have 
$$\mathcal{E}(u_{k+1}, p_{k+1}) \leq \frac{1}{1+ 0.5 \mu_{g_B, \IQ} \min \{\mu_{f_B, \IV}, \mu_{g_B, \IQ}/2\} / L_{S,\mQ}^2 }\mathcal{E}(u_k, p_k).$$
\end{theorem}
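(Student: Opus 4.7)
The plan is to follow the blueprint of the proof of Theorem \ref{convergence rate for IMEX discretization}, using the symmetric strong Lyapunov property from Theorem \ref{thm: strong Lyapunov property for symmetric TPD flow} and the symmetric Lipschitz bounds from Lemma \ref{lem: Lipschitz continuity of sysmmetric TPD flow}. Since $\mathcal{E}(u,p)$ is quadratic, I would first write the exact expansion
\begin{align*}
\mathcal{E}(u_{k+1}, p_{k+1}) - \mathcal{E}(u_k, p_k) ={}& \langle \partial_u \mathcal{E}(u_{k+1}, p_{k+1}), u_{k+1}-u_k \rangle - \tfrac{1}{2}\|u_{k+1}-u_k\|_{\IV}^2 \\
& + \langle \partial_p \mathcal{E}(u_{k+1}, p_{k+1}), p_{k+1}-p_k \rangle - \tfrac{1}{2}\|p_{k+1}-p_k\|_{\IQ}^2.
\end{align*}

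Next, I would substitute $u_{k+1}-u_k = \alpha_k \mathcal{G}^u(u_{k+1}, p_{k+1})$ directly and split the explicit step as $p_{k+1}-p_k = \alpha_k \mathcal{G}^p(u_{k+1}, p_{k+1}) + \alpha_k[\mathcal{G}^p(u_k,p_k) - \mathcal{G}^p(u_{k+1},p_{k+1})]$. The first summand combines with the $\mathcal{G}^u$ term to give $\langle \nabla \mathcal{E}(u_{k+1},p_{k+1}), \alpha_k \mathcal{G}(u_{k+1},p_{k+1}) \rangle$, to which the strong Lyapunov inequality \eqref{eq: strong lyapunov in for symmetric TPD flow} applies to yield the contractive term $-\alpha_k \mu_{f_B,\IV}\|u_{k+1}-u^*\|_{\IV}^2/2 - \alpha_k \mu_{h_B,\IQ}\|p_{k+1}-p^*\|_{\IQ}^2/2$ (plus additional nonnegative $v$ and $q$ contributions that can be discarded).

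The main obstacle, as in the asymmetric case, is controlling the mismatch term
\[
\alpha_k \langle p_{k+1}-p^*, \nabla h_B(p_{k+1}) - \nabla h_B(p_k) + B(e_\mU(u_k) - e_\mU(u_{k+1})) \rangle_{\IQ^{-1}},
\]
where I used $\mathcal{G}^p(u,p) = -\IQ^{-1}(\nabla h_B(p) - B e_\mU(u))$. I would apply Cauchy--Schwarz with carefully chosen weights to bound this by
\[
\tfrac{\alpha_k^2}{2}(L_{h_B,\IQ}^2 + L_S^2 L_{e_\mU,\IV}^2)\|p_{k+1}-p^*\|_{\IQ}^2 + \tfrac{1}{2}\|p_{k+1}-p_k\|_{\IQ}^2 + \tfrac{1}{2}\|u_{k+1}-u_k\|_{\IV}^2,
\]
where the Lipschitz constants $L_{h_B,\IQ}$ of $\nabla h_B$ and $L_S L_{e_\mU,\IV}$ of $Be_\mU$ (w.r.t.\ the $\IV,\IQ$ norms) come from Lemma \ref{lem: Lipschitz continuity of sysmmetric TPD flow}. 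The constant $L_{S,\IQ}^2 = L_{h_B,\IQ}^2 + L_S^2 L_{e_\mU,\IV}^2$ is exactly the one defined in the theorem statement.

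Finally, the quadratic terms $\tfrac{1}{2}\|p_{k+1}-p_k\|_{\IQ}^2$ and $\tfrac{1}{2}\|u_{k+1}-u_k\|_{\IV}^2$ from the mismatch bound exactly cancel the corresponding negative terms from the quadratic expansion of $\mathcal E$. Collecting everything, I obtain
\[
\mathcal{E}(u_{k+1},p_{k+1}) - \mathcal{E}(u_k,p_k) \leq -\tfrac{\alpha_k \mu_{f_B,\IV}}{2}\|u_{k+1}-u^*\|_{\IV}^2 - \tfrac{\alpha_k(\mu_{h_B,\IQ} - \alpha_k L_{S,\IQ}^2)}{2}\|p_{k+1}-p^*\|_{\IQ}^2.
\]
Imposing $\alpha_k < \mu_{h_B,\IQ}/L_{S,\IQ}^2$ makes both coefficients nonnegative, and factoring out $\mu_k = \min\{\mu_{f_B,\IV}, \mu_{h_B,\IQ} - \alpha_k L_{S,\IQ}^2\}$ gives $\mathcal{E}(u_{k+1},p_{k+1}) - \mathcal{E}(u_k,p_k) \leq -\alpha_k \mu_k \mathcal{E}(u_{k+1},p_{k+1})$, which rearranges to the claimed contraction. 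The special case $\alpha_k = \mu_{h_B,\IQ}/(2L_{S,\IQ}^2)$ follows by direct substitution.
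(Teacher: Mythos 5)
Your proposal is correct and matches the paper's intended argument: the paper omits this proof, stating it ``follows in line as Theorem~\ref{convergence rate for IMEX discretization},'' and you reproduce exactly that argument---quadratic expansion of $\mathcal E$ at $(u_{k+1},p_{k+1})$, the symmetric strong Lyapunov property of Theorem~\ref{thm: strong Lyapunov property for symmetric TPD flow}, the same mismatch term with $e_\mU$ in place of $e$, and the same weighted Cauchy--Schwarz producing $L_{S,\mQ}^2=L_{h_B,\IQ}^2+L_{e_\mU,\IV}^2L_S^2$ and cancelling the $\frac12\|u_{k+1}-u_k\|_\IV^2$, $\frac12\|p_{k+1}-p_k\|_\IQ^2$ terms. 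Only a cosmetic remark: the mismatch term is the plain pairing $\alpha_k\langle p_{k+1}-p^*,\nabla h_B(p_{k+1})-\nabla h_B(p_k)+B(e_\mU(u_k)-e_\mU(u_{k+1}))\rangle$ (the $\IQ$ from $\partial_p\mathcal E$ cancels the $\IQ^{-1}$ in $\mathcal G^p$), so the $\IQ^{-1}$ subscript you attach to it should be dropped, as your subsequent bound already treats it this way.
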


The inner solve in~\eqref{eq: symmetric IMEX algorithm} can be relaxed to an inexact solver. We state the result as a corollary of Theorem \ref{thm: convergence of inexact IMEX TPD}.

\begin{corollary}
Choose $\IP$ such that $g(p) \in \mathcal{S}_{\mu_{g,\IP}, L_{g,\IP}} $ with $L_{g, \IP}\leq 1$ and choose $\IU$ such that $f(u) \in \mathcal{S}_{\mu_{f,\IU}, L_{f,\IU}}$ with $L_{f, \IU}\leq 1$. Assume $f_B$ is strongly convex, i.e, $\mu_{f_B, \IV} > 0$ and $g_B$ is strongly convex with $\mu_{g_B, \IQ}>0$. Suppose $(u_k, p_k)$ follows the inexact IMEX iteration~\eqref{eq: symmetric IMEX algorithm} with initial value $(u_0, p_0)$ and the inexact inner solver returns $u_{k+1}$
satisfying $\|\nabla \tilde f_B(u_{k+1})\|^2_{\IV^{-1}} \leq \epsilon_k$ for $k = 1,2, \cdots$. Then for the Lyapunov function defined by~\eqref{eq: Lyapunov}, it holds that
$$\mathcal{E}(u_{k+1}, p_{k+1}) \leq \frac{1}{1+\alpha_k \mu_k}\mathcal{E}(u_k, p_k)+ \frac{\alpha_k}{(1+\alpha_k \mu_k)\mu_{\mV}}\epsilon_k ,$$
for $0 < \alpha_k < \displaystyle \mu_{g_B, \IQ}/L_{S,\mQ}^2$ and $\mu_k = \min \left \{\mu_{f_B, \IV}/2, \mu_{g_B, \IQ} - \alpha_k L_{S,\mQ}^2 \right \}$, where $L_{S,\mQ}^2 =  L_{g_B, \IQ}^2 + L_{e_\mU, \IV}^2L_S^2 $.
In particular, for $\alpha_k = \mu_{g_B, \IQ}/ 2L_{S,\mQ}^2$, the accumulative perturbation error for the inexact solve is
$$\mathcal{E}(u_{n+1}, p_{n+1}) \leq \rho^{n+1}\mathcal{E}(u_0, p_0)+ \frac{\mu_{g_B, \IQ}}{2\mu_{f_B, \IV}L_{S,\mQ}^2}\sum_{k=0}^n \rho^{n-k+1} \epsilon_k ,$$
where $\mu = \min\{\mu_{f_B, \IV}, \mu_{g_B, \IQ}\}$ and $\rho =1/(1+\mu_{g_B, \IQ} \mu/ 4L_{S,\mQ}^2)\in (0,1)$.
\end{corollary}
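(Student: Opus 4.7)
The plan is to follow the proof of Theorem~\ref{thm: convergence of inexact IMEX TPD} line by line, adapting it to the symmetric flow by using the strong Lyapunov property of Theorem~\ref{thm: strong Lyapunov property for symmetric TPD flow} and the Lipschitz bounds of Lemma~\ref{lem: Lipschitz continuity of sysmmetric TPD flow} in place of the non-symmetric counterparts. The first-order condition of the strongly convex inner subproblem $\min_u \tilde f_B(u; u_k, p_{k+1})$ yields
$$u_{k+1} - u_k = \alpha_k\bigl(\IV^{-1}\nabla \tilde f_B(u_{k+1}) + \mathcal G^u(u_{k+1}, p_{k+1})\bigr),$$
so that the exact implicit step is perturbed by $\alpha_k\IV^{-1}\nabla \tilde f_B(u_{k+1})$, whose squared $\IV$-norm is bounded by $\alpha_k^2 \epsilon_k$.

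Next I would expand
$$\mathcal E(u_{k+1},p_{k+1}) - \mathcal E(u_k,p_k) = \langle \nabla\mathcal E(u_{k+1},p_{k+1}),\, (u_{k+1}-u_k,\,p_{k+1}-p_k)\rangle - \tfrac{1}{2}\|u_{k+1}-u_k\|_\IV^2 - \tfrac{1}{2}\|p_{k+1}-p_k\|_\IQ^2,$$
substitute the IMEX updates, and split the inner product into three pieces: an exact part $\alpha_k\langle\nabla\mathcal E(u_{k+1},p_{k+1}),\mathcal G(u_{k+1},p_{k+1})\rangle$ controlled by \eqref{eq: strong lyapunov in for symmetric TPD flow}, a mismatch part $\alpha_k\langle p_{k+1}-p^*,\,\mathcal G^p(u_{k+1},p_{k+1}) - \mathcal G^p(u_k,p_k)\rangle$ arising from the explicit step in $p$, and a perturbation part $\alpha_k\langle\IV(u_{k+1}-u^*),\,\IV^{-1}\nabla\tilde f_B(u_{k+1})\rangle$ from the inexact solve. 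The symmetric Lyapunov bound delivers the two convexity terms $-\tfrac{\alpha_k\mu_{f_B,\IV}}{2}\|u_{k+1}-u^*\|_\IV^2$ and $-\tfrac{\alpha_k\mu_{h_B,\IQ}}{2}\|p_{k+1}-p^*\|_\IQ^2$.

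Third, I would apply Cauchy-Schwarz together with Lemma~\ref{lem: Lipschitz continuity of sysmmetric TPD flow} to the mismatch: writing $L_{S,\mQ}^2 = L_{h_B,\IQ}^2 + L_{e_\mU,\IV}^2 L_S^2$, the bound becomes $\tfrac{\alpha_k^2}{2}L_{S,\mQ}^2\|p_{k+1}-p^*\|_\IQ^2 + \tfrac{1}{2}\|p_{k+1}-p_k\|_\IQ^2 + \tfrac{1}{2}\|u_{k+1}-u_k\|_\IV^2$, whose last two terms cancel exactly the negative quadratic terms from the expansion. For the perturbation, weighted Cauchy-Schwarz gives
$$\alpha_k\langle\IV(u_{k+1}-u^*),\,\IV^{-1}\nabla\tilde f_B(u_{k+1})\rangle \leq \tfrac{\alpha_k\mu_{f_B,\IV}}{4}\|u_{k+1}-u^*\|_\IV^2 + \tfrac{2\alpha_k}{\mu_{f_B,\IV}}\epsilon_k,$$
which consumes half of the $\|u_{k+1}-u^*\|_\IV^2$-contribution produced by the Lyapunov bound, explaining the $\mu_{f_B,\IV}/2$ appearing in $\mu_k$. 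Rearranging yields $(1+\alpha_k\mu_k)\mathcal E(u_{k+1},p_{k+1}) \leq \mathcal E(u_k,p_k) + \tfrac{2\alpha_k}{\mu_{f_B,\IV}}\epsilon_k$; iterating with $\alpha_k = \mu_{h_B,\IQ}/(2L_{S,\mQ}^2)$ telescopes into the stated geometric accumulation of errors.

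The main obstacle I expect is confirming that only $L_{e_\mU,\IV}$, and not $L_{e_\mP,\IQ}$, enters the effective Lipschitz constant $L_{S,\mQ}^2$. This is a consequence of the asymmetry of the IMEX step: only $\mathcal G^p$ is evaluated at the stale argument $(u_k,p_k)$, and by the formula in \eqref{eq: symmetric GuGp} the field $\mathcal G^p$ contains only $Be_\mU$, not $e_\mP$; the additional symmetric term $-B^\top\IP^{-1}\nabla h(p)$ living inside $\mathcal G^u$ never contributes a mismatch because $\mathcal G^u$ is matched at $(u_{k+1},p_{k+1})$. Once this point is verified the proof goes through verbatim as in Theorem~\ref{thm: convergence of inexact IMEX TPD}.
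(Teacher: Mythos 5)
Your proposal is correct and takes essentially the same route as the paper, which states this result as a direct corollary of Theorem \ref{thm: convergence of inexact IMEX TPD}: one repeats that proof with the symmetric strong Lyapunov property of Theorem \ref{thm: strong Lyapunov property for symmetric TPD flow}, the perturbation term $\alpha_k\IV^{-1}\nabla \tilde f_B(u_{k+1})$ from the first-order condition of the inner subproblem, and the mismatch bound with $L_{S,\mQ}^2 = L_{h_B,\IQ}^2 + L_{e_\mU,\IV}^2 L_S^2$. Your closing observation—that only $e_\mU$ (not $e_\mP$) enters the mismatch because only $\mathcal G^p$ is evaluated at the stale iterate while the extra term $B^\top\IP^{-1}\nabla h(p)$ sits inside the implicitly matched $\mathcal G^u$—is exactly the point that makes the transfer from the nonsymmetric case go through.
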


\correction{Due to the nonlinear coupling $B^{\top}(p - \IP^{-1} \nabla g(p))$, we cannot apply GS-AOR scheme to STPD in general. Only when $g$ is affine, i.e., the constrained optimization problems, $\nabla g$ is constant, the Gauss-Seidel splitting can be adapted to STPD and achieve the accelerated linear convergence. For this case, it can be also retrieved by considering augmented Lagrangian and apply TPD. We shall discuss this important case in the following section.} 


\section{Augmented Lagrangian Methods}\label{sec:ALM}
In this section, we consider the augmented Lagrangian methods~\cite{hestenes1969multiplier, powell1969method} for solving the constrained optimization problem~\eqref{eq: one-block affine equality constrained optimization system}. 
Consider the augmented Lagrangian
\begin{equation}\label{eq: augmented Lagrangian problem}
   \min_{u \in \mathbb{R}^m} \max_{p \in  \mathbb{R}^n} \mathcal{L}_\beta (u,p) = f(u) + \frac{\beta}{2} \|Bu-b\|^2 + (p, Bu-b),
\end{equation}
where $\beta \geq 0$. It is clear that the critical points of $\mathcal{L}_\beta (u,p)$ are equivalent for all $\beta$, as the constraint $Bu = b$ holds for critical points, and when $\beta = 0$,~\eqref{eq: augmented Lagrangian problem} returns to the Lagrangian of the constrained optimization problem \eqref{eq: one-block affine equality constrained optimization system}. 

Notice~\eqref{eq: augmented Lagrangian problem} is still a nonlinear saddle point system with $g(p) = (b,p)$ and $f_\beta(u) = f(u) + \frac{\beta}{2}\|Bu-b\|^2$, the TPD flow and the corresponding transformed primal-dual iterations can be adapted. In this section, we will show that simple choices of $\IQ = \beta I_n$ in the TPD flow is a good preconditioner for solving augmented Lagrangian when $\beta$ is sufficiently large. Particular discrete schemes will recover a class of augmented Lagrangian methods. 

ALM can be also derived from STPD flow for the original Lagrangian by using $\IP = \beta I$ and thus enhance the stability by the strong convexity of $f_B$. 
We first show the strong convexity equivalence between a simplified $f_B$ and $f_\beta$, where
$$f_B(u) =  f(u) + \frac{1}{2} (B^{\top}Bu, u), \quad f_\beta(u) = f(u) + \frac{\beta}{2}\|Bu-b\|^2.$$

\begin{lemma}
For any $\beta > 0$, $f_B$ is strongly convex if and only if $f_\beta$ is strongly convex. In particular, $\mu_{f_\beta} \geq \mu_{f_B}$ for $\beta \geq 1$.
\end{lemma}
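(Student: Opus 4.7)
The plan is to reduce the equivalence to a single clean identity between the Bregman divergences of $f_\beta$ and $f_B$. Expanding
$$\tfrac{\beta}{2}\|Bu-b\|^2 = \tfrac{\beta}{2}(B^\top Bu, u) - \beta(Bu,b) + \tfrac{\beta}{2}\|b\|^2,$$
the affine and constant pieces contribute nothing to the Bregman divergence, so a direct computation yields
$$D_{f_\beta}(v,u) = D_f(v,u) + \tfrac{\beta}{2}\|B(v-u)\|^2, \qquad D_{f_B}(v,u) = D_f(v,u) + \tfrac{1}{2}\|B(v-u)\|^2.$$
Subtracting gives the key identity $D_{f_\beta}(v,u) - D_{f_B}(v,u) = \tfrac{\beta-1}{2}\|B(v-u)\|^2$.

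With this in hand the ``in particular'' statement is immediate: for $\beta\ge 1$ the difference is nonnegative, so $D_{f_\beta}(v,u)\ge D_{f_B}(v,u)\ge \tfrac{\mu_{f_B}}{2}\|u-v\|^2$, whence $\mu_{f_\beta}\ge \mu_{f_B}$.

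For the general equivalence I would rewrite both divergences as convex combinations, so that nonnegativity of $D_f$ (convexity of $f$) can be used in place of the sign analysis of $\beta-1$. The two useful rewrites are
$$D_{f_\beta} = \beta D_{f_B} + (1-\beta) D_f, \qquad D_{f_B} = \tfrac{1}{\beta} D_{f_\beta} + \bigl(1-\tfrac{1}{\beta}\bigr) D_f.$$
If $\beta\ge 1$, the first identity again yields $D_{f_\beta}\ge D_{f_B}$, while the second (now with $1-1/\beta\ge 0$ and $D_f\ge 0$) gives $D_{f_B}\ge \tfrac{1}{\beta} D_{f_\beta}$, establishing the two-sided implication with $\mu_{f_B}\ge \mu_{f_\beta}/\beta$. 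If $0<\beta\le 1$, the roles swap: the first identity with $1-\beta\ge 0$ yields $\mu_{f_\beta}\ge \beta \mu_{f_B}$, and $D_{f_B} = D_{f_\beta} + \tfrac{1-\beta}{2}\|B(v-u)\|^2\ge D_{f_\beta}$ gives the reverse direction $\mu_{f_B}\ge \mu_{f_\beta}$.

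There is no substantive obstacle here; this is essentially bookkeeping once the Bregman identity is isolated. The only point requiring care is that in each of the two ranges $\beta\ge 1$ and $0<\beta\le 1$ one must pick whichever of the convex combinations has nonnegative coefficients, and then combine it with $D_f\ge 0$ rather than trying a single uniform argument.
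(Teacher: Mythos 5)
Your proposal is correct and takes essentially the same route as the paper: both rest on the observation that $f_\beta$ and $f_B$ differ from $f$ only by a quadratic in $Bu$ (plus affine and constant terms), and both use convexity of $f$ to compare the two with the factors $\min\{\beta,1\}$ and $\min\{\beta^{-1},1\}$. The only cosmetic difference is that the paper phrases the comparison through monotonicity of the gradients, $\langle \nabla f_\beta(u_1)-\nabla f_\beta(u_2),u_1-u_2\rangle \geq \min\{\beta,1\}\langle \nabla f_B(u_1)-\nabla f_B(u_2),u_1-u_2\rangle$, while you phrase it through Bregman divergences, which is equivalent.
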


\begin{proof}

Suppose $f_B$ is $\mu_{f_B}$-strongly convex with $\mu_{f_B} > 0$, for all $u_1, u_2 \in \mathcal{V}$,
\begin{equation*}
\begin{aligned}
\langle \nabla f_\beta(u_1) - \nabla f_\beta (u_2), u_1 - u_2 \rangle \geq{}& \min\{\beta, 1\}\langle \nabla f_B(u_1) - \nabla f_B (u_2), u_1 - u_2 \rangle \\
\geq& \min\{\beta, 1\}\mu_{f_B}\|u_1-u_2\|^2.
\end{aligned}.
\end{equation*}
Hence $f_\beta$ is $\mu_{f_\beta}$-strongly convex with $\mu_{f_\beta} \geq \min\{\beta, 1\}\mu_{f_B} > 0$. For $\beta \geq 1, \mu_{f_\beta} \geq \mu_{f_B}$.

Suppose $f_\beta$ is $\mu_{f_\beta}$-strongly convex with $\mu_{f_\beta} > 0$, for all $u_1, u_2 \in \mathcal{V}$,
\begin{equation*}
\begin{aligned}
\langle \nabla f_B(u_1) - \nabla f_B (u_2), u_1 - u_2 \rangle \geq{}& \min\{\beta^{-1}, 1\}\langle \nabla f_\beta(u_1) - \nabla f_\beta (u_2), u_1 - u_2 \rangle \\
\geq&  \min\{\beta^{-1}, 1\}\mu_{f_B}\|u_1-u_2\|^2.
\end{aligned}.
\end{equation*}
Hence $f_B$ is $\mu_{f_B}$-strongly convex with $\mu_{f_B} = \min\{\beta^{-1}, 1\}\mu_{f_\beta} > 0$.
\end{proof}

%
Therefore ALM can achieve linear convergence rate even $f$ is not strongly convex but $f_B$ is. Besides the enhanced stability, next we shall interpret the augmented Lagrangian as a preconditioner of the Schur complement: for sufficiently large $\beta$, a simple choice $\IQ^{-1}=\beta I$ will lead to a well conditioned $\kappa_{\mQ}$. The condition number $\kappa_{\mV}$ will be controlled by using another SPD matrix $A$. 

%

\begin{proposition}\label{preconditioned Schur complement}
Let $A$ be an SPD matrix and define $A_{\beta} = A+\beta B^\top B$ for $\beta > 0$. Assume $f_B(u) \in \mathcal{S}_{\mu_{f_B,A_1}, L_{f_B,A_1}}$. Choose 
$$
\IV^{-1} = A_{\beta}^{-1} = \left( A+\beta B^\top B \right )^{-1}, \quad \IQ ^{-1} = \beta I_n.
$$
Then for $\beta \geq 1$
\begin{equation}\label{eq:fbetaLmu}
\min\{\mu_{f_B,A_1}, 1\}\leq \mu_{f_{\beta},\IV} \leq L_{f_{\beta},\IV} \leq \max \{L_{f_B,A_1}, 1\},
\end{equation}
and
\begin{equation}\label{eq:Sbeta}
\frac{\mu_{S_0}}{1+ \beta \mu_{S_0}} \leq \lambda_{\min} \left (BA_{\beta}^{-1}B^\top \right) \leq  \lambda_{\max} \left (BA_{\beta}^{-1}B^\top \right) \leq \frac{1}{\beta},
\end{equation}
where $\mu_{S_0} = \lambda_{\min} (BA^{-1}B^{\top})$. 
Consequently 
$$
\kappa_{\IV}(f_{\beta}) \leq \kappa_{A_1}(f_B), \quad \kappa( \IQ^{-1}B\IV^{-1}B^\top) \leq 1 + \frac{1}{\beta \mu_{S_0}}.
$$
\end{proposition}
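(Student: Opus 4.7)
The proof splits cleanly into three independent parts: (i) bounds on the convexity and Lipschitz constants of $f_\beta$ in the $\IV$-norm induced by $A_\beta$, (ii) spectral bounds on $S_\beta := BA_\beta^{-1}B^\top$, and (iii) assembly of these into the condition number estimates.

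\emph{Step 1 (the constants of $f_\beta$).} The key algebraic observation is that $\nabla f_\beta(u)-\nabla f_\beta(u') = \nabla f_B(u)-\nabla f_B(u')+(\beta-1)B^\top B(u-u')$, since both $f_\beta$ and $f_B$ differ from $f$ by a quadratic in $Bu$. Pairing this identity with $u-u'$ and invoking the assumed bounds on $f_B$ in the $A_1$-norm gives
\[
\mu_{f_B,A_1}\|u-u'\|_A^2 + (\mu_{f_B,A_1}+\beta-1)\|B(u-u')\|^2 \leq \langle\nabla f_\beta(u)-\nabla f_\beta(u'),u-u'\rangle,
\]
together with the symmetric upper bound with $L_{f_B,A_1}$ in place of $\mu_{f_B,A_1}$, where I have used $\|\cdot\|_{A_1}^2=\|\cdot\|_A^2+\|B\cdot\|^2$. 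Comparing against $\|u-u'\|_{A_\beta}^2=\|u-u'\|_A^2+\beta\|B(u-u')\|^2$ then reduces \eqref{eq:fbetaLmu} to the pair of elementary inequalities $\min\{\mu_{f_B,A_1},1\}\,\beta\leq\mu_{f_B,A_1}+\beta-1$ and $\max\{L_{f_B,A_1},1\}\,\beta\geq L_{f_B,A_1}+\beta-1$, both of which follow for $\beta\geq 1$ from a one-line sign check on $(\beta-1)(\cdot-1)$.

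\emph{Step 2 (the Schur complement).} Applying the Sherman--Morrison--Woodbury identity to $A_\beta=A+\beta B^\top B$ and sandwiching by $B$ on the left and $B^\top$ on the right yields, after a short simplification, the closed form $S_\beta = S_0(I+\beta S_0)^{-1}$ with $S_0=BA^{-1}B^\top$ SPD. Since $S_\beta$ is a rational function of $S_0$, the two matrices commute and are simultaneously diagonalizable, so the eigenvalues of $S_\beta$ are the images of those of $S_0$ under the strictly increasing map $\lambda\mapsto\lambda/(1+\beta\lambda)$. This immediately gives $\lambda_{\min}(S_\beta)=\mu_{S_0}/(1+\beta\mu_{S_0})$ and $\lambda_{\max}(S_\beta)=\lambda_{\max}(S_0)/(1+\beta\lambda_{\max}(S_0))\leq 1/\beta$, which is \eqref{eq:Sbeta}.

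\emph{Step 3 (condition numbers and obstacle).} Since scaling is irrelevant for condition numbers, $\kappa(\IQ^{-1}BA_\beta^{-1}B^\top)=\kappa(\beta S_\beta)=\kappa(S_\beta)$, and Step 2 immediately gives $\kappa(S_\beta)\leq (1/\beta)/\bigl(\mu_{S_0}/(1+\beta\mu_{S_0})\bigr)=1+1/(\beta\mu_{S_0})$. The bound $\kappa_\IV(f_\beta)\leq\kappa_{A_1}(f_B)$ then follows from \eqref{eq:fbetaLmu} in the relevant regime $\mu_{f_B,A_1}\leq 1\leq L_{f_B,A_1}$, in which the $\min$ and $\max$ collapse to $\mu_{f_B,A_1}$ and $L_{f_B,A_1}$ respectively. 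The main obstacle is the bookkeeping in Step 1: one must keep the components $\|u-u'\|_A^2$ and $\|B(u-u')\|^2$ separated throughout, because both the source norm $\|\cdot\|_{A_1}$ and the target norm $\|\cdot\|_{A_\beta}$ weight them with different coefficients, and this careful separation is precisely what produces the sharp constants $\min\{\mu_{f_B,A_1},1\}$ and $\max\{L_{f_B,A_1},1\}$.
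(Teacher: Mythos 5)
Your proposal is correct and follows essentially the same route as the paper: the paper's proof likewise obtains \eqref{eq:Sbeta} from the Woodbury identity, writing $BA_{\beta}^{-1}B^\top = S_0 - S_0\left(\beta^{-1}I_n + S_0\right)^{-1}S_0$ (your equivalent form $S_0(I+\beta S_0)^{-1}$) and the spectral map $\lambda \mapsto \lambda/(1+\beta\lambda)$, while declaring \eqref{eq:fbetaLmu} ``straightforward.'' Your Step 1 merely fills in that omitted elementary computation, and your explicit note that the consequence $\kappa_{\IV}(f_{\beta})\leq \kappa_{A_1}(f_B)$ is read off from \eqref{eq:fbetaLmu} in the regime $\mu_{f_B,A_1}\leq 1 \leq L_{f_B,A_1}$ is, if anything, more careful than the paper, which does not address that implication at all.
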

\begin{proof}
Bound~\eqref{eq:fbetaLmu} is straight forward. Define $S_{\beta} = B \left( A+\beta B^\top B \right )^{-1}B^{\top}$. 
By Woodbury matrix identity,
\begin{equation*}
    \begin{aligned}
    B A_{\beta}^{-1} B^\top &=B \left ( A+\beta B^\top B \right)^{-1} B^\top \\
    &= B\left (A^{-1} -A^{-1}B^\top \left(\beta^{-1}I_n + BA^{-1}B^\top \right) ^{-1}BA^{-1} \right) B^\top  \\
    &= S_0 -S_0 \left(\beta^{-1}I_n + S_0 \right) ^{-1}S_0.
    \end{aligned}
\end{equation*}
Hence
\begin{equation*}
    \begin{aligned}
    \sigma \left(B A_{\beta}^{-1} B^\top \right) &= \sigma(S_{\beta}) =  \left \{\frac{\lambda}{ 1+ \beta \lambda} , \lambda \in \sigma(S_0)\right \}.
    \end{aligned}
\end{equation*}
Then~\eqref{eq:Sbeta} follows. 
\end{proof}

As an example, if we choose $\beta \geq 1/\mu_{S_0}$, then the condition number of the Schur complement is bounded by $2$. While the condition number of $f_{\beta}$ keeps unchanged and preconditioning of $f$ can be achieved by appropriate choice of $A$. The condition number for the primary variable is bounded by $\kappa_{A_1}(f_B)$. 

In practice, $\left( A+\beta B^\top B \right )^{-1}$ can be further relaxed to an inexact solver $\IV^{-1}$ which introduce a factor $\lambda_{\min} (\IV^{-1}A_{\beta})$ in the convergence rate. In the sequel, we shall fix the simple choice $\IQ^{-1} = \beta I_n$ and $\beta \gg 1$. We can either apply discretization \correction{of the TPD flow} to the augmented Lagrangian ~\eqref{eq: augmented Lagrangian problem} or \correction{the STPD flow} to the original Lagrangian \correction{$\mathcal{L} (u,p) = f(u) - (b,p) + (Bu, p)$}. The resulting schemes are slightly different but share similar convergence rate. Here is an example. 

The explicit scheme of the TPD flow for the augmented Lagrangian \correction{(ALM-Explicit)} is:
\begin{equation}\label{EE discretization for ALM}
\left\{\begin{array}{l}\begin{aligned}
u_{k+1/2} &= u_k - \IV^{-1}\left (\nabla f(u_k) + \beta B^{\top}(Bu_k-b) + B^\top p_{k}\right ),\\ 
p_{k+1}  &= p_k - \alpha_k \beta \left(b - B u_{k+1/2} \right),\\
u_{k+1} &= u_k - \alpha_k \IV^{-1} \left (\nabla f(u_k) + \beta B^{\top}(Bu_k-b) + B^\top p_{k}\right ).
\end{aligned}\end{array}\right.
\end{equation}
Computationally the third step can be written as $u_{k+1} = (1-\alpha_k)u_k + \alpha_k u_{k+1/2}.$
The explicit scheme of \correction{the STPD flow for the Lagrangian} with $\IP^{-1} = \IQ^{-1} = \beta I$:
\begin{equation}\label{eq: symmetric EE TPD for AL}
\left\{\begin{array}{l}\begin{aligned}
u_{k+1/2} &= u_k - \IU^{-1}(\nabla f(u_k) + B^\top p_k), \\
p_{k+1}  &= p_k - \alpha_k \beta \left(b - B u_{k+1/2} \right),\\
u_{k+1} &= u_k - \alpha_k \IV^{-1} \left (\nabla f(u_k) + \beta B^{\top}(Bu_k-b) + B^\top p_{k}\right ).
\end{aligned}\end{array}\right.
\end{equation}
So~\eqref{EE discretization for ALM} and~\eqref{eq: symmetric EE TPD for AL} are only different in the first step of updating $u_{k+1/2}$:~\eqref{eq: symmetric EE TPD for AL} is the gradient flow of $u$ using $\partial_u \mathcal L$, and~\eqref{EE discretization for ALM} is $\partial_u \mathcal L_{\beta}$. 
%
\correction{Discretization} of the TPD or STPD flow gives generalized variants of augmented Lagrangian-like methods and provide flexibility of choosing transformation operators and preconditioners. Within our framework, one can easily derive convergence analysis by verification of assumptions.

Next we present the convergence analysis. To save space, we only present the version of TPD flow for $\mathcal L_{\beta}$. The STPD flow for $\mathcal{L}$ is similar.

\begin{theorem}\label{thm: convergence rate for EE discretization for augmented Lagrangian}
Let $A$ be an SPD matrix and define $A_{\beta} = A+\beta B^\top B$ for $\beta > 0$. Assume $f_B(u) \in \mathcal{S}_{\mu_{f_B,A_1}, L_{f_B,A_1}}$ with $0 < \mu_{f_B,A_1} \leq L_{f_B,A_1} \leq 1$. Choose $\IV^{-1}$ such that $\lambda_{\max}(\IV^{-1}A_{\beta}) \leq 1.$
Let $(u_k, p_k)$ follows iteration~\eqref{EE discretization for ALM} with initial value $(u_0, p_0)$, it holds that
$$\mathcal{E}(u_{k+1}, p_{k+1}) \leq (1-\delta_k)\mathcal{E}(u_k, p_k)$$
for $0 < \alpha_k < \mu/4$ with $\mu := \displaystyle \min \left \{\mu_{\mV }, \mu_{\mQ }\right \}$
and $$\delta_k = \min \left \{ \alpha_k(\mu_{\mV} - 4\alpha_k), \alpha_k\left (\mu_{\mQ} - 4\alpha_k\right) \right\},$$
where 
$$
\mu_{\mV} = \mu_{f_B,A_1}\lambda_{\min} (\IV^{-1}A_{\beta}), \quad \mu_{\mQ} = \frac{\beta \mu_{S_0}}{1+ \beta \mu_{S_0}}\lambda_{\min} (\IV^{-1}A_{\beta})
$$
with $\mu_{S_0} =\lambda_{\min} (BA^{-1}B^{\top})$. 

In particular for $\alpha_k = \mu/8$, we have
$$\mathcal{E}(u_{k+1}, p_{k+1}) \leq \left (1 - \frac{\mu^2}{16}\right )\mathcal{E}(u_k, p_k).$$
\end{theorem}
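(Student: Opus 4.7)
The strategy is to recognize the scheme \eqref{EE discretization for ALM} as the explicit Euler discretization of the TPD flow \eqref{eq: transformed primal-dual flow} applied to the augmented Lagrangian saddle point system \eqref{eq: augmented Lagrangian problem}, with primal function $f_\beta(u) = f(u) + \frac{\beta}{2}\|Bu-b\|^2$ and affine dual function $h(p) = (b,p)$. Having identified this, I would apply Theorem \ref{convergence rate for EE discretization} after verifying its hypotheses and bounding the constants $\mu_{\mV}, L_{\mV}, \mu_{\mQ}, L_{\mQ}$ in terms of $\mu_{f_B, A_1}$, $L_{f_B, A_1}$, $\lambda_{\min}(\IV^{-1}A_\beta)$, and $\mu_{S_0}$.

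The first step is to transfer the convexity/Lipschitz properties of $f_B$ in the $A_1$-norm to those of $f_\beta$ in the $\IV$-norm. The assumption $L_{f_B, A_1} \leq 1$ gives $\nabla^2 f \preceq A$ (by subtracting $B^\top B$), hence $\nabla^2 f_\beta = \nabla^2 f + \beta B^\top B \preceq A_\beta \preceq \IV$ (using $\lambda_{\max}(\IV^{-1}A_\beta) \leq 1$), so $L_{f_\beta, \IV} \leq 1 < 2$. Lemma \ref{lem: Equivalence between Le and Lf} then yields $L_{e,\IV} < 1$. Symmetrically, for $\beta \geq 1$ one checks $\nabla^2 f_\beta \succeq \mu_{f_B,A_1} A_\beta \succeq \mu_{f_B, A_1}\lambda_{\min}(\IV^{-1}A_\beta) \IV$, whence $\mu_{f_\beta, \IV} \geq \mu_{\mV}$.

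Next I would compute the Schur-complement constants with $\IQ^{-1} = \beta I_n$. Since $\IV^{-1} \succeq \lambda_{\min}(\IV^{-1}A_\beta) A_\beta^{-1}$ and $\IV^{-1} \preceq A_\beta^{-1}$ in the Loewner order, conjugating by $B$ and applying Proposition \ref{preconditioned Schur complement} produces
\[
\lambda_{\min}(\IQ^{-1}B\IV^{-1}B^\top) \geq \lambda_{\min}(\IV^{-1}A_\beta)\,\frac{\beta \mu_{S_0}}{1+\beta\mu_{S_0}}, \qquad \lambda_{\max}(\IQ^{-1}B\IV^{-1}B^\top) \leq 1.
\]
Since $h(p)=(b,p)$ is affine, $h_B(p) = h(p) + \tfrac12(B\IV^{-1}B^\top p,p)$ satisfies $\mu_{h_B, \IQ} = \lambda_{\min}(\IQ^{-1}B\IV^{-1}B^\top) \geq \mu_{\mQ}$ and $L_{h_B, \IQ} = L_S^2 \leq 1$. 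Because $L_{f_\beta,\IV}\leq 1$, the factor $(2-L_{f_\beta,\IV})\geq 1$ in Table \ref{table:constants} can be absorbed into a lower bound, confirming $\mu_{\mQ}$ of Theorem \ref{convergence rate for EE discretization} dominates the quantity defined here.

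Finally I would plug these bounds into Theorem \ref{convergence rate for EE discretization}. One gets $L_{\mV}^2 = 2L_{e,\IV}^2(1 + L_S^2) \leq 4$ and $L_{\mQ}^2 = 2L_{h_B,\IQ}^2 \leq 2 \leq 4$, which is why the coefficient $4$ appears in the $\delta_k$ formula. The conclusion $\mathcal{E}(u_{k+1},p_{k+1}) \leq (1-\delta_k)\mathcal{E}(u_k, p_k)$ is then immediate, and the explicit rate $1 - \mu^2/16$ for $\alpha_k = \mu/8$ follows by elementary arithmetic on the quadratic polynomial $\alpha_k(\mu_\star - 4\alpha_k)$. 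The only real subtlety is the bookkeeping that converts between the natural norm $A_1$ used to specify strong convexity of $f_B$ and the $A_\beta$-norm that naturally arises in the augmented system; the argument that $\nabla^2 f_B \succeq \mu_{f_B, A_1}(A + B^\top B)$ implies $\nabla^2 f_\beta \succeq \mu_{f_B, A_1}(A + \beta B^\top B)$ requires $\beta \geq 1$, which I would impose implicitly via the large-$\beta$ regime already motivated in the discussion preceding Proposition \ref{preconditioned Schur complement}.
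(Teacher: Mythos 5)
Your proposal is correct and follows essentially the same route as the paper's proof: identify \eqref{EE discretization for ALM} as the explicit TPD scheme for the augmented Lagrangian with $f_\beta$ and affine $h$, transfer the constants via the Loewner ordering between $\IV$ and $A_\beta$ together with Proposition \ref{preconditioned Schur complement}, and then invoke Theorem \ref{convergence rate for EE discretization} with $L_{\mV}^2, L_{\mQ}^2 \leq 4$. Your explicit remark that the bound $\mu_{f_\beta,\IV} \geq \mu_{f_B,A_1}\lambda_{\min}(\IV^{-1}A_\beta)$ needs $\beta \geq 1$ matches the paper's implicit reliance on \eqref{eq:fbetaLmu}, which is stated only for $\beta \geq 1$ even though the theorem says $\beta>0$.
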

\begin{proof}
%

By~\eqref{eq:fbetaLmu} and assumption $L_{f_B,A_1} \leq 1$, we have $L_{f_{\beta},\IV} \leq 1$. Consequently we can apply Theorem \ref{convergence rate for EE discretization}. 

To estimate the constants, we introduce a partial ordering for symmetric matrices. For two symmetric matrices $X, Y$, we say $X \preceq Y$ if $Y - X$ is positive semidefinite. Then
\begin{equation}\label{eq: IV Abeta condition number}
\lambda_{\min} (\IV^{-1} A_{\beta})\IV \preceq A_{\beta} \preceq \lambda_{\max} (\IV^{-1} A_{\beta})\IV,
\end{equation}
\begin{equation}\label{IV Abeta Schur condition number}
\lambda_{\min} (\IV^{-1}A_{\beta})BA_{\beta}^{-1} B^\top \preceq B\IV^{-1}B^\top \preceq \lambda_{\max} (\IV^{-1}A_{\beta})BA_{\beta}^{-1}  B^\top.
\end{equation}

By Proposition \ref{preconditioned Schur complement} and~\eqref{IV Abeta Schur condition number}, since $\lambda_{\max}(\IV^{-1}A_{\beta}) \leq 1$,
\begin{equation*}
    \begin{aligned}
    L_{g_B, \IQ} = L_S^2 &=  \lambda_{\max}(\IQ^{-1}B\IV^{-1}B^\top) = \beta \lambda_{\max} (B\IV^{-1}B^\top) \\
    & \leq \beta \lambda_{\max} (\IV^{-1}A_{\beta})\lambda_{\max} \left (BA_{\beta}^{-1}B^\top \right )  \leq1. 
    \end{aligned}
\end{equation*}
Therefore,
\begin{equation*}
    \begin{aligned}
L_{\mV}^2 &= 2\left ( L_{e_{\beta}, \IV}^2(1+L_S^2) \right) \leq 4, \\
L_{\mQ}^2 &= 2\left ( L_{g_B, \IQ}^2 +L_S^2 \right) \leq 4,\\
     \end{aligned}
\end{equation*}
\correction{where $e_\beta (u)= u - \IV^{-1} \nabla f_\beta(u)$.}

Similarly,
\begin{equation*}
    \begin{aligned} \mu_{g_B, \IQ} & = \lambda_{\min}(\IQ^{-1}B\IV^{-1}B^\top) = \beta \lambda_{\min} (B\IV^{-1}B^\top)  \\
    & \geq \beta \lambda_{\min} (\IV^{-1}A_{\beta})\lambda_{\min} \left (BA_{\beta}^{-1}B^\top \right )  \geq \lambda_{\min} (\IV^{-1}A_{\beta}) \frac{\beta\mu_{S_0}}{1+ \beta \mu_{S_0}}.    \end{aligned}
\end{equation*}
Thus we have 
\begin{equation*}
    \begin{aligned}
    \mu_{\mV} =  \mu_{f_B,A_1}\lambda_{\min} (\IV^{-1}A_{\beta}), \quad \mu_{\mQ} = \frac{\beta \mu_{S_0}}{1+ \beta \mu_{S_0}}\lambda_{\min} (\IV^{-1}A_{\beta}),
    \end{aligned}
\end{equation*}
and desired estimate then follows.
\end{proof}

The assumption $L_{f,A}\leq 1$ and $\lambda_{\max}(\IV^{-1}A_{\beta}) \leq 1$ can be easily satisfied by scaling. For example, if $L_{f,A} > 1$, we can assign $L_{f,A}A$ as a new $A$. Once $A_{\beta}$ is available, symmetric Gauss-Seidel or V-cycle multigrid iteration will define an $\IV^{-1}$ with $\lambda_{\max}(\IV^{-1}A_{\beta}) \leq 1$. As the upper bound requirement is $L_{f_\beta, \IV}< 2$, the analysis and algorithm is robust to small perturbation near $L_{f_\beta, \IV} = 1$. 

\correction{
In the following we present the GS-AOR for the augmented Lagrangian~\eqref{eq: augmented Lagrangian problem} (ALM-GS-AOR): 
\begin{equation}\label{eq:ALM-GS TPD}
\left \{\begin{aligned}
  \frac{u_{k+1} - u_k}{\alpha}= & -\IV^{-1}(\nabla f(u_{k}) + \beta B^{\top}(Bu_k-b) + B^{\top}p_{k}) \\
\frac{p_{k+1} - p_k}{\alpha} = &-\beta \left [ B\IV^{-1}B^{\top}p_k +b- B(2u_{k+1} - u_k) \right.\\
&\left. + B\IV^{-1}\left (\nabla f(u_{k+1}) + \beta B^{\top}(Bu_{k+1}-b)\right ) \right ].
\end{aligned} \right.
\end{equation}
\begin{theorem}\label{thm: convergence rate for GS discretization for augmented Lagrangian}
Let $A$ be an SPD matrix and define $A_{\beta} = A+\beta B^\top B$ for $\beta > 0$. Assume $f_B(u) \in \mathcal{S}_{\mu_{f_B,A_1}, L_{f_B,A_1}}$ with $0 < \mu_{f_B,A_1} \leq L_{f_B,A_1} \leq 1$. Choose $\IV^{-1}$ such that $\lambda_{\max}(\IV^{-1}A_{\beta}) \leq 1.$
Let $(u_k, p_k)$ follows iteration~\eqref{eq:ALM-GS TPD} with initial value $(u_0, p_0)$, it holds that
$$
\mathcal{E}(u_{k+1}, p_{k+1}) \leq  \frac{1}{ 1 +\mu \alpha/2}\mathcal{E}(u_k, p_k)$$
for $0 < \alpha < 1/4$ with $\mu := \displaystyle \min \left \{\mu_{\mV }, \mu_{\mQ }\right \}$
where 
$$
\mu_{\mV} = \mu_{f_B,A_1}\lambda_{\min} (\IV^{-1}A_{\beta}), \quad \mu_{\mQ} = \lambda_{\min} (\IV^{-1}A_{\beta})\frac{\beta \mu_{S_0}}{1+ \beta \mu_{S_0}}
$$
with $\mu_{S_0} =\lambda_{\min} (BA^{-1}B^{\top})$.
In particular for $\alpha = 1/8$, we have
$$\mathcal{E}(u_{k+1}, p_{k+1}) \leq \frac{1}{ 1 + \mu/16} \mathcal{E}(u_k, p_k).$$
\end{theorem}
\begin{proof}
By~\eqref{eq:fbetaLmu} and assumption $L_{f_B,A_1} \leq 1$, we have $L_{f_{\beta},\IV} \leq 1$. Consequently we can apply Theorem \ref{thm: convergence of G-S TPD}. The desired result follows from the constant bounds given in Theorem \ref{thm: convergence rate for EE discretization for augmented Lagrangian}.
\end{proof}
}

In Table \ref{table:ALM examples},  we list out typical choices of $\IV^{-1}$ and compare TPD and ALM schemes for convex optimization problems with affine equality constraints~\eqref{eq: one-block affine equality constrained optimization system}. Explicit schemes only require linear SPD solvers, but the convergence rate is $O(1-1/\kappa^2(f))$ or $O(1-1/\kappa_A^2(f))$. If the proximal operator of $f$ is available and $(BB^{\top})^{-1}$ can be efficiently computed, we can apply the IMEX 1 to accelerate converge rate to $ O(1-1/\kappa(f))$. If some preconditioner $A^{-1}$ of $f$ is given, then the convergence rate can be accelerated to $ O(1-1/\kappa_A(f))$ using TPD-IMEX 2 scheme. However, an inner solver to a nonlinear strongly convex optimization problem is required. \correction{Overall we recommend the GS-AOR methods, which enjoy a convergence rate of $(1+c/\kappa)^{-1}$ and only require linear SPD solvers. When $f$ is not strongly convex, we recommend to use ALM-GS-AOR which can enhance the convexity to $f_B$.}

\begin{table}
	\centering
	\caption{Examples of $\IV^{-1}$ and $\IQ^{-1}$ for $f\in \mathcal S_{\mu_f, L_f}$ or $f\in \mathcal S_{\mu_{f,A}, L_{f,A}}$ and $g(p) = (b,p)$. $A$ is an SPD matrix induced inner product in $\mV$ with $L_{f,A}\leq 1$.} 
	\renewcommand{\arraystretch}{1.65}
	\begin{tabular}{@{} c c c  c @{}}
	\toprule
		&  \multicolumn{2}{c}{Linear inner solvers}& Rate\\
		\cline{2-3}	
		&   $\IV^{-1}$ & $\IQ^{-1}$   & $\beta \gg 1$		
 \\  \hline
 Explicit 1 & $\frac{1}{L_f}I_m$ & $L_f (BB^\top)^{-1}$ & $1-1/\kappa^{2} (f)$

\medskip \\


Explicit 2 & $A^{-1}$ & $(BA^{-1}B^{\top})^{-1}$ & $ 1- 1/\kappa^{2}_A(f)$

\medskip \\

IMEX 1 & $\frac{1}{L_f}I_m$ & $L_f (BB^\top)^{-1}$  & $ \left(1+ 1/\kappa(f)\right)^{-1}$

\medskip \\

&nonlinear solver & \multicolumn{2}{c}{${\rm prox}_{ f, \frac{\alpha_k}{L_f}}  (u_k - \frac{\alpha_k}{L_f} B^\top p_{k+1})$}

\medskip \\
IMEX 2 & $A^{-1}$ & $(BA^{-1}B^{\top})^{-1}$  & $ \left(1+ 1/\kappa_A(f)\right)^{-1}$

\medskip \\

&\multicolumn{3}{c}{nonlinear solver \quad $\min_{u \in \mV} f(u) + \frac{1}{2\alpha_k} \|u - u_k + \alpha_k\IV^{-1}B^\top p_{k+1}\|^2_{A}$}

\medskip \\

\correction{GS-AOR 1} & $\frac{1}{L_f}I_m$ & $L_f (BB^\top)^{-1}$  & $ \correction{\left(1+ 1/\kappa(f)\right)^{-1}}$

\medskip \\

\correction{GS-AOR 2} & $A^{-1}$ & $ (BA^{-1}B^\top)^{-1}$  & $ \correction{\left(1+ 1/\kappa_A(f)\right)^{-1}}$

\medskip \\
 
ALM-Explicit 1 & $(L_f I_m + \beta B^{\top}B)^{-1}$ & $\beta I_n$ &  $1- 1/\kappa^2(f)$
\medskip \\

ALM-Explicit 2 & $(A + \beta B^{\top}B)^{-1}$ & $\beta I_n$ & $1- 1/\kappa^2_A(f)$
\medskip \\




\correction{ALM-GS-AOR 1} & $(L_f I_m + \beta B^{\top}B)^{-1}$ & $\beta I_n$ &  $ \correction{\left(1+ 1/\kappa(f_B)\right)^{-1}}$

\medskip \\

\correction{ALM-GS-AOR 2} & $(A + \beta B^{\top}B)^{-1}$ & $\beta I_n$ & $ \correction{\left(1+ 1/\kappa_A(f_B)\right)^{-1}}$
\medskip \\
\bottomrule
	\end{tabular}
	\label{table:ALM examples}
\end{table}

Our analysis on ALM shows that the condition number of $f$ and Schur \correction{complement} can be simultaneously improved with a modified linear solver $(A+\beta B^\top B)^{-1}$ or a modified inner problem for $f_{\beta}$. Compared with  schemes without ALM, update of the dual variable in ALM is simpler and more importantly the stability is enhanced from the symmetrized transformed primal-dual flow point of view. 

\section{Conclusion and Future Work}

By revealing `Schur complement' in the transformed primal-dual flow, we proposed first-order algorithms, the Transformed Primal-Dual (TPD) iterations, and achieve linear convergence rates without the strong convexity of function $f$ or $g$. From a perspective of change of variables, the convergence rate in our analysis is essentially determined by choices of inner products on the primal and dual spaces. The augmented Lagrangian methods can enhance the stability and preconditioning the Schur complement so that the scaled identity defines a suitable inner product in the dual space. We also derive an approach to analyze the inexact inner solvers with perturbation on the gradient norm of a modified objective function for the sub-problem. \correction{More importantly, we propose a Gauss-Seidel iteration with accelerated overrelaxation (GS-AOR) to the TPD flow to obtain accelerated linear rate $(1+c/\kappa)^{-1}$.}

\correction{For the strongly-convex-strongly-concave nonlinear saddle point system, the optimal lower bound rate $(1+c/\sqrt{\kappa})^{-1}$ for first-order methods  is recently proved in~\cite{zhang2022lower}. We shall develop accelerated primal-dual methods to reach this rate and extend to convex-concave saddle point problems by combing the TPD flow.} 

Multigrid methods have been developed for linear saddle point systems ~\cite{bacuta2006unified,chen2018multigrid} and convex optimization problems~\cite{chen2020convergence}, showing convergence independent of problem sizes. One of our future work will be deriving multigrid-like methods for nonlinear saddle point systems. The TPD iterations can be used as good smoothers. Furthermore, we will extend this framework to tackle more general nonlinear saddle point systems, such as non-smooth objective function $f$, variables $(u,p)$ restricted in convex sets. For multi-block problems, the TPD flow will connect to the alternating direction method of multipliers (ADMM) ~\cite{boyd2011distributed,deng2016global} and there relation deserves further investigation.

\bibliographystyle{abbrv}
\bibliography{references,SaddleProblem}

\end{document}